\newtheorem{thm}{Theorem}[section]
\newtheorem{cor}[thm]{Corollary}
\newtheorem{lem}[thm]{Lemma}
\newtheorem{prop}[thm]{Proposition}
\newtheorem{defn}{Definition}
\theoremstyle{remark}
\newtheorem*{rmk}{Remark}
\newtheorem{example}{Example}
\newtheorem*{ex-cont}{Example}
\def\Z{{\mathbb Z}}
\def\Zhat{\hat{\Z}}
\def\Q{{\mathbb Q}}
\def\R{{\mathbb R}}
\def\A{{\mathbb A}}
\def\F{{\mathbb F}}
\def\CC{{\mathcal C}}
\def\E{{\mathcal E}}
\def\T{{\mathcal T}}
\newcommand\TT[3]{{\mathcal T}_{#1,#2,#3}}
\def\W{{\mathcal W}}
\def\Wodd{\W_{\text{odd}}} \def\Weven{\W_{\text{even}}}
\def\U{{\mathcal U}}
\def\B{{\mathcal B}}
\def\a{\bm{a}}
\def\x{\bm{x}}
\def\abar{\a(p^{6})}
\def\Ea{E_{\a}}
\DeclareMathOperator{\disc}{disc}
\DeclareMathOperator{\height}{ht}
\def\ordp{v}
\DeclareMathOperator{\GL}{GL}
\newcommand{\I}{\mathrm{I}}
\newcommand{\II}{\mathrm{II}}
\newcommand{\III}{\mathrm{III}}
\newcommand{\IV}{\mathrm{IV}}
\def\<#1>{\left<#1\right>}
\newcommand{\veq}[1]{=\kern-2pt#1}
\newcommand{\uk}{\mathbf{k}}
\newcommand{\Sage}{{\tt Sage}}
\newcommand{\lmfdbec}[3]{\href{http://www.lmfdb.org/EllipticCurve/Q/#1#2#3}{{\text{\rm#1#2#3}}}}
\begin{document}
\title[Densities for Weierstrass models of elliptic curves]{Local and global densities for Weierstrass models of elliptic curves}
\author{J. E. Cremona}
\address{Mathematics Institute, University of Warwick, Coventry CV4 7AL, UK}
\email{j.e.cremona@warwick.ac.uk}
\author{M. Sadek}
\address{Faculty of Engineering and Natural Sciences,
 Sabanc{\i} University,
  Tuzla, \.{I}stanbul, 34956 Turkey}
\email{mmsadek@sabanciuniv.edu}
\date{\today}
\dedicatory{In memory of John Tate, 1925--2019}

\begin{abstract}
We prove local results on the $p$-adic density of elliptic curves over
$\Q_p$ with different reduction types, together with global results on
densities of elliptic curves over $\Q$ with specified reduction types
at one or more (including infinitely many) primes.  These global
results include: the density of integral Weierstrass equations which
are minimal models of semistable elliptic curves over~$\Q$ (that is,
elliptic curves with square-free conductor) is
$1/\zeta(2)\approx60.79\%$, the same as the density of square-free
integers; the density of semistable elliptic curves over~$\Q$ is
$\zeta(10)/\zeta(2)\approx60.85\%$; the density of integral
Weierstrass equations which have square-free discriminant is
$\prod_p\left(1-\frac{2}{p^2}+\frac{1}{p^3}\right) \approx 42.89\%$,
which is the same (except for a different factor at the prime~$2$) as
the density of monic integral cubic polynomials with square-free
discriminant (and agrees with a 2013 result of Baier and Browning
for short Weierstrass equations); and the
density of elliptic curves over~$\Q$ with square-free minimal
discriminant is
$\zeta(10)\prod_p\left(1-\frac{2}{p^2}+\frac{1}{p^3}\right)\approx42.93\%$.

The local results derive from a detailed analysis of Tate's Algorithm,
while the global ones are obtained through the use of the Ekedahl
Sieve, as developed by Poonen, Stoll, and Bhargava.
\end{abstract}

\maketitle


\section{Introduction}
\label{sec:intro}
In this paper we first study purely local results on the $p$-adic
density of elliptic curves over $\Q_p$ with different reduction types,
and then apply these, using a version of the Ekedahl Sieve, to
establish several global results on densities of elliptic curves over
$\Q$.

In the local setting, we use Tate's Algorithm \cite{tate1975algorithm}
to determine in Section~\ref{sec:localI} the
local density of Weierstrass equations having each possible reduction
type.  For example, the proportion of Weierstrass equations
over~$\Z_p$ which have good reduction (at~$p$) is $1-1/p$, those with
reduction of type~$\I_m$ (respectively~$\I_m^*$) have
density~$(p-1)^2/p^{m+2}$ (respectively~$(p-1)^2/p^{m+7}$), and the
density of elliptic curves over~$\Q_p$ which are semistable is
$(1-p^{-2})/(1-p^{-10})$.  See Propositions~\ref{prop:TAtable}
and~\ref{prop:typesIm} for details.  Here we distinguish between the
set of local integral Weierstrass equations with some property, and
the larger set of those which may not be minimal models but define
elliptic curves whose minimal model has the property.  For example,
the density of integral Weierstrass equations defining elliptic curves
with good reduction is $(1-p^{-1})/(1-p^{-10})$, which is greater than
the density $1-p^{-1}$ of equations which are themselves minimal
models of curves with good reduction, after allowing for non-minimal
models, as the local density of non-minimal Weierstrass equations is
$p^{-10}$.

We show that the local densities of minimal Weierstrass equations with
prime conductor and prime discriminant are, respectively, $(p-1)/p^2$
and $(p-1)^2/p^3$.

The local results mentioned so far all generalise immediately to any
$p$-adic field, replacing $p$ in each formula with the cardinality of
the residue field.

Further local results over~$\Q_p$ are obtained in
Section~\ref{sec:localII}, again by studying Tate's Algorithm in great
detail.  In Theorems~\ref{thm:fp-cp-count} and~\ref{thm:fp-all}, we
establish the densities of elliptic curves over~$\Q_p$ with each
possible conductor exponent and each possible Tamagawa number. (Note
that Tate's Algorithm in~\cite{tate1975algorithm} includes the
determination of both these quantities.) For example (see
Theorem~\ref{thm:fp-all}), among elliptic curves over~$\Q_3$ with
additive reduction the densities of the possible conductor
exponents~$f_3=2,3,4,5$ are in the ratio~$189:366:122:61$ or
approximately $25.6\%:49.6\%:16.5\%:8.3\%$.  Extending these results
to general extensions of~$\Q_p$ is not so straightforward, as the
analysis depends on the precise valuations of certain integers (such
as the coefficients of the discriminant of a long Weierstrass
equation).

In order to pass from local results to global statements, we make use
of a version of the Ekedahl Sieve from \cite{Ekedahl-sieve} as
developed by Poonen and Stoll in \cite{Poonen-Stoll} and further by
Bhargava in \cite{Bhargava-sieve}, by Bhargava, Shankar and Wang in
\cite{bhargava2016squarefree}, and elsewhere.  Provided that certain
conditions are met, it is often the case that global densities may be
expressed as a convergent infinite product (over all primes) of local
densities.  In order to be able to apply these methods with some
flexibility, we develop them systematically in
Section~\ref{sec:general-density}.

The global results, for elliptic curves over~$\Q$, follow
in~Section~\ref{sec:global}.  For a set~$S$ of Weierstrass equations
with integer coefficients~$\a=(a_1,a_2,a_3,a_4,a_6)\in\Z^5$, we define
the \emph{weighted density} of~$S$ to be
\begin{equation}\label{eqn:def-density-intro}
\rho^{\uk}(S) = \lim_{X\to\infty}\frac{\#\{\a\in S\mid |a_i|\le X^i\}}{\#\{\a\in \Z^5\mid |a_i|\le X^i\}},
\end{equation}
when this limit exists, where $\uk=(1,2,3,4,6)$.  (More general
weighted densities will be defined in subsection~\ref{subsec:globalI}
below.)  An alternative way of expressing density results is to define
the \emph{height} of a Weierstrass equation with integer
coefficients~$\a$ to be
\[
\height(\a) = \max_i|a_i|^{1/i},
\]
and then order such equations by height; then we may say that when
integral Weierstrass equations are ordered by height, the proportion
which lie in the set~$S$ is $\rho^{\uk}(S)$, whose definition may now be
written as
\[
\rho^{\uk}(S) = \lim_{X\to\infty}\frac{\#\{\a\in S\mid \height(\a)\le X\}}{\#\{\a\in \Z^5\mid \height(\a)\le X\}}.
\]
Each of our results will have two versions, depending on whether we
restrict to Weierstrass equations which are globally minimal, or
include all equations.  (Those with zero discriminant, which define
singular curves, may always be ignored as they form a set of measure zero.)

In general, the global density exists and equals the product of the
corresponding local densities, provided that the local condition
specified at all but finitely many primes is to have good or
multiplicative reduction.  We state here a summary of the results from
Section~\ref{sec:global}, which allow more flexibility in specifying
local conditions at any finite set of primes.

\begin{thm}\label{thm:main-global}
When ordered by height, the proportion of integral Weierstrass
equations with each of the following properties is as given:
\begin{itemize}
\item globally minimal: $1/\zeta(10)=93555/\pi^{10} \approx 99.9\%$;
\item minimal models of semistable elliptic curves:
  $1/\zeta(2)=6/\pi^2 \approx 60.8\%$;
\item minimal models of semistable elliptic curves with good reduction
  at all the primes in the finite set~$S$:  $\zeta(2)^{-1}\prod_{p\in
    S}\frac{p}{p+1}$;
\item minimal models of elliptic curves with square-free
  discriminant:
  $$\prod_{p}\left(1-\frac{2}{p^2}+\frac{1}{p^3}\right)
  \approx 42.9\%.$$
\end{itemize}
In each case, the proportion of integral Weierstrass equations which
are not necessarily minimal models of elliptic curves with the stated
property may be obtained by multiplying by~$\zeta(10)\approx 1.001$.
\end{thm}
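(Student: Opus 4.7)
The plan is to express each density as a convergent Euler product of local densities and apply the Ekedahl-sieve framework developed in Section~\ref{sec:general-density}. In each case the property is the conjunction of independent local conditions at each prime $p$; once the sieve hypotheses are verified, the global density is $\prod_p d_p$, where the local density $d_p$ is supplied by the analysis of Tate's Algorithm in Section~\ref{sec:localI}.

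For the first claim, global minimality is equivalent to local minimality at every prime, and the local density of minimal equations at $p$ equals $1-p^{-10}$; thus $\prod_p(1-p^{-10}) = 1/\zeta(10)$. For the second claim, semistability at $p$ means good or multiplicative reduction. Summing the density for good reduction ($1-1/p$, from Proposition~\ref{prop:TAtable}) with the geometric sum over $m\ge 1$ of the densities $(p-1)^2/p^{m+2}$ for types~$\I_m$ (from Proposition~\ref{prop:typesIm}) gives local density $1-1/p^2$, whose Euler product is $1/\zeta(2)$. The third claim follows on strengthening the local condition at each $p\in S$ from \emph{semistable} to \emph{good reduction}: the local density changes from $(p^2-1)/p^2$ to $(p-1)/p$, introducing the extra factor $\prod_{p\in S} p/(p+1)$. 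For the fourth claim, a minimal equation has squarefree discriminant locally at $p$ iff its reduction is good ($v_p(\Delta)=0$) or of type~$\I_1$ ($v_p(\Delta)=1$), so the local density is $(1-1/p)+(p-1)^2/p^3 = 1-2/p^2+1/p^3$, and its Euler product is as stated.

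For the ``not necessarily minimal'' versions, I would use that the local density of all integral equations defining a curve whose minimal model has property $P$ equals the local density of minimal equations with $P$ divided by $1-p^{-10}$, as already observed in Section~\ref{sec:intro} for good reduction and for semistability; taking Euler products multiplies each of the preceding densities by $\prod_p(1-p^{-10})^{-1} = \zeta(10)$, accounting for the factor $\zeta(10)\approx 1.001$ in the final sentence of the theorem.

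The main obstacle is verifying the Ekedahl-sieve hypotheses in each case: one must show that the number of $\a$ with $\height(\a)\le X$ violating the relevant local condition at some prime $p>M$ is $o(X^{16})$ uniformly in the threshold~$M$, so that the tail of the Euler product can be controlled by letting $M\to\infty$ after $X\to\infty$. Each ``bad'' local set considered has density $O(p^{-2})$ (for non-minimality it is even $p^{-10}$), which sums to a convergent tail over primes; the general framework of Section~\ref{sec:general-density} is designed to package exactly what has to be checked to promote these pointwise local density bounds into the claimed global Euler product identities, so the work of this step is reduced to matching the hypotheses of those general results against the specific local sets arising from Tate's Algorithm.
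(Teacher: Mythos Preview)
Your local density computations and the overall Euler-product structure are correct, but there is a real gap at the admissibility step.  You assert that because each bad local set has measure $O(p^{-2})$ the tail is controlled and the framework of Section~\ref{sec:general-density} applies.  That inference is exactly what the paper warns against: Example~2 in Section~\ref{subsec:globalII} exhibits a family with $\mu(U_p)=0$ for every~$p$ which is not admissible, and the Remark following Proposition~\ref{prop:PS-Lemma21} stresses that one must distinguish bad sets cut out by two independent mod-$p$ conditions (codimension~$2$) from those cut out by a single mod-$p^2$ condition.  Convergence of $\sum_p\mu(U_p)$ is necessary but not sufficient.

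For the first three items the paper's actual mechanism is Lemma~\ref{lem:c4c6}: for $p\ge5$ the set of equations that are non-minimal or have additive reduction coincides with $\{c_4\equiv c_6\equiv0\pmod p\}$, and since $c_4,c_6$ are coprime polynomials in $\Z[a_1,\dots,a_6]$, Proposition~\ref{prop:PS-Lemma21} (the codimension-$2$ criterion, via Bhargava's geometric sieve) yields admissibility directly.  Lemma~\ref{lem:admiss} then lets one adjust $U_p$ at $p=2,3$ and pass to the subset needed for global minimality.  You have not identified this step, and without it the first three densities are not proved.

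For the square-free discriminant the paper needs more still.  The bad set splits as $U_p'\cup U_p''$ with $U_p'$ the additive/non-minimal part (handled as above) and $U_p''=\W_{\I_{\ge2}}^M$, which for $p\ge5$ is $\{p\nmid c_4,\ p^2\mid\Delta\}$, a genuine mod-$p^2$ condition to which Proposition~\ref{prop:PS-Lemma21} does not apply.  Admissibility of $\prod_pU_p''$ is established in the proof of Theorem~\ref{thm:square-free-disc} by a direct tail estimate exploiting that $\Delta$ is quadratic in~$a_6$ with discriminant~$c_4^3$ and is isobaric of weight~$12$; this is precisely where the specific weight vector $\uk=(1,2,3,4,6)$ matters, and the argument would break for equal weights.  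Your proposal does not engage with any of this, so the fourth item remains unproved.
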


It would be interesting to extend the global results here to number
fields other than~$\Q$, which would entail several additional
challenges.

\subsection{Related work}
Our result for the density of integral Weierstrass equations which
have square-free discriminant is---apart from a different local factor
at~$2$---the same as the density of monic integral cubic polynomials
with square-free discriminant: see the 2016 paper
\cite{bhargava2016squarefree} of Bhargava \textit{et al.}, and also
Theorem 6.8 in the 2007 paper~\cite{Ash2007} by~Ash, Brakenhoff, and Zarrabi.
We note that this is also in agreement with a result of Baier and
Browning in their 2013 paper~\cite{Baier-Browning} (see also Baier's
2016 paper~\cite{Baier-squarefree}) for short Weierstrass equations
$Y^2=X^3+AX+B$ with squarefree discriminant, established using quite
different methods.

In their famous 1990 paper~\cite{brumer1990behavior}, Brumer and
McGuinness give heuristics for the number of elliptic curves whose
minimal discriminant is less than~$X$, separating the cases of
positive and negative discriminant. In each case the number is
conjectured to be a constant multiple of~$X^{5/6}$ with a constant
which is the value of an elliptic integral divided (in each case)
by~$\zeta(10)$, the latter to allow for non-minimal discriminants.
This was revisited by Watkins in 2008 in~\cite{watkins2008some}, who
re-derives the same heuristic estimate, and also discusses the
factor~$\zeta(10)$.  Watkins also gives applications to the
distribution of curves by conductor instead of discriminant, and also
to the distribution of odd and even analytic ranks.

Some similar results, including local conditions, are given in the
2001 paper~\cite{Wong-densities} of Wong, who defines the height of an
elliptic curve over~$\Q$ to be
$\height_c(E)=\max\{|c_4(E)|^{1/4},|c_6(E)|^{1/6}\}$, where
$c_4(E),c_6(E)$ are the invariants of a minimal model for~$E$.  This
is comparable with our height: in one direction, standard formulae for
$c_4,c_6$ imply that, for $E$ defined by a minimal Weierstrass
equation with coefficients~$\a$, we have $\height_c(E)\ll\height(\a)$.
In the other direction, given a pair~$(c_4,c_6)$ which satisfy Kraus's
conditions from~\cite{Kraus-c4c6}, using the formulas in the first
author's book~\cite[p.~61]{JCbook2} to recover Weierstrass
coefficients~$\a$ from these, one obtains $\height(\a) \ll
\max\{|c_4(E)|^{1/4},|c_6(E)|^{1/6}\}$.  In Theorem~1
of~\cite{Wong-densities}, Wong gives asymptotic expansions of the
number of curves of height up to~$X$ together with the number which
are semistable, and the number which are semistable and have good
reduction at both~$2$ and~$3$.  In each case, the leading coefficient
gives the value of the density in our sense.  To compare these with
our results, we first need to take into account the density of
$(c_4,c_6)$ pairs which satisfy Kraus's conditions, which may easily
be seen to be $2^{-7}3^{-3}=1/3456$, and also the minimality condition
which leads to a factor of
$1/\zeta(10)=9355/\pi^{10}=3^5\cdot5\cdot7\cdot11/\pi^{10}$ as in our
theorem above.  The density given in~\cite{Wong-densities} is a
rational multiple of~$1/\pi^{10}$, but with a different rational
factor.  We would also expect, from our theorem above, that the
density for semistable curves should be multiplied by
$1/\zeta(2)=6/\pi^2$, and that if in addition we impose the condition
of having good reduction at~$2$ and~$3$, the density should be
multiplied by~$(2/3)(3/4)=1/2$, rather than $7/9$ as
in~\cite{Wong-densities}.  These discrepancies lead to Wong's
statement that the proportion of semistable curves is~$17.9\%$,
compared with our value of~$60.85\%$.  We should emphasize that the
majority of Wong's results in~\cite{Wong-densities} do not depend on
precise values of any densities, only that they exist and are
positive.

Over general number fields, not all elliptic curves have global
minimal Weierstrass equations when the class group is non-trivial.  In
her 2004~paper~\cite{bekyel2004density}, Bekyel determined the density
of elliptic curves defined over any number field~$K$ which have global
minimal models to be $\zeta_K(\CC_0,10)/\zeta_K(10)$, where
$\zeta_K(s)$ is the Dedekind zeta function of~$K$ and
$\zeta_K(\CC_0,s)$ is the partial zeta function associated to the
trivial ideal class.  Of course this equals~$1$ when the class group
is trivial. Note that once again the factor of~$\zeta_K(10)$ appears.

Earlier work of Papadopoulos in~\cite{papadopoulos} uses a close
analysis of Tate's Algorithm similar to our approach in
Sections~\ref{sec:localI} and~\ref{sec:localII}, working over a
general local field, but not including the quantification of the local
densities which we require.

This paper grew out of independent work of each of the authors:
unpublished notes on purely local densities (at arbitrary primes) by
Cremona, and a 2017~preprint \cite{sadek2017counting} on global
densities (excluding conditions at the primes~$p=2$ and~$p=3$) by
Sadek.  After the first version of the current paper appeared online,
we noticed a new preprint~\cite{ChoJeong2017counting} by Cho and
Jeong, whose subject matter has some overlap with the current paper,
but with several differences: conditions at the primes~$2$ and~$3$ are
excluded in~\cite{ChoJeong2017counting}, and only conditions at
finitely many primes are considered, through the use of short
Weierstrass equations.  On the other hand, they consider additional
local conditions we do not, including the condition of having a fixed
trace of Frobenius~$a_p$ at a prime~$p$ of good reduction, and their
paper also contains applications to the distribution of analytic
ranks.

\subsection*{Acknowledgements:} The authors would like to thank Manjul Bhargava,
Adam Harper, Bjorn Poonen, and Michael Stoll for useful suggestions,
and the anonymous referee, whose comments led to an improved
exposition, supported by computer code now available
at~\cite{sagecode}, of the details in Section~\ref{sec:localII}.
The first author was supported by EPSRC Programme Grant EP/K034383/1
\textit{LMF: L-Functions and Modular Forms}, the Horizon 2020 European
Research Infrastructures project \textit{OpenDreamKit} (\#676541), and
the Heilbronn Institute for Mathematical Research.


\section{Local densities I}\label{sec:localI}

\subsection{Weierstrass equations and coordinate transformations}

For any integral domain $R$ denote by
\[
\W(R) = R^5 = \{\a=(a_1,a_2,a_3,a_4,a_6)|a_i\in R\}
\]
the set of all $5$-tuples of coefficients in~$R$ of
plane cubic curves~$\Ea$ in long Weierstrass form over $R$:
\[
\Ea:\ Y^2+a_1XY+a_3Y=X^3+a_2X^2+a_4X+a_6.
\]
Denote by $\Delta(\a)$ the discriminant of~$\Ea$; when $\Delta(\a)$ is
non-zero, $\Ea$ is a model for an elliptic curve defined over the
fraction field of~$R$; otherwise, we say that $\a$ is \emph{singular}.
Below we will also refer to the standard associated quantities $b_2$,
$b_4$, $b_6$, $b_8$, $c_4$ and~$c_6$; together with~$\Delta$ these may
all be viewed as elements of~$\Z[a_1,a_2,a_3,a_4,a_6]$.


The \emph{translation group} $\T(R)=\{\tau(r,s,t)\mid r,s,t\in R\}$
acts on $\W(R)$ in the standard way, with $\tau(r,s,t)$ induced by the
coordinate substitutions $(X,Y)\mapsto(X+r,Y+sX+t)$; we call elements
of~$\T(R)$ \emph{translations}.

In the case $R=\Z_p$, we make further definitions of certain subsets
of $\W(\Z_p)$ and subgroups of $\T(\Z_p)$.  We denote by~$\ordp$ the
normalised $p$-adic valuation.

Given non-negative integers~$v_i$ for $i=1,2,3,4,6$, define
\[
 \W(v_1,v_2,v_3,v_4,v_6) = \{\a\in\W(\Z_p)\mid \ordp(a_i)\ge
 v_i\ \text{for~$i=1,2,3,4,6$}\}.
 \]
To specify further that $\ordp(a_i)=v_i$ exactly, we indicate this by
writing ``$\veq{v_i}$'': for example, $\W(1,1,1,1,\veq1)$.  Below we
will also need notation for subsets of these satisfying an additional
condition, for example $\W(1,1,1,2,2\mid\ordp(b_2)=2)$ and
$\W(1,1,1,2,2\mid\ordp(\Delta)=6)$, whose meaning should be clear.

For $e,f,g\ge0$ we define
\[
\TT{e}{f}{g} = \{\tau(r,s,t)\in\T(\Z_p) : p^e\mid r,\ p^f\mid s,\ p^g\mid t\},
\]
which is a subgroup of $\T(\Z_p)$ provided $e+f\ge g$, of index
$p^{e+f+g}$.

\subsection{Local densities and Tate's Algorithm}

For each non-singular $\a\in\W(\Z_p)$, the equation $\Ea$ defines an
elliptic curve over $\Q_p$.  With the usual $p$-adic measure~$\mu$ on
$\Z_p$ such that $\mu(\Z_p)=1$, we have $\mu(\W(\Z_p))=1$, and for any
measurable subset $S\subseteq\W(\Z_p)$ we refer to $\mu(S)$ as the
\emph{density} (or $p$-adic density) of the associated set of
equations~$\Ea$, and also think of $\mu(S)$ as the probability that a
random Weierstrass equation lies in $S$.  Note that the subset of
singular~$\a$ has measure zero, and may be tacitly ignored.

For example,
\begin{equation}
   \mu(\W(v_1,v_2,v_3,v_4,v_6))=1/p^{v_1+v_2+v_3+v_4+v_6}, \label{eqn:W-density1}
\end{equation}
while if any of the $v_i$ is replaced by $\veq{v_i}$, then the measure
should be multiplied by $(1-1/p)$; so for $i=6$,
\begin{equation}
   \mu(\W(v_1,v_2,v_3,v_4,\veq{v_6}))=(p-1)/p^{v_1+v_2+v_3+v_4+v_6+1},  \label{eqn:W-density2}
\end{equation}
and similarly for $i=1,2,3,4$.

We write $\T=\T(\Z_p)$ for the rest of this section.  The action of
$\T$ on $\W(\Z_p)$ is measure-preserving and also leaves the
discriminant, and $c_4$ and $c_6$, invariant.  Translations induce
isomorphisms of elliptic curves (when $\Delta\not=0$).

Given some property or \emph{type} $T$ of isomorphism classes of
elliptic curves over $\Q_p$, we associate a subset
$\W_T(\Z_p)\subseteq\W(\Z_p)$:
\[ \W_T(\Z_p) = \{\a\in\W(\Z_p) \mid \Ea\ \text{is smooth and has
  type}\ T\},
\]
and define the density of curves with property~$T$ as the $p$-adic
measure of this set.
\begin{defn}\label{def:rhoT} The local density $\rho_T(p)$ of
elliptic curves over $\Q_p$ with type $T$ is the $p$-adic measure
$\mu(\W_T(\Z_p))$ of the associated subset
$\W_T(\Z_p)\subseteq\W(\Z_p)$:
\[
\rho_T(p) = \mu(\W_T(\Z_p)).
\]
\end{defn}
In this section $p$ is fixed and we abbreviate: $\W_T=\W_T(\Z_p)$ and
$\rho_T=\rho_T(p)$.

The types of interest to us are the following Kodaira types of
reduction of elliptic curves over $\Q_p$:
\begin{itemize}
  \item $\I_0$ (good reduction);
  \item $\I_{\ge1}$ (bad multiplicative reduction, of type $\I_m$ for some
  $m\ge1$);
  \item bad additive reduction; with subtypes
     $\II$, $\III$, $\IV$, $\II^*$, $\III^*$, $\IV^*$, $\I_0^*$,
    $\I_{\ge1}^*$, the latter meaning type $\I_m^*$ for some $m\ge1$.
\end{itemize}
We call these types \emph{finite}, since, as we will see below (see
Proposition~\ref{prop:mod6}), we only need know the coefficients~$\a$
to finite $p$-adic precision in order to determine whether the
curve~$E_{\a}$ has each of these reduction types, provided that
$E_{\a}$ is a minimal model.  Moreover, the condition that $E_{\a}$ is
minimal also only depends on~$\a$ to finite precision (modulo~$p^6$
suffices).  Note that $\I_{\ge1}$ and $\I_{\ge1}^*$, the unions of
types $\I_m$ and $\I_m^*$ for all~$m\ge1$ respectively, are finite in
this sense.  However, while for each fixed~$m$ it is true that finite
$p$-adic precision suffices to detect the individual types~$\I_m$
and~$\I_m^*$, this precision depends on~$m$. For this reason we do not
regard these types as finite, and for some of our results it will be
necessary to consider these together, rather than individually.

Note that while each model $\Ea$ with $\Delta(\a)\not=0$ defines an
elliptic curve over $\Q_p$ whose type is well-defined, the set of
$\a\in\W_T(\Z_p)$ for which $\Ea$ is itself a minimal model is a
strictly smaller subset, with a smaller density, since scaling
(replacing each $a_i$ by $p^{ni}a_i$ for some $n\ge1$) does not change
the isomorphism class of $\Ea$.  We will relate these two densities.

Define
\[
\W_M = \{\a\in\W(\Z_p)\mid \Ea\ \text{is a minimal model}\},
\]
call $\a\in \W_M$ \emph{minimal}, and set $\W_N$ to be the
complement~$\W(\Z_p)\setminus \W_M$.  This complement contains the set
$\W(1,2,3,4,6)$ of all ``trivially non-minimal'' $\a$, satisfying
$p^i\mid a_i$ for $i=1,2,3,4,6$, which has measure $p^{-16}$.  It is
clear that the action of $\T$ preserves minimality, so $\T$ maps
both $\W_M$ and~$\W_N$ to themselves.
\begin{prop}\label{prop:trivmin}
\
  \begin{enumerate}
\item The subgroup of $\T$ preserving $\W(1,2,3,4,6)$ is $\TT213$.
\item Each orbit of $\T$ on $\W_N$ contains an  element of $\W(1,2,3,4,6)$.
\item $\mu(\W_M)=1-p^{-10}$.
\end{enumerate}
\end{prop}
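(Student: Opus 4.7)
The plan is to handle (1), (2), (3) in order, since (3) will follow from combining the first two with the index calculation $[\T : \TT213] = p^{2+1+3} = p^6$ and the measure formula~\eqref{eqn:W-density1}. For part~(1), using the standard formulas (from~\cite{tate1975algorithm}) for the action of $\tau(r,s,t)$ on~$\a$, the containment $\TT213 \subseteq \{\tau : \tau\cdot\W(1,2,3,4,6)\subseteq\W(1,2,3,4,6)\}$ is a term-by-term valuation check: when $\ordp(r)\ge 2$, $\ordp(s)\ge 1$, $\ordp(t)\ge 3$ and $\ordp(a_i)\ge i$, every term in the formula for the transformed $a_i'$ has valuation at least $i$. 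For the converse inclusion, it suffices to apply $\tau$ to $\a=\bm{0}$, producing $\tau\cdot\bm{0} = (2s,\,3r-s^2,\,2t,\,3r^2-2st,\,r^3-t^2)$, and to require that this lies in $p\Z_p\times p^2\Z_p\times\cdots\times p^6\Z_p$. The conditions on the first three coordinates yield the three valuation inequalities defining $\TT213$ directly when $p\ge 5$; when $p=3$ the fourth coordinate is needed to promote $\ordp(r)\ge 1$ to $\ordp(r)\ge 2$; when $p=2$ the fourth coordinate rules out the possibility $\ordp(s)=0$, and the fifth then upgrades $\ordp(t)\ge 2$ to $\ordp(t)\ge 3$.

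Part~(2) is the substantial step: given $\a\in\W_N$, I must exhibit $\tau\in\T$ with $\tau\cdot\a\in\W(1,2,3,4,6)$. This is the content of the non-minimality step of Tate's algorithm~\cite{tate1975algorithm}: once the algorithm detects that $\a$ is non-minimal, it prescribes an explicit translation in $\T(\Z_p)$ after which each coefficient becomes divisible by the appropriate power of~$p$. For $p\ge 5$ this is transparent: use translations to put $\Ea$ in short Weierstrass form $Y^2=X^3+AX+B$ (permissible because $2,3\in\Z_p^\times$), and then non-minimality reads off as $\ordp(A)\ge 4$ and $\ordp(B)\ge 6$, which is precisely the statement that the translated model lies in $\W(1,2,3,4,6)$. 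For $p=2,3$ the analogous claim follows from the finer case-by-case analysis of Tate's algorithm carried out in~\cite{papadopoulos}.

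For part~(3), combining (1) and~(2) shows that $\W_N$ is the union of the $p^6$ translates $\tau_i\cdot\W(1,2,3,4,6)$ as $\tau_i$ ranges over coset representatives of $\T/\TT213$. I then show these translates are pairwise disjoint: if $\tau\cdot\a=\a'$ with $\a,\a'\in\W(1,2,3,4,6)$ and $\tau\in\T$, then substituting the congruences $\ordp(a_i),\ordp(a_i')\ge i$ into the transformation formulas and running a valuation analysis parallel to part~(1) forces $\tau\in\TT213$ (at $p=2,3$ one bootstraps using the higher-index coordinates and, if needed, the $a_6$ equation, exactly as in the proof of~(1)). Since each translate has measure $\mu(\W(1,2,3,4,6))=p^{-16}$ by~\eqref{eqn:W-density1}, we obtain $\mu(\W_N)=p^6\cdot p^{-16}=p^{-10}$, and therefore $\mu(\W_M)=1-p^{-10}$. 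The main obstacle throughout is part~(2) at $p=2,3$, where both detecting non-minimality and producing the required integral translation demand genuine case analysis beyond the short-Weierstrass regime.
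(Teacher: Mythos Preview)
Your proposal is correct and follows essentially the same strategy as the paper: (1) via the transformation formulas, (2) by appeal to Tate's algorithm, and (3) by counting cosets and using the measure of $\W(1,2,3,4,6)$. One small difference worth noting: in (1) you test only at $\a=\bm{0}$, which is a pleasant shortcut, but this proves only that $\tau(\bm{0})\in\W(1,2,3,4,6)$ forces $\tau\in\TT213$; the disjointness needed in (3) requires the elementwise statement for \emph{arbitrary} $\a\in\W(1,2,3,4,6)$, which is why you end up redoing the valuation analysis there. The paper instead carries out (1) for general $\a$ from the start (so that the disjointness in (3) is immediate), but the net content is the same. Your exposition for $p=2$ in~(1) omits the step deducing $\ordp(r)\ge2$ (it comes from the second coordinate once $\ordp(s)\ge1$), but this is easily filled in.
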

\begin{proof}
(1) follows from the standard formulas linking the coefficients $\a$
  to the transformed coefficients~$\a'$ after translation by
  $\tau(r,s,t)\in\T$; in case $p\ge5$ this is almost trivial, and it
  is straightforward to check for $p=2$ and $p=3$.  See the proof of
  Theorem~\ref{thm:fp-cp-count} for details.

(2) follows directly from Tate's algorithm \cite{tate1975algorithm},
in which, given any non-minimal $\a$, one constructs a sequence of
translations taking $\a$ to some $\a'\in\W(1,2,3,4,6)$.

(3): from (2), since $\TT213$ has index~$p^6$ in $\T$, it
follows that $\W_N$ is partitioned into $p^6$ disjoint subsets, each a
translation of~$\W(1,2,3,4,6)$ by an element of one coset of
$\TT213$.  Since~$\mu(\W(1,2,3,4,6))=1/p^{16}$, it follows that
$\mu(\W_N)=p^6/{p^{16}}=1/p^{10}$ and hence $\mu(\W_M)=1-1/p^{10}$.
\end{proof}

For each type $T$, we set $\W_T^M = \W_T\cap \W_M$, the set of
$\a\in\W(\Z_p)$ for which $\Ea$ is a minimal model of an elliptic curve
of type~$T$, and make the following definition:
\begin{defn}\label{def:rhoTM}
The local density $\rho_T^M=\rho_T^M(p)$ of minimal Weierstrass
equations defining elliptic curves over $\Q_p$ of type $T$ is the
$p$-adic measure of $\W_T^M$:
\[
  \rho_T^M = \mu(\W_T^M) = \mu(\W_T\cap \W_M).
\]
\end{defn}

Although the properties we consider are invariants of elliptic curves
up to isomorphism over $\Q_p$, and not properties of specific models
or equations, we can still determine local densities by studying
Weierstrass models, by relating $\rho_T$ and $\rho_T^M$.  For example,
the model $\Ea$ will have bad reduction modulo~$p$ when
$\Delta(\a)\equiv0\pmod{p}$, but the curve over $\Q_p$ which this
model defines may still have good reduction if the model is
non-minimal.

Just as all non-minimal~$\a$ can be translated into the
set~$\W(1,2,3,4,6)$, which is defined by simple valuation conditions
on the coefficients, Tate's algorithm implies that, for each type~$T$,
there is a ``base set''~$\B_T$ also defined by valuation conditions,
such that
\[
\text{$\a$ is minimal and of type $T$}
\quad\iff\quad
\text{$\a$ has a translate in $\B_T$.}
\]
In the following proposition and table, we define such a
set~$\B_T\subseteq\W(\Z_p)$ for each finite type~$T$, and give its
measure and the subgroup~$\T_T\subseteq\T$ which stabilises it.  For
example, in the first line of the table for $T=\I_0$ (good reduction),
we have $\B_T=\W(0,0,0,0,0\mid\ordp(\Delta)=0)$, since the only
condition required for good reduction apart from integrality (all
coefficients have valuation~${}\ge0$) is that the discriminant has
valuation zero.  This condition is invariant under all translations,
so $\T_{\I_0}=\TT000=\T$.

\begin{prop}\label{prop:TAtable}
For each minimal~$\a\in\W(\Z_p)$ there exists $\tau\in\T$ such that
$\tau(\a)\in\B_T$ for exactly one of the base sets $\B_T$ in the
following table.  The table also shows the measure $\mu(\B_T)$, the
stabiliser $\T_T$ and its index, and the
measure~$\rho_T^M=\mu(\W_T^M)$.  The last row refers to the set of
non-minimal~$\a$, which has density~$1/p^{10}$, with base set the set of trivially
non-minimal~$\a$.  The discriminant of
the cubic $x^3+a_2x^2+a_4x+a_6$ is denoted\footnote{For $p\not=2$ we
  have $v(\tilde\Delta)=v(\Delta)$ for $\a\in\W(1,1,2,2,3)$, but this
  is not the case when $p=2$.} $\tilde\Delta$.
\[
\begin{tabular}{clcccc}
  \hline
  $T$ & $\B_T$ & $\mu(\B_T)$ & $\T_T$ & $[\T:\T_T]$ & $\rho_T^M$ \\
  \hline
  $\I_{0}$ & $\W(0,0,0,0,0\mid\ordp(\Delta)=0)$ & $(p-1)/p$ & $\TT000$ & $1$ & $(p-1)/p$ \\

  $\I_{\ge1}$ & $\W(0,0,1,1,1\mid\ordp(b_2)=0)$ & $(p-1)/p^4$ & $\TT101$
  & $p^2$ & $(p-1)/p^2$ \\

  $\II$ & $\W(1,1,1,1,\veq1)$ & $(p-1)/p^6$ & $\TT111$
  & $p^3$ & $(p-1)/p^3$ \\

  $\III$ & $\W(1,1,1,\veq1,2)$ & $(p-1)/p^7$ & $\TT111$
  & $p^3$ & $(p-1)/p^4$ \\

  $\IV$ & $\W(1,1,1,2,2\mid\ordp(b_6)=2)$ & $(p-1)/p^8$ & $\TT111$
  & $p^3$ & $(p-1)/p^5$ \\

  $\I_0^*$ & $\W(1,1,2,2,3\mid\ordp(\tilde\Delta)=6)$ & $(p-1)/p^{10}$ & $\TT112$
  & $p^4$ & $(p-1)/p^6$ \\

  $\I_{\ge1}^*$ & $\W(1,\veq1,2,3,4)$ & $(p-1)/p^{12}$ & $\TT212$
  & $p^5$ & $(p-1)/p^7$ \\

  $\IV^*$ & $\W(1,2,2,3,4\mid\ordp(b_6)=4)$ & $(p-1)/p^{13}$ & $\TT212$
  & $p^5$ & $(p-1)/p^8$ \\

  $\III^*$ & $\W(1,2,3,\veq3,5)$ & $(p-1)/p^{15}$ & $\TT213$
  & $p^6$ & $(p-1)/p^9$ \\

  $\II^*$ & $\W(1,2,3,4,\veq5)$ & $(p-1)/p^{16}$ & $\TT213$
  & $p^6$ & $(p-1)/p^{10}$ \\

  & $\W(1,2,3,4,6)$ & $1/p^{16}$ & $\TT213$
  & $p^6$ &  \\
  \hline
\end{tabular}
\]
\end{prop}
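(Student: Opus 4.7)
The plan is to handle each row of the table separately, using three ingredients: Tate's algorithm, which identifies the normal form $\B_T$; the density formulas \eqref{eqn:W-density1}--\eqref{eqn:W-density2}; and direct manipulation of the translation formulas expressing $\a'=\tau(r,s,t)\cdot\a$ as polynomials in $r,s,t$ and the $a_i$.

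For the existence statement I would simply repackage Tate's algorithm: for each Kodaira type $T$, the base set $\B_T$ records exactly the valuation constraints that hold at the successful termination step of the algorithm, so running the algorithm on any minimal $\a$ produces a $\tau\in\T$ with $\tau\cdot\a\in\B_T$ for the type $T$ equal to the reduction type of $E_{\a}$. Uniqueness of $T$ reflects the fact that the reduction type is an invariant of the isomorphism class of $E_{\a}$.

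The $\mu(\B_T)$ column would then be read off from \eqref{eqn:W-density1}--\eqref{eqn:W-density2}: each strict valuation condition $v(a_i)=v_i$ supplies a factor $(p-1)/p$ beyond the baseline $1/p^{v_1+v_2+v_3+v_4+v_6}$, while the auxiliary conditions involving $\Delta$, $b_2$, $b_6$ or $\tilde\Delta$ each amount, given the preceding valuation constraints, to a non-vanishing condition modulo $p$ on a polynomial in $\a$ which is uniformly distributed in $\F_p$ and so contributes a further $(p-1)/p$. The stabilizer $\T_T$ and its index are then read off from the transformation formulas by the inductive argument sketched for Proposition~\ref{prop:trivmin}(1): one determines which divisibility conditions on $r,s,t$ are forced by requiring that $\tau\cdot\a$ still satisfy the defining constraints of $\B_T$, each additional divisibility contributing a factor $p$ to the index.

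To finally obtain $\rho_T^M=[\T:\T_T]\cdot\mu(\B_T)$, I would write $\W_T^M=\bigcup_i\tau_i\cdot\B_T$ with $\{\tau_i\}$ a set of coset representatives of $\T_T$ in $\T$ --- the inclusion $\supseteq$ is clear since translation preserves both minimality and the reduction type, and $\subseteq$ is Tate's algorithm again --- and then verify essential disjointness: for $\tau\notin\T_T$, the set $\{\a\in\B_T : \tau\cdot\a\in\B_T\}$ has measure zero. This reduces to a polynomial-congruence condition whose only generic solution modulo $p$ forces $\tau\in\T_T$, and combined with the measure-preserving property of the $\T$-action it yields the desired product formula. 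The last row of the table is just a restatement of Proposition~\ref{prop:trivmin}(3). The hard part, I expect, will be the small-prime cases $p=2,3$: divisibility of the numerical coefficients appearing in the transformation formulas by $p$ itself spoils the clean leading-term analysis available for $p\ge 5$, so a careful case-by-case verification is needed to identify $\T_T$ and to confirm disjointness in every row.
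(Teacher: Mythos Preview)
Your proposal is correct and follows essentially the same strategy as the paper: identify each $\B_T$ with the exit condition of the corresponding step in Tate's Algorithm, compute $\mu(\B_T)$ from \eqref{eqn:W-density1}--\eqref{eqn:W-density2} together with the observation that each side condition contributes an extra factor $(p-1)/p$, determine the stabiliser $\T_T$ from the transformation formulas, and obtain $\rho_T^M=[\T:\T_T]\cdot\mu(\B_T)$ from the decomposition of $\W_T^M$ into coset translates of $\B_T$. The paper is terser---it defers the stabiliser verifications and the side-condition density computations to the detailed walk through Tate's Algorithm in the proof of Theorem~\ref{thm:fp-cp-count} and to equations such as \eqref{eqn:B_I1}, \eqref{eqn:B_IV}, \eqref{eqn:B_IVs}---but the logical structure is the same, and your anticipation that $p=2,3$ require extra care is exactly right.
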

\begin{proof}
The conditions defining each basic set $\B_T$ in the table are
equivalent to the exit conditions in Tate's algorithm.  The density of
$\B_T$ is given by~\eqref{eqn:W-density1} or~\eqref{eqn:W-density2}
when there is no extra condition (such as $\ordp(b_6)=2$ for
type~$\IV$); the extra condition always has the effect of multiplying
the density by~$1-1/p$.  For type~$\I_0$ this
is Proposition~\ref{prop:good-density}, while for types~$\I_{\ge1}$, $\IV$,
and $\IV^*$ see \eqref{eqn:B_I1}, \eqref{eqn:B_IV},
and~\eqref{eqn:B_IVs} in Section~\ref{sec:localII} respectively.

The last column is the product of the index~$[\T:\T_T]$ and the
measure of $\B_T$, since the subset of~$\a$ of type $T$ is the
disjoint union of $[\T:\T_T]$ translates of $\B_T$.

The side conditions for types $I_{\ge1}$, $\IV$ and $\IV^*$ ensure
that a certain quadratic has distinct roots modulo~$p$, while that for
$I_0^*$ ensures that a certain cubic has distinct roots modulo~$p$.
In the algorithm, if the exit condition for types $\I_0$,
$\I_{\ge1}$, $\IV$, $\I_0^*$ and $\I_{\ge1}^*$ fails, a translation is
required before continuing, and hence the stabiliser becomes smaller,
by index~$p$ except in the first step when the index is~$p^2$.

Tate's Algorithm itself takes an arbitrary $\a\in\W(\Z_p)$ and applies
to it a sequence of translations, each well-defined up to an element
in the next stabiliser, until it has been transformed into one of the
base sets~$\B_T$, at which point one concludes that the reduction type
is~$T$, or that the equation was not minimal.

Additional detail will be given in the proof of
Theorem~\ref{thm:fp-cp-count} below.
\end{proof}

The next proposition implies that for each of the finite\footnote{Recall
  that we do not consider the individual types $\I_m$ and $\I_m^*$ as
  finite.}  types~$T$, the condition that $\a\in \W_T^M$ only depends
on the class of $(a_i\pmod{p^6})$ in $(\Z_p/p^6\Z_p)^5$.  We denote
this product by $\W(\Z_p/p^6)$ and the image of~$\a$ in $\W(\Z_p/p^6)$
by $\abar$; there are $p^{30}$ classes in $\W(\Z_p/p^6)$, each of
measure $1/p^{30}$.  Similarly for $\W_N$.

\begin{prop}\label{prop:mod6}
Let $\a,\a'\in\W(\Z_p)$ be such that $\abar=\a'(p^{6})$. Then
\[
\a\in \W_M \iff \a' \in \W_M,
\]
and for each finite type~$T$ we have
\[
\a\in \W_T^M\iff \a' \in \W_T^M.
\]
\end{prop}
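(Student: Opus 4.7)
The plan is to show that both ``$\a\in\W_M$'' and ``$\a\in\W_T^M$'' depend only on the class $\abar\in\W(\Z_p/p^6)$, by combining two separate observations that make Tate's algorithm visible as a finite computation modulo~$p^6$.

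First, I will note that translations preserve coordinate-wise congruence modulo~$p^6$. The coordinates of $\tau(r,s,t)(\a)$ are polynomials with integer coefficients in $r,s,t,a_1,\ldots,a_6$, so whenever $\a\equiv\a'\pmod{p^6}$ in every coordinate, also $\tau(\a)\equiv\tau(\a')\pmod{p^6}$ in every coordinate, for every $\tau\in\T$.

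Second, I will verify that each base set $\B_T$ listed in Proposition~\ref{prop:TAtable}, as well as the trivially non-minimal set $\W(1,2,3,4,6)$, is a union of residue classes of $\W(\Z_p/p^6)$. The defining inequalities $\ordp(a_i)\ge k$ all have thresholds $k\le 6$, and the defining equalities $\ordp(a_i)=k$ have thresholds $k\le 5$, so these are obviously witnessed by $\abar$. The remaining side conditions $\ordp(b_2)=0$, $\ordp(b_6)\in\{2,4\}$, and $\ordp(\tilde\Delta)=6$ require a short valuation bookkeeping: inside the ambient set I plan to expand $b_2(\a+p^6\bm{u})-b_2(\a)$, and analogously for $b_6$ and $\tilde\Delta$, as a sum of monomials, and use the lower bounds on the $\ordp(a_i)$ coming from the ambient base set to show each cross term has valuation strictly larger than the threshold being tested. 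For type~$\I_0^*$, for example, one checks that every monomial of $\tilde\Delta\in\Z[a_2,a_4,a_6]$ has valuation $\ge 6$ on $\W(1,1,2,2,3)$, whence differentiating once and multiplying by a factor of $p^6$ yields a change of valuation $\ge 7$; hence $\tilde\Delta(\a)\equiv\tilde\Delta(\a')\pmod{p^7}$, and the condition $\ordp(\tilde\Delta)=6$ is witnessed by a single residue class mod~$p$.

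With both ingredients in hand the conclusion is immediate. By Proposition~\ref{prop:trivmin}(2), $\a\in\W_N$ iff the $\T$-orbit of $\a$ meets $\W(1,2,3,4,6)$; by Proposition~\ref{prop:TAtable}, $\a\in\W_T^M$ iff this orbit meets $\B_T$. Both statements are witnessed purely by $\abar$, so they hold for $\a$ exactly when they hold for $\a'$. The only point of care will be the side-condition cases for types $\IV^*$ and $\I_0^*$, where the testing thresholds ($4$ and $6$ respectively) come closest to the allowed precision loss of~$6$; both bounds are nevertheless comfortable, since the perturbation $p^6\bm{u}$ supplies a full six units of valuation that the ambient coefficient bounds more than suffice to carry past the threshold.
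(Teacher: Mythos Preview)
Your proof is correct and follows essentially the same approach as the paper: both argue that translations preserve congruence modulo~$p^6$ and that membership in each base set~$\B_T$ (and in $\W(1,2,3,4,6)$) is determined by~$\abar$, so that the outcome of Tate's algorithm depends only on~$\abar$. Your version is phrased structurally via the orbit characterisation, while the paper phrases it procedurally by following the algorithm step by step, but the content is the same; in fact you are slightly more explicit than the paper about the one delicate case, namely the side condition $\ordp(\tilde\Delta)=6$ for type~$\I_0^*$, where mere congruence of $\tilde\Delta$ modulo~$p^6$ would not suffice and the ambient bounds from $\W(1,1,2,2,3)$ are needed to push the perturbation to valuation~$\ge7$.
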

\begin{proof}
This again follows from Tate's Algorithm. At each step the exit
criterion is a test for membership of one of the basis sets $\B_T$,
which only depends on $\abar$.  Also, whenever a coordinate
transformation $\tau(r,s,t)$ is required, in each case it is taken
from the finite set of cosets of one of the subgroups $\TT{e}{f}{g}$.
It is clear that the action of $\T$ is well-defined on
$\W(\Z_p/p^6)$, in the sense that for each $\tau\in\T$,
$\abar=\a'(p^{6})$ implies $\tau(\a)(p^6)=\tau(\a')(p^{6})$.

It follows that the outcome of the algorithm (up to the point of
determining that the initial~$\a$ was non-minimal, and excluding the
exact index~$m$ for types $\I_m$ and $\I_m^*$) also only depends on
the initial value of $\abar$.
\end{proof}

\begin{cor}\label{cor:NT}
For each finite type $T$,
\[
\rho_T^M = N(T)/p^{10}
\]
where $N(T)=p^k-p^{k-1}$ for some integer~$k$ with $1\le k\le10$,
depending on the type $T$, such that
\[
   \#\{\a\in \W_T^M\mid 0\le a_i<p^6\ \text{for $i=1,2,3,4,6$}\} = p^{20}N(T).
\]
\end{cor}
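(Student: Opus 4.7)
The plan is to recognize this corollary as a direct bookkeeping reformulation of Propositions~\ref{prop:TAtable} and~\ref{prop:mod6}; no new geometric input is needed.

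First, I would invoke Proposition~\ref{prop:mod6} to pass from the $p$-adic measure to a count modulo $p^6$. Since membership of $\a$ in $\W_T^M$ depends only on $\abar \in \W(\Z_p/p^6)$, the set $\W_T^M$ is a disjoint union of cylinder sets $\{\a \in \W(\Z_p) : \a \equiv \bar\a \pmod{p^6}\}$, one for each qualifying residue class $\bar\a \in (\Z/p^6\Z)^5$, and each such cylinder has measure $p^{-30}$. Writing $n_T$ for the number of such residue classes, we therefore have $\rho_T^M = n_T/p^{30}$, and $n_T$ is precisely the cardinality appearing on the left-hand side of the displayed equation in the statement.

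Next, I would read off from Proposition~\ref{prop:TAtable} that $\rho_T^M = (p-1)/p^j$ for some integer $j \in \{1, 2, \ldots, 10\}$ determined by the type $T$: namely $j = 1, 2, 3, 4, 5, 6, 7, 8, 9, 10$ for types $\I_0, \I_{\ge 1}, \II, \III, \IV, \I_0^*, \I_{\ge 1}^*, \IV^*, \III^*, \II^*$ respectively. Setting $k = 11 - j$ (so $k$ also ranges over $\{1,\dots,10\}$) and $N(T) = p^{k-1}(p-1) = p^k - p^{k-1}$, we get
\[
\rho_T^M \;=\; \frac{p-1}{p^{11-k}} \;=\; \frac{p^k - p^{k-1}}{p^{10}} \;=\; \frac{N(T)}{p^{10}},
\]
and combining with the first paragraph yields $n_T = p^{30} \rho_T^M = p^{20} N(T)$, as required.

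Since everything reduces to the previously established table and the mod-$p^6$ stabilization, there is no substantive obstacle; the only thing to verify is that every entry in the $\rho_T^M$ column of Proposition~\ref{prop:TAtable} has the form $(p-1)/p^j$ with $j \in \{1,\dots,10\}$, which is immediate by inspection.
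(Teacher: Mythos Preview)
Your proposal is correct and is essentially the paper's approach, just spelled out in more detail: the paper's proof consists of the single line ``This follows immediately from the table above,'' referring to Proposition~\ref{prop:TAtable}. Your explicit use of Proposition~\ref{prop:mod6} to justify the passage from $p$-adic measure to a count of residue classes modulo~$p^6$, and your computation $k=11-j$, simply make precise what the paper leaves implicit.
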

\begin{proof}
This follows immediately from the table above.
\end{proof}

The precise index $m$ for types $\I_m$ and $\I_m^*$ when $m\ge1$
depends on the discriminant valuation which can be arbitrarily large,
so no fixed $p$-adic precision will suffice to determine this value in
all cases.  However, for later reference we can determine the
densities of these types for each~$m$:
\begin{prop}\label{prop:typesIm}
For each $m\ge1$ we have $\rho_{I_m}^M=(p-1)^2/p^{m+2}$ and
$\rho_{I_m^*}^M=(p-1)^2/p^{m+7}$.
\end{prop}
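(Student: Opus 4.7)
The plan is to refine Proposition~\ref{prop:TAtable} by stratifying its base sets $\B_{\I_{\ge1}}$ and $\B_{\I_{\ge1}^*}$ according to the value of~$\ordp(\Delta)$, which by Tate's algorithm equals~$m$ for type~$\I_m$ and~$m+6$ for type~$\I_m^*$. Since~$\T$ acts by measure-preserving maps that fix~$\Delta$, Proposition~\ref{prop:TAtable} gives
\[
  \rho_{\I_m}^M = p^2\cdot\mu\!\left(\B_{\I_{\ge1}}\cap\{\ordp(\Delta)=m\}\right),\qquad \rho_{\I_m^*}^M = p^5\cdot\mu\!\left(\B_{\I_{\ge1}^*}\cap\{\ordp(\Delta)=m+6\}\right),
\]
so the task reduces to computing the conditional distribution of~$\ordp(\Delta)$ on each base set.

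For type~$\I_m$, the main tool is the standard Weierstrass identity $\partial\Delta/\partial a_6=c_6$. On $\B_{\I_{\ge1}}$ the reduction is multiplicative, so $\ordp(c_4)=\ordp(c_6)=0$. Fix $(a_1,a_2,a_3,a_4)$ satisfying the conditions defining~$\B_{\I_{\ge1}}$ and view $\phi(a_6)=\Delta(\a)$ as a polynomial of degree two in~$a_6$; its linear coefficient is a unit, and a direct valuation check in $\Delta=-b_2^2b_8-8b_4^3-27b_6^2+9b_2b_4b_6$ shows $\phi(0)\in p^2\Z_p\subseteq p\Z_p$. Hensel's lemma then makes~$\phi$ a measure-preserving bijection $p\Z_p\to p\Z_p$, giving
\[
\mu\{a_6\in p\Z_p : \ordp(\Delta)\ge m\} = p^{-m},\qquad \mu\{a_6\in p\Z_p : \ordp(\Delta)=m\} = (p-1)/p^{m+1}.
\]
Integrating over $(a_1,a_2,a_3,a_4)$, whose conditions have joint measure $(p-1)/p\cdot 1/p^2=(p-1)/p^3$, and multiplying by the coset index~$p^2$ yields $\rho_{\I_m}^M=(p-1)^2/p^{m+2}$.

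For type~$\I_m^*$ the structure is analogous within $\B_{\I_{\ge1}^*}=\W(1,\veq{1},2,3,4)$. Here $\ordp(c_6)=3$ generically and $\ordp(a_6)\ge4$, so~$a_6\mapsto\Delta$ has derivative of constant valuation~$3$. After checking $\phi(0)\in p^7\Z_p$, a scaled Hensel argument identifies the image of~$p^4\Z_p$ with~$p^7\Z_p$ and gives $\mu\{a_6:\ordp(\Delta)\ge m+6\}=p^{-(m+3)}$ for $m\ge1$. Combined with the joint measure $(p-1)/p^8$ of the $(a_1,\dots,a_4)$ conditions and the coset index~$p^5$, this produces $\rho_{\I_m^*}^M=(p-1)^2/p^{m+7}$. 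The main obstacle is at the primes $p=2$ and $p=3$, where the valuations of~$b_2$ and~$c_6$ on $\B_{\I_{\ge1}^*}$ exceed their generic values and the clean one-variable integration over~$a_6$ no longer sweeps out the geometric distribution directly; the remedy is to decompose $\B_{\I_{\ge1}^*}$ according to $\ordp(a_1)$ and $\ordp(a_3)$, and on each cell to use the expansion $4\Delta=b_2^2b_4^2-b_2^3b_6-32b_4^3-108b_6^2+36b_2b_4b_6$ to locate a coordinate along which $\Delta$ has a suitable derivative, with the contributions summing to the same uniform formula.
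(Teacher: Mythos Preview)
Your argument for type~$\I_m$ is correct and rather nice: on $\B_{\I_{\ge1}}$ one has $\ordp(c_6)=0$ for every prime~$p$ (since $c_6\equiv -b_2^3\pmod{p}$ there), so $a_6\mapsto\Delta(\a)$ is an isometry of $p\Z_p$ onto itself, and the distribution of $\ordp(\Delta)$ drops out immediately.  This is a different route from the paper, which instead quotes a structural lemma about multiplicative models to produce a translation after which $\ordp(\Delta)=m\iff\ordp(a_6)=m$; your Hensel/isometry argument is more self-contained.

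For type~$\I_m^*$, however, there is a genuine gap at $p=2$, and it is not the one you identify.  Your reduction
\[
  \rho_{\I_m^*}^M = p^5\cdot\mu\bigl(\B_{\I_{\ge1}^*}\cap\{\ordp(\Delta)=m+6\}\bigr)
\]
rests on the claim that for $\a\in\B_{\I_{\ge1}^*}$ the Kodaira type is $\I_m^*$ if and only if $\ordp(\Delta)=m+6$.  This is true for $p\ge3$ (indeed your isometry argument works cleanly for $p=3$ as well, since $\ordp(c_6)=3$ exactly on the base set there), but it is \emph{false} for $p=2$: the conductor exponent for type~$\I_m^*$ at $p=2$ is not constant, so $\ordp(\Delta)=m+4+f_2$ varies with $m$ fixed, and conversely a fixed discriminant valuation (e.g.\ $\ordp(\Delta)=12$) can occur for several distinct values of~$m$.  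Thus stratifying $\B_{\I_{\ge1}^*}$ by $\ordp(\Delta)$ does not separate the types $\I_m^*$ at all when $p=2$, and no amount of refining the one-variable integration over~$a_6$ (or any other coordinate) will repair this, because the quantity you are computing is simply not $\rho_{\I_m^*}^M$.

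The paper's proof avoids this by following Tate's algorithm directly: within the $\I_{\ge1}^*$ loop, passing from $\I_m^*$ to $\I_{m+1}^*$ is governed by whether a certain monic quadratic over~$\F_p$ has a repeated root, an event of relative probability~$1/p$ regardless of~$p$.  That recursive structure, not the value of $\ordp(\Delta)$, is what gives the uniform factor of~$1/p$ and hence $\rho_{\I_m^*}^M=(p-1)^2/p^{m+7}$ for all primes including~$p=2$.
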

\begin{proof}
Consideration of Tate curves shows that $\rho_{I_m}^M =
p\cdot\rho_{I_{m+1}}^M$.  Explicitly,
in~\cite[\S2.2]{JC-ComponentGroups}, the first author proved that when
$p^m\mid\Delta$, there is a translation of the form~$\tau(r,0,t)$ to a
Weierstrass model such that $p^m$ divides all
of~$a_3,a_4,a_6,b_4,b_6$, and~$b_8$.  For such a model we have
$\ordp(\Delta)=m \iff \ordp(b_8)=m \iff \ordp(a_6)=m$.  Hence the
relative density of models of type~$\I_{\ge m+1}$ within those of
type~$\I_{\ge m}$ is~$1/p$.  Since
$\sum_{m\ge1}\rho_{I_m}^M=\rho_{I_{\ge1}}^M=(p-1)/p^2$ (see
Proposition~\ref{prop:TAtable}), the first result follows.

For the second result, a careful analysis of Tate's algorithm (see the
proof of Theorem~\ref{thm:fp-cp-count} below) again shows that the
density is reduced by a factor of~$p$ when $m$ increases by~$1$, since
the criterion for increasing~$m$ is that a certain monic quadratic has
a repeated root modulo~$p$, which has probability $1/p$.
\end{proof}

The preceding proof shows that
\[
\B_{\I_m} = \W(0, 0, m, m, \veq\,{m} \mid \ordp(b_2)=0),
\]
with measure~$(p-1)^2/p^{3m+2}$ and stabiliser of index~$p^{2m}$.
In Section~\ref{sec:localII} we will show that
\[
   \B_{\I_m^*} = \begin{cases}
\W(1,\veq1,k+1,k+2,2k+2\mid\ordp(b_6)=2k+2) & \text{if $m=2k-1$;}\\
\W(1,\veq1,k+2,k+2,2k+3\mid\ordp(b_8)=2k+4) & \text{if $m=2k$;}
   \end{cases}
\]
with measure $(p-1)^2/p^{2m+11}$ and stabiliser of index~$p^{m+4}$.

Hence we have an explicit upper bound on the $p$-adic precision to
which we must know $\a\in\W(\Z_p)$ in order to determine the type of
$\Ea$, provided that $\a$ is minimal, for all finite types.  This is
false without the minimality condition---that is, we cannot replace
the subsets $\W_T^M$ by $\W_T$ in Proposition~\ref{prop:mod6}---since
scaling (replacing each $a_i$ by $p^{ni}a_i$ for some $n\ge1$) does
not change the isomorphism class of $\Ea$.  Later we will consider
$\a$ to higher $p$-adic precision in order to handle non-minimal
models.  On the other hand, for most finite types, lower $p$-adic
precision than $\abar$ is required: for example, to distinguish
between good reduction, multiplicative reduction and additive
reduction of a minimal model only requires knowledge of $\a\pmod{p}$.
However the individual finite types of additive reduction require
successively higher precision, as does the condition of minimality
itself, and to treat all finite types uniformly it is more convenient
to work modulo~$p^6$.

The two densities $\rho_T$ and $\rho_T^M$ are related as follows.
\begin{prop}\label{prop:rhoTM}
For each finite type $T$,
\[
    \rho_T = \frac{p^{10}}{p^{10}-1}\rho_T^M.
\]
\end{prop}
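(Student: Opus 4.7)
The plan is to decompose $\W_T$ into its minimal part $\W_T^M$ and its non-minimal part $\W_T \cap \W_N$, compute the measure of the latter in terms of $\rho_T$ itself, and then solve the resulting linear equation for $\rho_T$.

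For the non-minimal part, I would use Proposition~\ref{prop:trivmin} together with a scaling bijection. By parts (1) and (2) of that proposition, every $\a \in \W_N$ has a $\T$-translate in $\W(1,2,3,4,6)$, and the stabiliser of $\W(1,2,3,4,6)$ is~$\TT213$; the detailed case analysis in the proof of (1) in fact shows the stronger statement that for any fixed $\a \in \W(1,2,3,4,6)$, the set of $\tau \in \T$ with $\tau(\a) \in \W(1,2,3,4,6)$ is exactly the coset~$\TT213$. Thus $\W_N$ is partitioned as the disjoint union $\bigsqcup_{\tau \in \T/\TT213} \tau(\W(1,2,3,4,6))$; intersecting with the $\T$-invariant set $\W_T$ yields
\[
\W_T \cap \W_N \;=\; \bigsqcup_{\tau \in \T/\TT213} \tau\bigl(\W_T \cap \W(1,2,3,4,6)\bigr).
\]

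Next I would identify $\W_T \cap \W(1,2,3,4,6)$ via the scaling map $\sigma \colon \W(\Z_p) \to \W(1,2,3,4,6)$ given by $\sigma(\a) = (pa_1, p^2 a_2, p^3 a_3, p^4 a_4, p^6 a_6)$. This map is a measure-scaling bijection with $\mu(\sigma(S)) = p^{-16}\mu(S)$, and it corresponds to the substitution $(X,Y) \mapsto (p^{-2}X, p^{-3}Y)$, so that $E_{\sigma(\a)}$ is $\Q_p$-isomorphic to~$E_\a$ and has the same Kodaira type. Hence $\sigma$ restricts to a bijection $\W_T \to \W_T \cap \W(1,2,3,4,6)$, giving $\mu(\W_T \cap \W(1,2,3,4,6)) = \rho_T/p^{16}$, and therefore
\[
\mu(\W_T \cap \W_N) = [\T:\TT213] \cdot \frac{\rho_T}{p^{16}} = p^6 \cdot \frac{\rho_T}{p^{16}} = \frac{\rho_T}{p^{10}}.
\]
Since $\W_T = \W_T^M \sqcup (\W_T \cap \W_N)$ is a disjoint union, we obtain $\rho_T = \rho_T^M + \rho_T/p^{10}$, which rearranges to the claimed formula. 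The only subtle point is the disjointness of the translates $\tau(\W(1,2,3,4,6))$ across distinct cosets of~$\TT213$, which I expect to be the main obstacle; this follows from the strengthened form of Proposition~\ref{prop:trivmin}(1) contained in (but not explicitly stated by) its proof, and one could alternatively verify it by a direct measure count: $[\T:\TT213]\cdot \mu(\W(1,2,3,4,6)) = p^6/p^{16} = 1/p^{10} = \mu(\W_N)$, forcing the union to be disjoint up to measure zero.
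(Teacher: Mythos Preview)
Your proof is correct and follows essentially the same route as the paper's: decompose $\W_T$ into its minimal and non-minimal parts, use the scaling bijection to get $\mu(\W_T\cap\W(1,2,3,4,6))=p^{-16}\rho_T$, multiply by $[\T:\TT213]=p^6$ to obtain $\mu(\W_T\cap\W_N)=p^{-10}\rho_T$, and solve. You supply more detail than the paper does on the disjointness of the coset translates (the paper simply cites the partition established in the proof of Proposition~\ref{prop:trivmin}(3)), but the argument is the same.
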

\begin{proof}
Recall from Proposition~\ref{prop:trivmin} that the set
$\W(1,2,3,4,6)$ of trivially non-minimal~$\a$ has measure~$p^{-16}$,
and the set $\W_N$ of all non-minimal~$\a$ is the union of $p^{6}$
translates of~$\W(1,2,3,4,6)$ under a set of translations $\tau$ which
are coset representatives for $\TT213$ in $\T$.

The scaling map
$(a_1,a_2,a_3,a_4,a_6)\mapsto(pa_1,p^2a_2,p^3a_3,p^4a_4,p^6a_6)$ is a
bijection from $\W_T$ to $\W_T\cap \W(1,2,3,4,6)$, so $\mu(\W_T\cap
\W(1,2,3,4,6))=p^{-16}\mu(\W_T)$.  Hence
\[
\mu(\W_T\cap \W_N) = p^{6}\mu(\W_T\cap \W(1,2,3,4,6)) = p^{-10}\mu(\W_T),
\]
so $\mu(\W_T\cap \W_M) = (1-p^{-10})\mu(\W_T)$ and hence $\rho_T^M =
(1-p^{-10})\rho_T$.
\end{proof}

Writing this relation as $\rho_T=\rho_T^M\sum_{k=0}^{\infty}p^{-10k}$,
we now give an interpretation of each term of the series in terms of
the ``level of non-minimality'' for~$\a\in\W(\Z_p)$, which we now
define.
\begin{defn}\label{def:lambda}
Let $\a\in\W(\Z_p)$ with $\Delta(\a)\not=0$.  The \emph{level}
$\lambda(\a)$ of~$\a$ is defined by
\[
 \lambda(\a) = \frac{1}{12}\left(\ordp(\Delta(\a))-\ordp(\Delta_{\min}(\Ea))\right),
\]
where $\Delta_{\min}(\Ea)$ is the discriminant of a minimal model for
the elliptic curve $\Ea$.
\end{defn}
With this definition, $\a$ is minimal if and only if $\lambda(\a)=0$,
and $\W(\Z_p)$ is the disjoint union of ``level sets''
$\W_k=\{\a\in\W(\Z_p)\mid\lambda(\a)=k\}$, together with the set of
singular~$\a$.  Let $\W_{T,k}=\W_T\cap\W_k$.

\begin{prop}\label{prop:muWk}
For each $k\ge0$,
\[
  \mu(\W_k) = (1-p^{-10})/p^{10k}
\]
and
\[
  \mu(\W_{T,k}) = \rho_T^M/p^{10k}.
\]
\end{prop}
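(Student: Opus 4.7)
The plan is to build a measure-preserving (up to a known constant) bijection between level sets at consecutive levels, and to reduce the computation inductively to the already-known measure of $\W_M=\W_0$ (namely $1-p^{-10}$, from Proposition~\ref{prop:trivmin}) and of $\W_T^M=\W_{T,0}$ (namely $\rho_T^M$). The natural tool is the scaling map
\[
\sigma\colon(a_1,a_2,a_3,a_4,a_6)\longmapsto(pa_1,p^2a_2,p^3a_3,p^4a_4,p^6a_6),
\]
which is a bijection $\W(\Z_p)\to\W(1,2,3,4,6)$ multiplying measure by $p^{-16}$. Since $\sigma$ corresponds to the change of coordinates $(X,Y)\mapsto(p^2X,p^3Y)$, the curves $E_{\sigma(\a)}$ and $\Ea$ are isomorphic over~$\Q_p$, so they share minimal discriminant and reduction type, while $\Delta(\sigma(\a))=p^{12}\Delta(\a)$. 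Hence $\lambda(\sigma(\a))=\lambda(\a)+1$, and $\sigma$ restricts to a bijection
\[
\sigma\colon\W_k\;\xrightarrow{\ \sim\ }\;\W(1,2,3,4,6)\cap\W_{k+1},
\]
with the reverse inclusion seen by applying $\sigma^{-1}$ entrywise to any element of $\W(1,2,3,4,6)\cap\W_{k+1}$; moreover this restricts further to a bijection $\W_{T,k}\to\W(1,2,3,4,6)\cap\W_{T,k+1}$ for each finite type~$T$.

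Next I would use the $\T$-invariance of $\W_k$ and $\W_{T,k}$ (translations preserve both minimality and reduction type, and therefore preserve levels) to recover the measures of $\W_{k+1}$ and $\W_{T,k+1}$ from those of their intersections with $\W(1,2,3,4,6)$. By Proposition~\ref{prop:trivmin}(1)--(2), $\W_N$ is the disjoint union of the $p^6$ translates of $\W(1,2,3,4,6)$ under a set of coset representatives of $\TT{2}{1}{3}$ in~$\T$. Intersecting this partition with the $\T$-stable set $\W_{k+1}\subseteq\W_N$ (for $k\ge0$) expresses $\W_{k+1}$ as a disjoint union of $p^6$ translates of $\W(1,2,3,4,6)\cap\W_{k+1}$, each of equal measure. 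Combining with the scaling bijection gives
\[
\mu(\W_{k+1})=p^6\cdot\mu\bigl(\W(1,2,3,4,6)\cap\W_{k+1}\bigr)=p^6\cdot p^{-16}\mu(\W_k)=p^{-10}\mu(\W_k),
\]
and identically $\mu(\W_{T,k+1})=p^{-10}\mu(\W_{T,k})$.

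An easy induction on~$k$, with base cases $\mu(\W_0)=\mu(\W_M)=1-p^{-10}$ and $\mu(\W_{T,0})=\mu(\W_T^M)=\rho_T^M$, then yields the two asserted formulas. The only non-routine point is verifying that the scaling map really does restrict to a bijection onto $\W(1,2,3,4,6)\cap\W_{k+1}$ (and not merely into it), which requires the observation that every element of $\W(1,2,3,4,6)$ is in the image of~$\sigma$; this is immediate from the definitions. The induction itself, and the reorganisation of $\W_{k+1}$ as a disjoint union of $\T$-translates, are straightforward given Proposition~\ref{prop:trivmin}.
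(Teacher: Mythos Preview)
Your proof is correct and follows essentially the same approach as the paper's own proof: both argue by induction on~$k$, using the scaling map $\sigma$ to identify $\W_k$ with $\W_{k+1}\cap\W(1,2,3,4,6)$ (picking up a factor~$p^{-16}$ in measure), and then the $\T$-orbit structure from Proposition~\ref{prop:trivmin} to relate $\mu(\W_{k+1}\cap\W(1,2,3,4,6))$ to $\mu(\W_{k+1})$ (picking up the factor~$p^6$). Your writeup is more explicit than the paper's in spelling out why the partition of $\W_N$ into translates of $\W(1,2,3,4,6)$ restricts to a partition of the $\T$-stable subset $\W_{k+1}$, but the underlying argument is the same.
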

\begin{proof}
For $k=0$ the first statement follows from the table and the second is
by definition, using $\W_T^M=\W_{T,0}$ and the definition of
$\rho_T^M$.  Proceeding by induction, scaling by $p$ maps $\W_k$ to
$\W_{k+1}\cap\W(1,2,3,4,6)$ whose measure is $\mu(\W_{k+1})/p^6$.
Hence $\mu(\W_{k})/p^{16} = \mu(\W_{k+1})/p^{6}$, so $\mu(\W_{k+1}) =
\mu(\W_k)/p^{10}$.  Similarly when we restrict to any fixed finite
type~$T$, we obtain $\mu(\W_{T,k+1}) = \mu(\W_{T,_k})/p^{10}$.
\end{proof}

This proof implies the following generalisation of the statements
above that minimality of~$\a$, and the type of $\Ea$ when minimal, only
depend on $\a\pmod{p^6}$.
\begin{cor}\label{cor:NTk}
  Let $k\ge0$.
\begin{enumerate}
\item
  The class of $\a\pmod{p^{6(k+1)}}$ determines $\lambda(\a)$ exactly
  if $\lambda(\a)\le k$.
\item
  When $\lambda(\a)\le k$, the type~$T$ of $\Ea$ depends only on
  $\a\pmod{p^{6(k+1)}}$, and each $\W_{T,k}$ is the union of
  $p^{20}N(T)$ classes modulo~$p^{6(k+1)}$.
\end{enumerate}
\end{cor}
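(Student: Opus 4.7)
The plan is to induct on~$k$, with the base case $k=0$ supplied by Proposition~\ref{prop:mod6} and Corollary~\ref{cor:NT}: minimality of~$\a$ (i.e.\ $\lambda(\a)=0$) and, when so, the type~$T$ of~$E_\a$ are determined by~$\abar$, and $\W_{T,0}=\W_T^M$ is a union of $p^{20}N(T)$ classes modulo~$p^6$. So assume the corollary holds for $k-1$ and let $\a\in\W(\Z_p)$ be given modulo~$p^{6(k+1)}$; we must decide whether $\lambda(\a)\le k$, and in that case compute~$\lambda(\a)$ and $T(E_\a)$ and account for the classes of~$\W_{T,k}$.

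First inspect $\abar$: either Proposition~\ref{prop:mod6} produces $\lambda(\a)=0$ and the type, or $\a\in\W_N$. In the latter case Proposition~\ref{prop:trivmin}(2), realised by Tate's algorithm, yields a translation $\tau\in\T$ with $\tau(\a)\in\W(1,2,3,4,6)$, whose coset modulo~$\TT213$ depends only on~$\abar$ by the same mechanism used to prove Proposition~\ref{prop:mod6}. Fix a representative~$\tau$ and compute $\tau(\a)\pmod{p^{6(k+1)}}$; translations act within each precision level, so no precision is lost. Now define $\a'\in\W(\Z_p)$ by $a'_i=\tau(\a)_i/p^i$ for $i\in\{1,2,3,4,6\}$, inverting the scaling map $\sigma:(a_1,a_2,a_3,a_4,a_6)\mapsto(pa_1,p^2a_2,p^3a_3,p^4a_4,p^6a_6)$ of Proposition~\ref{prop:muWk}. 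Since $\tau(\a)_i$ is known modulo~$p^{6(k+1)}$ and divisible by~$p^i$, $a'_i$ is known modulo~$p^{6(k+1)-i}$, whose minimum over the five values of~$i$ is~$p^{6k}$. Because translation preserves both level and type while $\sigma$ increases the level by one (Proposition~\ref{prop:muWk}), we have $\lambda(\a)=\lambda(\a')+1$ and $T(E_\a)=T(E_{\a'})$. The inductive hypothesis applied to~$\a'$ (known to precision~$p^{6k}$) therefore decides $\lambda(\a')$ and the type whenever $\lambda(\a')\le k-1$, i.e.\ precisely when $\lambda(\a)\le k$; this yields part~(1) and the type claim in part~(2).

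For the class count, Proposition~\ref{prop:muWk} gives $\mu(\W_{T,k})=\rho_T^M/p^{10k}=N(T)/p^{10(k+1)}$, and each class in $\W(\Z_p)$ modulo~$p^{6(k+1)}$ has measure~$p^{-30(k+1)}$, so $\W_{T,k}$ is a union of $N(T)\,p^{20(k+1)}$ such classes. An inductive rederivation of the same count uses the identification $\W_{T,k}\cap\W(1,2,3,4,6)=\sigma(\W_{T,k-1})$ together with the partition of $\W_{T,k}$ into $[\T:\TT213]=p^6$ translates indexed by $\T/\TT213$. The main technical obstacle in the argument is the precision bookkeeping during $\a\mapsto\tau(\a)\mapsto\a'$: one must verify that the residual freedom in~$\tau$ modulo~$\TT213$, propagated through the quadratic and cubic transformation formulas for the~$a_i$ recorded in the commented portion of the proof of Proposition~\ref{prop:trivmin}, degrades neither $\tau(\a)$ below $p^{6(k+1)}$-precision nor $\a'$ below $p^{6k}$-precision before the inductive hypothesis is invoked.
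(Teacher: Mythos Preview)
Your inductive argument via translation into $\W(1,2,3,4,6)$ followed by descaling is exactly the mechanism behind the paper's proof of Proposition~\ref{prop:muWk}, from which the paper simply asserts that this corollary follows; so your approach and the paper's coincide.

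Two remarks. First, your measure computation yields $N(T)\,p^{20(k+1)}$ classes modulo~$p^{6(k+1)}$, not the $p^{20}N(T)$ in the stated corollary; your count is the correct one (it specialises to Corollary~\ref{cor:NT} at $k=0$ and is forced by $\mu(\W_{T,k})=N(T)/p^{10(k+1)}$), so the discrepancy is a slip in the statement rather than in your argument. Second, the ``technical obstacle'' you flag at the end is not a genuine obstruction: once you fix any single representative~$\tau$ (two choices differ by an element of~$\TT213$, which preserves $\W(1,2,3,4,6)$ and hence gives another valid choice), translation by~$\tau$ preserves congruence modulo~$p^{6(k+1)}$ exactly, because the transformation formulas for the $a_i$ are polynomial with integer coefficients in $r,s,t$ and the~$a_j$. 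Thus if $\a_1\equiv\a_2\pmod{p^{6(k+1)}}$ then $\tau(\a_1)\equiv\tau(\a_2)\pmod{p^{6(k+1)}}$, and after descaling the corresponding $\a_1',\a_2'$ agree modulo~$p^{6k}$, which is precisely what the inductive hypothesis requires.
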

For example, when $k=2$, knowing $\a\pmod{p^{18}}$ we can distinguish
between the cases $\lambda(\a)=0$, $\lambda(\a)=1$, $\lambda(\a)=2$ or
$\lambda(\a)\ge3$, and in all but the last case can also determine the
type of $\Ea$ from $\a\pmod{p^{18}}$; but to distinguish between
$\lambda(\a)=3$ and $\lambda(\a)\ge4$ we would need to know
$\a\pmod{p^{24}}$.

\section{General results relating $p$-adic densities and global densities}
\label{sec:general-density}
Our aim is to use the local density results of the previous section to
obtain global density results for integral Weierstrass equations.
This is straightforward if we only impose conditions at finitely many
primes, the conclusion being in general that the global density is
given, as one would expect, by the finite product of the local
densities.  This remains true when the local conditions are genuinely
$p$-adic, and not only given by congruences to finite powers of each
prime.  However, when we impose local conditions at all primes, the
passage from local to global densities is considerably more subtle.
Some general methods in this direction have been developed, notably
the ``Ekedahl Sieve'' introduced by Ekedahl in~\cite{Ekedahl-sieve},
and the approach of Poonen and Stoll in their paper
\cite{Poonen-Stoll} on the Cassels-Tate pairing on Abelian Varieties
(see also the shorter note~\cite{Poonen-Stoll-short} by the same
authors just on this issue).  For applications to the existence of
rational points on hypersurfaces, the results of Poonen and Voloch in
\cite{Poonen-Voloch} are often applicable, as for example in the case
of plane cubic curves in the paper~\cite{cubic-densities} of the first
author with Bhargava and Fisher.

In the prior work mentioned so far, only uniform densities were used;
in the case of quadrics in $n$ variables, treated by the first author
with Bhargava, Fisher, Jones, and Keating in~\cite{quadric-densities},
a different probability distribution was required at the real place,
requiring additional analysis there.  Further refinements to the
methods may be found in the work of Bhargava, for example in
\cite{Bhargava-sieve}. Furthermore, some specific cases not covered by
these have been handled individually, for example in the work of
Bhargava such as his results with Shankar and Wang on square-free
discriminants in \cite{bhargava2016squarefree}.

The results and approaches of the papers cited cannot easily be
applied directly in our situation, without additional discussion: for
example, we need the flexibility to adjust local conditions at
finitely many primes, and to introduce weights.  For this reason,
while our account in the rest of this section is firmly based on this
prior work, it is almost self-contained, the main exception being the
proof of the codimension~$2$ criterion of
Proposition~\ref{prop:PS-Lemma21}.

\subsection{Global densities I: finitely many $p$-adic conditions}
\label{subsec:globalI}
The standard definition of the \emph{uniform density} of a subset
$Z\subseteq\Z^d$ is as follows: we define the density of~$Z$ to be
\begin{equation}\label{eqn:def-uniform-density}
\begin{split}
  \rho(Z) &= \lim_{X\to\infty}\frac{\#\{\a\in Z\mid |a_i|\le
   X\ \forall i\}}{\#\{\a\in \Z^d\mid |a_i|\le
   X\ \forall i\}}\\
   &= \lim_{X\to\infty}(2X)^{-d}\#\{\a\in Z\mid |a_i|\le
   X\ \forall i\},
\end{split}
\end{equation}
if the limit exists.  Similarly, we define the \emph{upper density}
$\overline{\rho}(Z)$ and \emph{lower density} $\underline{\rho}(Z)$,
replacing the limit by $\lim\sup$ or $\lim\inf$ respectively.

More generally given any vector of positive real \emph{weights}
$\uk=(k_1,k_2,\dots,k_d)$ with sum $k=\sum_{i=1}^d k_i$, we can define
a \emph{weighted density}
\begin{equation}\label{eqn:def-weighted-density}
\begin{split}
\rho^{\uk}(Z) &= \lim_{X\to\infty}\frac{\#\{\a\in Z\mid |a_i|\le
X^{k_i}\ \forall i\}}{\#\{\a\in\Z^d\mid |a_i|\le
X^{k_i}\ \forall i\}}\\
 &= \lim_{X\to\infty}2^{-d}X^{-k}\#\{\a\in Z\mid |a_i|\le
X^{k_i}\ \forall i\}.
\end{split}
\end{equation}
Note that neither the existence nor the value of this limit is
affected if we scale the weight vector~$\uk$ by any positive real
factor. When all the weights are equal we recover the uniform density
as a special case.

We first determine the density of any subset~$Z\subseteq\Z^d$ defined
by congruence conditions at a finite set of primes, where it is given
by a simple counting formula not depending on the weights.  Let
$M\ge1$ and let $\Sigma\subseteq(\Z/M\Z)^d$ be an arbitrary subset.
One way to define such a set is locally, by choosing a finite set of
primes~$p$, a power $p^e$ of each, and a subset
$\Sigma_p\subseteq(\Z/p^e\Z)^d$.  Then set $M=\prod_p p^e$ and
$\Sigma=\prod_{p\mid M}\Sigma_p$, where we identify $\Z/M\Z$ with
$\prod_p\Z/p^e\Z$ by the Chinese Remainder Theorem.

Given~$\Sigma$, define $Z(M,\Sigma) = \{\a\in\Z^d\mid
\a\pmod{M}\in\Sigma\}$, and denote its weighted density by
$\rho^{\uk}(M,\Sigma)=\rho^{\uk}(Z(M,\Sigma))$ (or simply
$\rho(M,\Sigma)=\rho(Z(M,\Sigma))$ in the case of uniform density).

\begin{prop}\label{prop:modMdensity}
For all positive weights $\uk=(k_1,\dots,k_d)$ and all subsets
$\Sigma\subseteq(\Z/M\Z)^d$ we have
\[
\rho^{\uk}(M,\Sigma) = \frac{\#\Sigma}{M^d}.
\]
In particular, this is independent of~$\uk$.
\end{prop}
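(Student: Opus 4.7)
The plan is to count numerator and denominator separately and take the ratio. For each target residue class $\sigma = (\sigma_1,\dots,\sigma_d) \in (\Z/M\Z)^d$, I would first count
\[
N_i(\sigma_i, X) = \#\{a_i \in \Z \mid |a_i|\le X^{k_i},\ a_i \equiv \sigma_i \pmod{M}\}.
\]
Since an interval of length $L$ contains $L/M + O(1)$ integers in any fixed residue class modulo $M$, we have $N_i(\sigma_i, X) = 2X^{k_i}/M + O(1)$, where the implied constant depends only on $M$ (not on $\sigma_i$).

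Next I would use independence of the coordinates: the number of $\a \in \Z^d$ with $|a_i|\le X^{k_i}$ for all $i$ and $\a \equiv \sigma \pmod M$ is the product $\prod_{i=1}^{d} N_i(\sigma_i, X)$. Expanding this product and letting $k = \sum k_i$ gives
\[
\prod_{i=1}^{d} \left(\frac{2 X^{k_i}}{M} + O(1)\right) = \frac{2^d X^{k}}{M^d} + O\!\left(X^{k - k_{\min}}\right),
\]
with $k_{\min} = \min_i k_i > 0$, where again the implied constant depends only on $M$ and $\uk$. Summing over the $\#\Sigma$ residue classes $\sigma \in \Sigma$ yields
\[
\#\{\a \in Z(M,\Sigma) \mid |a_i|\le X^{k_i}\ \forall i\} = \frac{\#\Sigma}{M^d}\cdot 2^d X^{k} + O\!\left(X^{k-k_{\min}}\right).
\]

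The denominator in the definition of $\rho^{\uk}$ is the special case $\Sigma = (\Z/M\Z)^d$, giving $2^d X^k + O(X^{k-k_{\min}})$; alternatively one computes it directly as $\prod_i (2\lfloor X^{k_i}\rfloor + 1) = 2^d X^k + O(X^{k-k_{\min}})$. Dividing numerator by denominator and passing to the limit $X \to \infty$, the error terms vanish (since $k_{\min} > 0$) and the ratio tends to $\#\Sigma / M^d$, as claimed. There is no serious obstacle here; the only point requiring minor care is keeping track of the error terms uniformly in $\sigma$, which is automatic because the $O(1)$ in the one-dimensional count depends only on $M$.
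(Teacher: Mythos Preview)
Your proof is correct and follows essentially the same elementary counting as the paper's own argument: both reduce to the observation that an interval of length $L$ contains $L/M+O(1)$ integers in each residue class modulo~$M$, multiply over the $d$ coordinates to get the main term $2^dX^k/M^d$ with error $O(X^{k-k_{\min}})$, and take the limit. The only cosmetic difference is that the paper bounds the number of complete residue systems in the box directly (obtaining upper and lower estimates $2^d\prod b_i$ and $2^d\prod(b_i+1)$ with $b_i=\lfloor X^{k_i}/M\rfloor$), whereas you sum over individual $\sigma\in\Sigma$; the resulting estimates and error terms are the same.
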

\begin{proof}
For $X>0$, set $Z(M,\Sigma;X) =
Z(M,\Sigma)\cap\prod_{i=1}^{d}[-X^{k_i},X^{k_i}]$.  For $1\le i\le d$,
set $b_i=\lfloor X^{k_i}/M\rfloor$, where, $\lfloor{x}\rfloor$ denotes
the integer $n$ such that $n\le x<n+1$. Then
\[
  M b_i \le X^{k_i} < M (b_i+1),
\]
and the interval $[-X^{k_i},+X^{k_i}]$ contains between $2b_i$ and
$2(b_i+1)$ complete sets of residue classes modulo~$M$.  Hence the box
$\Z^d\cap\prod_{i=1}^{d}[-X^{k_i},X^{k_i}]$ contains between $2^d\prod
b_i$ and $2^d\prod (b_i+1)$ complete sets of residue classes, each of
which contains $\#\Sigma$ elements of $Z(M,\Sigma)$.  So
$\#Z(M,\Sigma;X)$ satisfies
\[
2^d\#\Sigma\prod b_i \le \#Z(M,\Sigma;X) \le 2^d\#\Sigma\prod(b_i+1).
\]
Now $\prod b_i$ is approximately equal to $\prod X^{k_i}/M=X^{k}/M^d$
(where $k=\sum_{i=1}^dk_i$), so we approximate~$\#Z(M,\Sigma;X)$ by
$2^d\#\Sigma X^{k}/M^d$, and bound the error by noting that
$\prod(b_i+1)-\prod b_i$ is a sum of $2^d-1$ terms each bounded above
by~$X^{k-\min k_i}/M^{n}$ for some $n\le d-1$.  This gives
\[
\left|\frac{\#Z(M,\Sigma;X)}{2^dX^{k}}-\frac{\#\Sigma}{M^d}\right|
 = O(X^{-\min k_i}),
 \]
with implied constant depending on the weights, $M$ and $\Sigma$ but
not~$X$. The result follows on letting $X\to\infty$.
\end{proof}

In the uniform case ($d=k$ and all $k_i=1$), we see from this proof
that
\[
\#Z(M,\Sigma;X) = \frac{\#\Sigma}{M^d}\cdot(2X)^d + O(X^{d-1}).
\]

The sets~$Z(M,\Sigma)$ considered so far are cut out by congruence
conditions at a \emph{finite} set of primes (those which divide~$M$),
each congruence being modulo some \emph{finite} power $p^e$.  Our next
step is to consider sets determined by a finite number of local
$p$-adic conditions.

Let $S$ be any set of primes (possibly including all primes).  We may
impose local $p$-adic conditions at all~$p\in S$ by specifying a
measurable subset $\U\subseteq\prod_{p\in S}\Z_p^d$.  Embedding~$\Z$
diagonally into $\prod_{p\in S}\Z_p$, this gives a subset $\U\cap\Z^d$
of~$\Z^d$, which we denote~$Z(\U)$.  When $S$ contains all primes we
denote $\prod_{p\in S}\Z_p$ by~$\Zhat$, and the local conditions are
determined by a measurable subset of~$\Zhat^d$.

For example, we may take $\U=\prod_{p\in S}U_p$, where for each $p\in
S$ we have a measurable subset $U_p\subseteq\Z_p^d$; we now have
\[
Z(\U) = \cap_{p\in S}(U_p\cap\Z^d) = \{\a\in\Z^d\mid
\a\in U_p\ \forall p\in S\}.
\]
Set
\begin{equation}\label{eqn:def-rho}
\rho^{\uk}(\U):=\rho^{\uk}(Z(\U)),
\end{equation}
and similarly define the
upper and lower densities~$\overline{\rho}^{\uk}(\U)$
and~$\underline{\rho}^{\uk}(\U)$.  The results which follow relate
these densities with the measure $\mu(\U)$, which in the special case
equals~$\prod_{p\in S}\mu(U_p)$, and we consider whether the equality
$\rho^{\uk}(\U)=\mu(\U)$ holds.  It is easy to show that the inequality
\[
\overline{\rho}^{\uk}(\U)\le\mu(\U)
\]
always holds. See Lemma~1.1 in~\cite{Ekedahl-sieve}, where Ekedahl
defines densities by counting the intersection with $[1,X]^d$ rather
than $[-X,X]^d$, but the result is the same.

From now on, we always take $\U$ to be a subset of the form
$\prod_{p\in S}U_p$ with $U_p\subseteq\Z_p^d$ measurable, and boundary
of measure zero: $\mu(\partial(U_p))=0$.  When $S$ is finite, the
density of $Z(\U)=\U\cap\Z^d$ always exists and equals the measure
$\mu(U)$.

\begin{prop}\label{prop:finiteUp}
Let $S$ be a finite set of primes, for each~$p\in S$ let
$U_p\subseteq\Z_p^d$ with $\mu(\partial(U_p))=0$, and set
$\U=\prod_{p\in S}U_p$.  Then for an arbitrary weight vector~$\uk$,
\[
   \rho^{\uk}(\U) = \prod_{p\in S}\mu(U_p).
\]
\end{prop}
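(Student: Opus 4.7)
The plan is to approximate each $U_p$ from inside and outside by finite unions of cosets of $p^{e_p}\Z_p^d$, which reduces the problem to the congruence case already handled by Proposition~\ref{prop:modMdensity}, and then to combine primes via the Chinese Remainder Theorem.

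For a fixed $\epsilon > 0$ and each $p \in S$, I would define
\[
  V_p^{-,e} = \bigcup\{\x + p^e\Z_p^d \mid \x + p^e\Z_p^d \subseteq U_p\}, \qquad
  V_p^{+,e} = \bigcup\{\x + p^e\Z_p^d \mid (\x + p^e\Z_p^d) \cap U_p \neq \emptyset\}.
\]
Each is a finite union of cosets (by compactness of $\Z_p^d$), and the inclusions $V_p^{-,e} \subseteq U_p \subseteq V_p^{+,e}$ hold for all $e$. As $e \to \infty$, the family $V_p^{-,e}$ increases to the interior $U_p^\circ$ while $V_p^{+,e}$ decreases to the closure $\overline{U_p}$; the hypothesis $\mu(\partial U_p) = 0$ then makes both measures converge to $\mu(U_p)$. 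I would choose $e_p$ with $\mu(V_p^{+,e_p}) - \mu(V_p^{-,e_p}) < \epsilon$, set $V_p^\pm := V_p^{\pm,e_p}$ and $M = \prod_{p \in S} p^{e_p}$. The CRT isomorphism $\Z/M\Z \cong \prod_p \Z/p^{e_p}\Z$ then identifies $\prod_p V_p^\pm$ with subsets $\Sigma^\pm \subseteq (\Z/M\Z)^d$ satisfying $\#\Sigma^\pm/M^d = \prod_p \mu(V_p^\pm)$.

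To conclude, I would use the inclusions $\prod_p V_p^- \subseteq \U \subseteq \prod_p V_p^+$ to obtain $Z(M,\Sigma^-) \subseteq Z(\U) \subseteq Z(M,\Sigma^+)$ in $\Z^d$, count lattice points in the box $\{|a_i| \le X^{k_i}\}$, and apply Proposition~\ref{prop:modMdensity} to the two outer sets, yielding
\[
  \prod_p \mu(V_p^-) \;\le\; \underline{\rho}^{\uk}(\U) \;\le\; \overline{\rho}^{\uk}(\U) \;\le\; \prod_p \mu(V_p^+).
\]
A telescoping bound gives $\prod_p \mu(V_p^+) - \prod_p \mu(V_p^-) \le |S|\epsilon$, and both products lie within $|S|\epsilon$ of $\prod_p \mu(U_p)$, so letting $\epsilon \to 0$ forces $\rho^{\uk}(\U)$ to exist and equal $\prod_{p \in S} \mu(U_p)$. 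The only delicate point is the approximation step, which rests on the neighbourhood-basis structure of the profinite topology on $\Z_p^d$ together with the hypothesis $\mu(\partial U_p) = 0$; once that is in hand, the rest is a routine squeeze argument combining Proposition~\ref{prop:modMdensity} with CRT.
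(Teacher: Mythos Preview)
Your proof is correct and follows the same strategy as the paper's: approximate each $U_p$ by finite unions of cosets of $p^e\Z_p^d$, combine via the Chinese Remainder Theorem, and invoke Proposition~\ref{prop:modMdensity}. The paper's version is more compressed, writing out only the outer approximation (a single decreasing family $Y_\lambda = \{\a\in\Z^d : \a \equiv \a_p \pmod{p^\lambda}\ \text{for some}\ \a_p\in U_p,\ \forall p\in S\}$ with a common exponent for all primes) and then asserting the limit; your explicit two-sided squeeze with the inner sets $V_p^{-}$ supplies the lower bound that the paper leaves implicit.
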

\begin{rmk}
Note that this is essentially contained in the proof by Poonen and
Stoll in Lemma~20 of~\cite{Poonen-Stoll}, but there they include a
condition at the infinite place and do not have weights.  When the
infinite place is included we expect the density to depend on the
weights. By restricting our attention to sets defined by conditions
only at the finite places we obtain a simplification.
\end{rmk}

\begin{proof}
Set $M=\prod_{p\in S}p$.  For $\lambda\ge1$, define
\[
Y_{\lambda}=\{\a\in\Z^d\mid (\forall p\in S)(\exists \a_p\in U_p):
\a\equiv\a_p\pmod{p^{\lambda}}\}.
\]
Then
$\rho^{\uk}(Y_{\lambda})=\#\Sigma_{\lambda}/M^{d\lambda}$ by
Proposition~\ref{prop:modMdensity}, where $\Sigma_{\lambda}$ is the
reduction modulo~$M^{\lambda}$ of $Y_{\lambda}$, noting that
$Y_{\lambda}$ is a union of complete residue classes
modulo~$M^{\lambda}$.

The sets $Y_{\lambda}$ are nested ($Y_{\lambda+1}\subseteq
Y_{\lambda}$), their intersection is the closure of $Z(\U)$, which has
the same measure as $Z(\U)$ by our assumption on the boundary
measures.  Hence $\rho^{\uk}(\U) =
\lim_{\lambda\to\infty}\rho^{\uk}(Y_{\lambda}) =
\lim_{\lambda\to\infty}\#\Sigma_{\lambda}/M^{d{\lambda}} =
\prod_p\mu(U_p)$, where the last equality follows by the Chinese
Remainder Theorem and the definition of the $p$-adic measure.
\end{proof}

\subsection{Alternative choices of global density}
\label{subsec:alt-global}
The results of the previous subsection show that, provided that we
only consider subsets $Z\subseteq\Z^d$ defined by local conditions at
finitely many primes, weighted density does not depend on the weight
vector, so is the same as uniform density.

A different independence on the choice of a real probability
distribution of the global form of $p$-adic density was also noted
in~\cite{quadric-densities} by the first author with Bhargava
\textit{et al.}.  Let $D$ be a sufficiently
well-behaved\footnote{Piecewise smooth and rapidly decaying, in the
  sense that $D(x)$ and all its partial derivatives are $O(|x|^{-N})$
  for all $N>0$.} probability distribution on $\R^d$, so
$\int_{\R^d}D(\x)d\x=1$.  Then for $Z\subseteq\Z^d$, define
\[
\rho^D(Z) = \lim_{X\to\infty} \frac{\sum_{\a\in Z}D(\a/X)}{\sum_{\a\in\Z^d}D(\a/X)}.
\]
To recover our original (unweighted) definition take $D=U$, the
uniform distribution on the box $[-1,1]^d$.

It follows from \cite[\S2]{quadric-densities} that $\rho^D(Z)$ is
independent of the distribution~$D$.  A similar result (with a similar
proof) would hold for an analogous weighted definition of~$\rho^D(Z)$:
\[
\rho^{D,\uk}(Z) = \lim_{X\to\infty} \frac{\sum_{\a\in
    Z}D(\dots,a_i/X^{k_i},\dots)}{\sum_{\a\in\Z^d}D(\dots,a_i/X^{k_i},\dots)}.
\]
For the applications in this paper, we are not concerned with
constraints at the infinite place, so will not need this generality,
but it might be useful in other applications.  For example, we could
compute the density of elliptic curves over~$\R$ with positive and
negative discriminant, and hence include a fixed sign of the
discriminant in density results for elliptic curves over~$\Q$.  This
will depend on the distribution.

\subsection{Global densities II: infinitely many $p$-adic conditions}
\label{subsec:globalII}

We will closely follow the form of the Ekedahl Sieve used by Poonen
and Stoll, referring to their paper~\cite[\S9.3]{Poonen-Stoll} as
needed.  We find it convenient to discuss their results in terms of
the notion of an \emph{admissible family} to encapsulate the critical
condition in equation (10) of \cite[Lemma 20]{Poonen-Stoll} but not
given a name there.

Let $d\ge1$, and let $\U=\prod U_p\subseteq\Zhat^d$ be a subset
determined by a family of subsets $U_p\subseteq\Z_p^d$, one for each
rational prime~$p$.  As before, we suppose that each~$U_p$ is
measurable, and assume that the boundaries have measure zero.
For each~$M>0$ define
\begin{align*}
   Z_M(\U) &= \{\a\in\Z^d\mid \a\in U_p\ \text{for some prime $p>M$}\}\\
   &= \bigcup_{p>M}(U_p\cap\Z^d).
\end{align*}
For a positive weight vector~$\uk$, set
$\rho_M^{\uk}(\U)=\overline{\rho}^{\uk}(Z_M(\U))$.
\begin{defn}
The family $\U$ is \emph{admissible with respect to~$\uk$},
or \emph{$\uk$-admissible}, if
$\lim_{M\to\infty}\rho_M^{\uk}(\U)=0$.
\end{defn}
We will omit~$\uk$ from the notation when all the weights
are equal.

\begin{example}\label{ex:sq-free-ints}
Let $U_p=p^2\Z_p$ for all~$p$.  The associated set~$\U=\prod_pU_p$ is
admissible.

\end{example}

Proof of admissibility in this example uses the fact that
$\sum_p\mu(U_p)$ converges; however, this is not sufficient for $\prod
U_p$ to be admissible. The next example, where $\mu(U_p)=0$ for all
$p$ but still $\U$ is not admissible, was shown to us by Michael
Stoll.

\begin{example}
For $n\ge1$ let $p_n$ be the $n$th prime, and define $U_{p_n}=\{n\}$,
the singleton set.  Then $\mu(U_p)=0$ for all $p$, but $Z_M(\U)$
contains all positive integers~$n$ except for the finitely many for
which $p_n \le M$, so its density for each~$M$ is the same as the
density of the set of all positive integers, namely $1/2$.  So the
limit is not zero and $\U=\prod U_p$ is not admissible.
\end{example}

It will be useful to have simple sufficient criteria for a family to
be admissible.  First we note the following easy consequences of the
definition.
\begin{lem}\label{lem:admiss}
\begin{enumerate}
\item Let $\U'=\prod U_p'$ be a second family such that $U_p'=U_p$ for
  all but finitely many primes~$p$.  Then $\U$ is
  $\uk$-admissible if and only if $\U'$ is
  $\uk$-admissible, for any weight vector~$\uk$.
\item Let $\U'=\prod U_p'$ be a second family with $\U'\subseteq\U$
  (that is, $U_p'\subseteq U_p$ for all~$p$).  Then
  $\uk$-admissibility of~$\U$ implies
  $\uk$-admissibility of~$\U'$.
\end{enumerate}
\end{lem}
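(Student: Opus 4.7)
The plan is to prove both parts directly from the definition of $\uk$-admissibility, exploiting how $Z_M(\U)$ depends on the tail of the family~$\U$.

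For part~(1), I would observe that if $U_p'=U_p$ for all primes~$p$ outside some finite set~$S$, then for any $M\ge\max S$ we have a literal equality of sets
\[
Z_M(\U)=\bigcup_{p>M}(U_p\cap\Z^d)=\bigcup_{p>M}(U_p'\cap\Z^d)=Z_M(\U'),
\]
since the union only involves primes outside~$S$. Consequently $\rho_M^{\uk}(\U)=\rho_M^{\uk}(\U')$ for every sufficiently large~$M$, and the two limits as $M\to\infty$ coincide. Thus one tends to zero if and only if the other does, giving the equivalence of $\uk$-admissibility.

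For part~(2), the hypothesis $U_p'\subseteq U_p$ for all~$p$ gives the termwise inclusion $U_p'\cap\Z^d\subseteq U_p\cap\Z^d$, and taking unions over $p>M$ yields $Z_M(\U')\subseteq Z_M(\U)$ for every $M$. By monotonicity of the upper density with respect to set inclusion (applied to the weighted counting function in~\eqref{eqn:def-weighted-density}, or equivalently noting that more points in the set can only increase the $\limsup$), we get
\[
0\le\rho_M^{\uk}(\U')=\overline{\rho}^{\uk}(Z_M(\U'))\le\overline{\rho}^{\uk}(Z_M(\U))=\rho_M^{\uk}(\U).
\]
If $\U$ is $\uk$-admissible, the right-hand side tends to zero as $M\to\infty$, forcing $\rho_M^{\uk}(\U')\to0$ by the squeeze, so $\U'$ is $\uk$-admissible as well.

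Neither part presents a real obstacle: both reduce to elementary set-theoretic observations (tail-equality and monotonicity) combined with the definition of $\rho_M^{\uk}$. The only point requiring a moment's care is confirming that upper density is monotone under inclusion for weighted counts, but this is immediate from the definition of $\limsup$ applied to a pointwise-larger counting function.
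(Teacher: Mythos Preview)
Your proof is correct and follows essentially the same approach as the paper: for part~(1) the paper notes that $Z_M(\U)=Z_M(\U')$ once $M$ exceeds the largest prime where the families differ, and for part~(2) it simply records the inclusion $Z_M(\U')\subseteq Z_M(\U)$. Your write-up is more detailed (spelling out the squeeze and the monotonicity of upper density), but the underlying argument is identical.
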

\begin{proof}
The first statement holds, since $Z_M(\U)=Z_M(\U')$ for all $M$ greater
than the largest prime $p$ for which $U_p\not=U_p'$; the second is
clear, since $Z_M(\U')\subseteq Z_M(\U)$.
\end{proof}

Let $S$ be any set of primes.  Define
\[
   \rho^{\uk}(\U,S) = \rho^{\uk}(\{\a\in\Z^d\mid \a\in U_p\iff p\in S\}),
\]
the density of the set of integer vectors which lie in the
distinguished subset $U_p$ precisely for the primes in $S$.  Taking
$S$ to be the set of all primes, we have
$\rho^{\uk}(\U,S)=\rho^{\uk}(\U)$ as defined in~\eqref{eqn:def-rho}.
In what follows we will use the subsets $U_p$ to encode conditions to
be avoided, so that the density we are most interested in is
$\rho^{\uk}(\U,\emptyset)$, which we hope under certain conditions to
equal $\prod_p(1-s_p)$, where~$s_p=\mu(U_p)$.

The result from~\cite{Poonen-Stoll} which we will use is the
following: for admissible families, the density exists and equals the
measure, so we have the desired product formula.
\begin{prop}\label{prop:PS-Lemma20}
Let $\U=\prod U_p$ be an admissible family with respect to the weight
vector~$\uk$, with $s_p=\mu(U_p)$ and $\mu(\partial U_p)=0$.  Then
$\sum_{p}s_p$ converges, and for every finite set $S$ of primes,
\begin{equation}\label{eqn:PS-formula}
   \rho^{\uk}(\U,S) = \prod_{p\in S}s_p \prod_{p\notin S}(1-s_p).
\end{equation}
In particular, the density of the set of~$\a\in\Z^d$ which do not lie
in~$U_p$ for any prime~$p$ is $\rho^{\uk}(\U,\emptyset) =
\prod_p(1-s_p)$, and $\rho^{\uk}(\U,S) = 0$ if $S$ is infinite.
\end{prop}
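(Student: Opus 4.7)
The plan is to reduce to Proposition~\ref{prop:finiteUp} by approximating the condition ``$\a \in U_p \iff p \in S$'' by its restriction to the finitely many primes $p \le M$, and to absorb the resulting error into $\rho_M^{\uk}(\U)$, which tends to zero by admissibility. The essential observation is that once the condition has been verified for all $p \le M$, the only $\a$ whose membership status is still in doubt are those lying in $Z_M(\U)$.

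First I would establish convergence of $\sum_p s_p$. For any finite $F \subseteq \{p > M\}$, applying Proposition~\ref{prop:finiteUp} to the finite family $\{\Z_p \setminus U_p\}_{p \in F}$ (whose boundaries also have measure zero) gives
\[
\rho^{\uk}\Bigl(\bigcap_{p \in F}(\Z^d \setminus U_p)\Bigr) = \prod_{p \in F}(1 - s_p).
\]
The complement in $\Z^d$ is $\bigcup_{p \in F}(U_p \cap \Z^d) \subseteq Z_M(\U)$, so $1 - \prod_{p \in F}(1 - s_p) \le \rho_M^{\uk}(\U)$. Taking $F$ to exhaust $\{p > M\}$, then letting $M \to \infty$, admissibility forces $\prod_p (1 - s_p)$ to converge to a positive limit, which is equivalent to $\sum_p s_p < \infty$.

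For finite $S$, choose $M$ so large that $S \subseteq \{p \le M\}$ and set
\[
A_S^M = \{\a \in \Z^d : \a \in U_p \text{ for } p \in S, \ \a \notin U_p \text{ for } p \le M,\ p \notin S\}.
\]
Proposition~\ref{prop:finiteUp} gives $\rho^{\uk}(A_S^M) = \sigma_S^M := \prod_{p \in S} s_p \prod_{p \le M,\, p \notin S}(1 - s_p)$. Writing $B_S$ for the set whose density defines $\rho^{\uk}(\U, S)$, the containments $B_S \subseteq A_S^M$ and $A_S^M \setminus B_S \subseteq Z_M(\U)$ (the latter because any $\a$ in the difference lies in some $U_p$ with $p > M$, hence $p \notin S$) combine with the disjoint decomposition $A_S^M = B_S \sqcup (A_S^M \setminus B_S)$ to give
\[
\sigma_S^M - \rho_M^{\uk}(\U) \le \underline{\rho}^{\uk}(B_S) \le \overline{\rho}^{\uk}(B_S) \le \sigma_S^M.
\]
Since $\sigma_S^M \to \prod_{p \in S} s_p \prod_{p \notin S}(1 - s_p)$ by convergence of $\sum s_p$, and $\rho_M^{\uk}(\U) \to 0$ by admissibility, the density $\rho^{\uk}(\U, S)$ exists and equals this product, which covers the $S = \emptyset$ case as well. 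For infinite $S$, the trivial containment $B_S \subseteq \bigcap_{p \in S,\, p \le M}(U_p \cap \Z^d)$, together with Proposition~\ref{prop:finiteUp}, yields $\overline{\rho}^{\uk}(B_S) \le \prod_{p \in S,\, p \le M} s_p$; since $s_p \to 0$, this tends to $0$ as $M \to \infty$.

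The main obstacle is the sandwich step for finite $S$: admissibility is precisely the hypothesis needed to control the ``uncertain'' set $Z_M(\U)$ of integer vectors that a large prime might still eject from $B_S$. Once that error vanishes in the limit, everything else is a mechanical application of Proposition~\ref{prop:finiteUp}, inclusion--exclusion on finitely many primes, and the elementary observation that $s_p \to 0$.
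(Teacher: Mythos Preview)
Your proof is correct and follows essentially the same strategy as the paper: approximate by conditions at primes $p\le M$, use Proposition~\ref{prop:finiteUp} for the truncated problem, and absorb the discrepancy into $Z_M(\U)$ via admissibility. Two minor differences are worth noting: the paper first reduces general finite~$S$ to $S=\emptyset$ by complementing $U_p$ for $p\in S$ (invoking Lemma~\ref{lem:admiss}), whereas you handle arbitrary finite~$S$ directly in the sandwich; and you supply an explicit argument for the convergence of $\sum_p s_p$, which the paper's proof asserts but does not spell out. One small imprecision: your argument actually shows that the \emph{tail} product $\prod_{p>M}(1-s_p)$ is positive for large~$M$, which suffices for $\sum_p s_p<\infty$, but the full product $\prod_p(1-s_p)$ could still vanish if some $s_p=1$ for small~$p$.
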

\begin{proof}
Replacing $U_p$ by its complement in $\Z_p^d$ for $p \in S$ gives
another admissible family by Lemma~\ref{lem:admiss}, and the general
formula \eqref{eqn:PS-formula} follows from the same result for this
latter family. Hence we may assume that $S=\emptyset$.


To ease notation we omit the superscript~$\uk$, writing $\rho$ for
$\rho^{\uk}$.

Assume that $U_p=\emptyset$ for all $p>M$ for some~$M$. Let $U'_p$ be
the complement of~$U_p$ in~$\Z_p^d$.  Now
\[
\rho(\U,\emptyset) = \rho(\prod_{p\le M}U'_p) = \prod_{p\le M}(1-s_p),
\]
by Proposition~\ref{prop:finiteUp}.  This gives~\eqref{eqn:PS-formula}
since $s_p=0$ for all~$p>M$.

Hence, we have in general for each $M>0$,
\[
\rho(\prod_{p\le M} U'_p) = \prod_{p\le M}(1-s_p).
\]
Now $Z(\prod_p U'_p) \subseteq Z(\prod_{p\le M} U'_p)$; the sets
$Z(\prod_{p\le M} U'_p)$ form a decreasing nested sequence whose
intersection as $M\to\infty$ is $Z(\prod_p U'_p)$.  The complement is
\begin{align*}
Z(\prod_{p\le M} U'_p) \setminus Z(\prod_{p} U'_p) &= \{\a\in\Z^d\mid
\text{$\a\notin U_p$ for all $p\le M$, and $\a\in U_p$ for some
  $p>M$}\}\\
&\subseteq Z_M(\U),
\end{align*}
whose density tends to zero by the admissibility condition.  Hence
\[
\rho(\U,\emptyset) = \rho(Z(\prod_{p} U'_p)) =
\lim_{M\to\infty}\rho(\prod_{p\le M} U'_p) =
\lim_{M\to\infty}\prod_{p\le M}(1-s_p) = \prod_p(1-s_p)
\]
as required.
\end{proof}

Note that it follows from Proposition~\ref{prop:PS-Lemma20} that the
density $\rho^{\uk}(\U,S)$ is independent of the weight vector~$\uk$,
being equal to a product which does not depend on~$\uk$, provided
that~$\U$ is~$\uk$-admissible.

\begin{ex-cont}[Example 1 continued]
Proposition~\ref{prop:PS-Lemma20}, together with the admissibility
statement of Example~\ref{ex:sq-free-ints}, implies the well-known
result that the density of the set of square-free integers is
$1/\zeta(2)$.  Since $\U=\prod p^2\Z_p$ defines an admissible family,
with $s_p=1/p^2$, the density of square-free integers is
$\rho(\U,\emptyset)=\prod_p(1-1/p^2)=1/\zeta(2)$.
\end{ex-cont}

Closed subschemes of $\Z^d$ of codimension at least~$2$ determine
admissible conditions.  The following is the simplest example:

\begin{example}
  The set of coprime pairs $(a,b)\in\Z^2$ has density $1/\zeta(2)$.

\end{example}

This is a special case (with $d=2$, $f=X_1$ and $g=X_2$) of the
following much more general result of Poonen and Stoll (see
\cite[Lemma 21]{Poonen-Stoll}), which will be crucial for our
applications in the next section.  Note that the proof given in
\cite{Poonen-Stoll} simply states that it follows immediately from a
result of Ekedahl (Theorem 1.2 of~\cite{Ekedahl-sieve}) applied to the
closed subscheme of the affine scheme $\A_{\Z}^d$ cut out by $f=g=0$,
making use of the fact that the subscheme has codimension~$2$.
However, while it is clear that Ekedahl's theorem implies that the
product formula holds in this situation, for our applications in the
next section we need to know that $\prod U_p$ is admissible, so that
we can adjust the $p$-adic condition at $p=2$ and $p=3$.  It is hard
to extract this precise statement from Ekedahl's proof, but the
necessary details have been supplied by Bhargava
in~\cite[Theorem~3.3]{Bhargava-sieve} which we use instead.

\begin{prop}\label{prop:PS-Lemma21}
Let $f,g\in\Z[X_1,\dots,X_d]$ be coprime polynomials.  Let
$\U=\prod U_p$ where
\[
   U_p = \{\a\in\Z_p^d\mid f(\a)\equiv g(\a)\equiv0\pmod{p}\}.
\]
Then $\U$ is $\uk$-admissible, for all weight vectors~$\uk$.
\end{prop}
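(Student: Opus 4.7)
The plan is to invoke Theorem~3.3 of \cite{Bhargava-sieve}, the quantitative codimension-$2$ Ekedahl sieve, as the authors' remarks preceding the proposition explicitly suggest. The starting observation is that, since $f$ and $g$ are coprime in $\Z[X_1,\dots,X_d]$, the closed subscheme $Y:=V(f,g)\subseteq\A^d_\Z$ has codimension at least~$2$ in its generic fibre (and, away from the finitely many primes dividing the content of a Bezout-type relation, in every special fibre as well). This geometric fact is the only input needed to feed into Bhargava's theorem.

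Fixing a weight vector $\uk=(k_1,\dots,k_d)$ and setting $k=\sum_ik_i$, for each $X>0$ let $B_X=\prod_{i=1}^d[-X^{k_i},X^{k_i}]\subseteq\R^d$, a rectangular box of Euclidean volume $2^dX^{k}$. The key step is to establish a bound of the shape
\[
N(X,M):=\#\{\a\in B_X\cap\Z^d\mid\a\in U_p\text{ for some prime }p>M\}\;=\;O\!\left(\frac{X^{k}}{M^{c}}\right)
\]
for some $c>0$, with implied constant depending on $f,g,\uk$ but independent of $X$ and $M$. Dividing by $\#\{\a\in\Z^d\mid|a_i|\le X^{k_i}\}=(1+o(1))\cdot 2^dX^k$ (which is Proposition~\ref{prop:modMdensity} applied with $M=1$) and taking $\limsup_{X\to\infty}$ then yields $\rho_M^{\uk}(\U)=O(M^{-c})$, which tends to zero as $M\to\infty$, establishing $\uk$-admissibility.

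The main obstacle is precisely the one flagged by the authors: Bhargava's Theorem~3.3 is phrased for uniform (cubic) boxes, whereas weighted densities require asymmetric rectangular ones. I would address this by re-running Bhargava's argument with $B_X$ in place of a cube: he splits the contributing primes $p>M$ into ranges and applies, for small primes, the Lang--Weil bound $\#Y(\F_p)=O(p^{d-2})$ together with the standard lattice-point count $\#(B_X\cap(\Lambda+\a_0))=\mathrm{vol}(B_X)/[\Z^d:\Lambda]+O(\text{lower order})$ for any sublattice $\Lambda\subseteq\Z^d$, and for large primes a bound on integer points lying on the individual hypersurfaces $\{f=0\}$ and $\{g=0\}$ (which are of positive codimension). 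Each ingredient depends only on the volume of $B_X$ and on the condition that $\min_iX^{k_i}\to\infty$, not on $B_X$ being cubic; the latter condition is automatic as $X\to\infty$ since all $k_i$ are strictly positive. Verifying that the error terms in Bhargava's estimates remain of smaller order than $X^k$ in this rectangular setting, with $X^d$ uniformly replaced by $X^k$, is where the genuine work lies, but no new idea beyond Bhargava's is required.
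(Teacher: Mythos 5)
Your proposal is correct and follows the same route as the paper: both reduce the statement to Bhargava's quantitative codimension-$2$ Ekedahl sieve \cite[Theorem~3.3]{Bhargava-sieve}, applied to the scheme $V(f,g)$, and then extend the estimate from cubic to weighted rectangular boxes. The paper handles the weighted extension by citing a remark of Bhargava, Shankar, and Wang in \cite{bhargava2016squarefree} rather than redoing the lattice-point and Lang--Weil estimates as you sketch, but this is a difference of exposition, not of method.
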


\begin{proof}
The $\Z$-scheme~$Y$ cut out by $f=g=0$ has codimension~$2$.  In the
case of uniform weights, we may apply Bhargava's estimate
\cite[Theorem~3.3]{Bhargava-sieve} (with $n=d$, $k=2$, $B=[-1,1]^d$,
$r=X$) to see that the cardinality of $Z_M(\U)$ is $O(X^d/(M\log M) +
X^{d-1})$, and hence $\rho_M(\U) = O(1/(M\log M))$ which tends to~$0$
as~$M\to\infty$.

For the general case, we note that (as remarked by Bhargava \textit{et
  al.} in \cite[p.~4]{bhargava2016squarefree}), his result
\cite[Theorem~3.3]{Bhargava-sieve} also holds in the weighted case.
\end{proof}

\begin{rmk}
It has been observed by Bhargava (see the remarks on page~4
of~\cite{bhargava2016squarefree} by Bhargava \textit{et al.} for a
similar observation) that among families $\U=\prod U_p$ with
$\mu(U_p)=O(1/p^2)$, it is necessary to distinguish between those
where $U_p$ is defined by two independent ``mod~$p$'' conditions, as
in Proposition~\ref{prop:PS-Lemma21}, and those defined by a single
``mod~$p^2$'' condition.  An example of the latter is to take a single
square-free polynomial $f\in\Z[X_1,\dots,X_d]$ and define $U_p$ to be
the subset of $\a\in\Z_p^d$ where $f(\a)\equiv0\pmod{p^2}$, in order
to determine the density of the set of $\a\in\Z^d$ such that $f(\a)$
is square-free.  In the former case, the family is $\uk$-admissible
for any weights~$\uk$, but in the latter case additional work is
needed in order to establish $\uk$-admissibility for suitable~$\uk$
and~$f$, to conclude that the global weighted density is the product
of local densities.  The example treated in
\cite{bhargava2016squarefree}, of monic integral polynomials with
square-free discriminant, is of the latter type.

Moreover, as discussed by Bhargava in \cite[\S1.3]{Bhargava-sieve}, a
general result that the global density exists for all square-free~$f$
and is given by the product formula, is closely related to the
$abc$-conjecture.  In \cite{Granville-ABC}, Granville proved that the
$abc$-conjecture implies the result for polynomials in one variable
and arbitrary degree (the case of quadratics is easier, and for cubics
was established by Hooley in~\cite{Hooley-power-free}, these cases
being unconditional).  In~\cite{Poonen-squarefree}, Poonen proves this
also for multivariable polynomials, using an unconditional reduction
to the univariate case.
\end{rmk}

The simplest example where we require\footnote{See, however, the
  remark at the end of this section.} non-uniform weights to establish
$\uk$-admissibility is the following.  This example is closely related
to the density of monic cubics in $\Z[X]$ with square-free
discriminant, and we give details in the following example as a
similar technique will be required in the next section when we
consider the density of integral Weierstrass equations with
square-free discriminant.

\begin{example}\label{ex:sqfree3disc}
Let $S=\{(a,b)\in\Z^2\mid a^3-b^2\ \text{is square-free}\}$, cut out
by the local conditions $\U=\prod U_p$ where $U_p=\{(a,b)\in\Z_p^2\mid
a^3\equiv b^2\pmod{p^2}\}$.  We show that~$\U$ is $\uk$-admissible for
the weights~$\uk=(2,3)$, and hence that $S$ has density given by the
product formula
\[
    \rho^{\uk}(S) = \prod_p\left(1-2/p^2+1/p^3\right).
\]
Write $U_p$ as the disjoint union~$U_p'\cup U_p''$, where
$U_p'=p\Z_p^2$ and $U_p''=\{(a,b)\in\Z_p^2\mid p\nmid ab, p^2\mid
a^3-b^2\}$.  Set $\U'=\prod U_p'$ and $\U''=\prod U_p''$.

\begin{lem}
  $\mu(U_p)=2/p^2-1/p^3$.
\end{lem}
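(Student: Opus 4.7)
The plan is to exploit the given disjoint decomposition $U_p = U_p' \sqcup U_p''$, which holds because $p^2 \mid a^3 - b^2$ together with $p \mid a$ forces $p \mid b$, and symmetrically. Since $U_p' = p\Z_p \times p\Z_p$ has measure $1/p^2$, it suffices to show $\mu(U_p'') = (p-1)/p^3$.

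Because the condition defining $U_p''$ depends only on $(a,b) \bmod p^2$, we have $\mu(U_p'') = N/p^4$, where
\[
N = \#\{(a,b) \in ((\Z/p^2\Z)^*)^2 : a^3 \equiv b^2 \pmod{p^2}\}.
\]
I would compute $N$ by a Hensel-style lift, counting first modulo $p$ and then the fibres over each residue class.

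For the mod $p$ step, let $M = \#\{(\alpha,\beta) \in ((\Z/p\Z)^*)^2 : \alpha^3 \equiv \beta^2 \pmod p\}$. This is the kernel of the group homomorphism $((\Z/p\Z)^*)^2 \to (\Z/p\Z)^*$ sending $(\alpha,\beta) \mapsto \alpha^3\beta^{-2}$. Its image is the subgroup generated by cubes and inverse squares, which equals all of $(\Z/p\Z)^*$ since $\gcd(3,2)=1$; hence $M = (p-1)^2/(p-1) = p-1$ for every prime $p$ (including $p=2$, where both sides are trivially $1$). For the lift step, fix such an $(\alpha,\beta)$, write $\alpha^3 - \beta^2 = pk$ with $k \in \Z_p$, and parametrise lifts as $a = \alpha + px$, $b = \beta + py$ with $x,y \in \Z/p\Z$. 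A direct expansion gives
\[
a^3 - b^2 \equiv p\bigl(k + 3\alpha^2 x - 2\beta y\bigr) \pmod{p^2},
\]
so the condition $p^2 \mid a^3 - b^2$ reduces to the linear congruence $3\alpha^2 x - 2\beta y \equiv -k \pmod p$. Since $\alpha,\beta$ are units, the only way for both coefficients $3\alpha^2$ and $2\beta$ to vanish mod $p$ is for $p$ to divide both $3$ and $2$, which never happens. The congruence therefore has exactly $p$ solutions, giving $N = p(p-1)$.

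Combining, $\mu(U_p'') = (p-1)/p^3$ and
\[
\mu(U_p) = \frac{1}{p^2} + \frac{p-1}{p^3} = \frac{2p-1}{p^3} = \frac{2}{p^2} - \frac{1}{p^3}.
\]
The only real point of care is the uniformity at $p=2$ and $p=3$, where exactly one of $3\alpha^2$ or $2\beta$ vanishes mod $p$; but the group-theoretic framing of $M$ and the fact that at most one coefficient can vanish at once lets the argument go through without any separate case analysis.
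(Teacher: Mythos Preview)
Your proof is correct and uses the same decomposition $U_p = U_p' \sqcup U_p''$ as the paper, but the count of $N = \#\{(a,b) \in ((\Z/p^2\Z)^*)^2 : a^3 = b^2\}$ is done differently. The paper observes that the map $(a,b) \mapsto b/a$ gives a bijection from this set onto $(\Z/p^2\Z)^*$, with inverse $t \mapsto (t^2,t^3)$; this yields $N = \varphi(p^2) = p(p-1)$ in one line, uniformly in $p$, with no lifting or case consideration at all. Your Hensel-style count---first determining $M = p-1$ solutions mod $p$ via the surjectivity of $(\alpha,\beta)\mapsto\alpha^3\beta^{-2}$, then lifting each by solving a linear congruence with at least one unit coefficient---is sound and arrives at the same $N$, but is a bit longer and requires the observation that $p$ cannot divide both $2$ and $3$. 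The bijection is the slicker route here; your method, on the other hand, would generalise more readily to congruences where no such parametrisation is available.
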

\begin{proof}
Clearly $\mu(U_p')=1/p^2$.  To compute $\mu(U_p'')$ it suffices to
consider $a,b$ modulo~$p^2$ and note that there is a bijection between
$\{(a,b)\in((\Z/p^2\Z)^*)^2\mid a^3=b^2\}$ and $(\Z/p^2\Z)^*$ given by
$(a,b)\mapsto b/a$ with inverse $t\mapsto(t^2,t^3)$.  Hence
$\mu(U_p'')=\varphi(p^2)/p^4=1/p^2-1/p^3$.
\end{proof}

In the language of \cite{bhargava2016squarefree} by Bhargava
\textit{et al.}, $a^3-b^2$ is ``strongly divisible'' by $p^2$ for
$(a,b)\in U_p'$ but only ``weakly divisible'' for $(a,b)\in U_p''$.
The proof of $\uk$-admissibility for $\U'=\prod_p U_p$ is easier, and holds
for arbitrary weights, while that for $\U''=\prod_p U_p''$ is more subtle,
and only works when $3k_1\le2k_2$.  The choice~$(2,3)$ for the weights
is natural, considering that the discriminant of the
cubic~$X^3-3aX+2b$ is $108(a^3-b^2)$.  Hence, apart from the
conditions at $2$ and~$3$ requiring adjustment, the density of~$S$
would give the density of monic cubics with square-free discriminant,
with weights matching the natural ones for the coefficients of a monic
univariate polynomial.  The main result in
\cite{bhargava2016squarefree} gives the density of monic polynomials
in~$\Z[X]$ with square-free discriminant as the product of local
densities, in arbitrary degree, the result for degree~$3$ being that
the density is $\frac{1}{2}\prod_{p\ge3}(1-2/p^2-1/p^3)$, agreeing
with our formula~$\rho(S)=\prod_{p}(1-2/p^2-1/p^3)$ except for the
local density at~$2$.  In~\cite{bhargava2016squarefree}, the weights
used for monic cubics $X^3+a_1X^2+a_2X+a_3$ are $\uk=(1,2,3)$,
consistent with our choice of weights $\uk=(2,3)$ for~$S$.

In showing that $\U'$ and $\U''$ are admissible, we may ignore the
set~$Z_0$ of pairs $(a,b)\in\Z^2$ with $a^3=b^2$ (that is, pairs of
the form $(t^2,t^3)$ for some $t\in\Z$), as well as those for which
$ab=0$, since these form a subset of density zero.

We first show that $\U'$ is admissible, with arbitrary positive
weights~$k_1,k_2$. (This also follows from
Proposition~\ref{prop:PS-Lemma21}.)  For this we must estimate the
cardinality of the set
\[
\bigcup_{p>M}\{(a,b)\in p\Z^2\setminus Z_0: |a|\le X^{k_1}, |b|\le
X^{k_2}\},
\]
divide by $4X^{k_1+k_2}$ and let $X\to\infty$ to obtain an estimate
for the tail density~$\rho_M(\U')$.  The $p$th set in the union has
cardinality $O((X^{k_1}/p)(X^{k_2}/p))=O(X^{k_1+k_2}/p^2)$, and is
empty for $p>X^{\min(k_1,k_2)}$, so the union has cardinality
\[
O(\sum_{M<p\le X^{\min(k_1,k_2)}}X^{k_1+k_2}/p^2).
\]
Dividing by~$4X^{k_1+k_2}$ and letting $X\to\infty$, this is bounded
above by~$\sum_{p>M}1/p^2$ and hence tends to~$0$ as $M\to\infty$.

Now we show that $\U''$ is $\uk$-admissible for $\uk=(2,3)$; the same
argument is valid whenever $3k_1\le2k_2$, but not for equal weights.  We
estimate the cardinality of the set
\[
\bigcup_{p>M}\{(a,b)\in\Z^2\setminus Z_0: |a|\le X^2, |b|\le X^3,
p\nmid ab, p^2\mid a^3-b^2\},
\]
and show that, after dividing by~$4X^5$ and letting $X\to\infty$, the
resulting tail density~$\rho_M(\U'')$ tends to~$0$ as~$M\to\infty$.
The $p$th set in this union is empty for $p>\sqrt{2}X^3$, since
$p^2\le|a^3-b^2|\le 2X^6$.  Let $p$ be a prime with
$M<p\le\sqrt{2}X^3$. For each integer~$a$ with $p\nmid a$, the number
of solutions~$b$ to the congruence $b^2\equiv a^3\pmod{p^2}$ is
either~$2$ or~$0$, according to whether $a$ is a quadratic residue or
not modulo~$p$, as it follows from Hensel's Lemma that (since $p\nmid
b$) each solution modulo~$p$ lifts uniquely to a solution
modulo~$p^2$.  Since each residue class modulo~$p^2$
has~$2X^3/p^2+O(1)$ representatives~$b$ in the interval~$[-X^3,X^3]$,
the number of pairs $(a,b)$ to be counted (for each~$a$)
is~$4X^3/p^2+O(1)$, or zero.  Hence the cardinality of the set above
is at most
\[
\sum_{M<p\le\sqrt{2}X^3}(2X^2)(4X^3/p^2+O(1)).
\]
The main term is
\[
8X^5\sum_{M<p\le\sqrt{2}X^3}(1/p^2),
\]
which after dividing by~$4X^5$ and letting $X\to\infty$ is
$2\sum_{p>M}1/p^2$, which tends to~$0$ as~$M\to\infty$ as required.

Each of the remaining terms is of size $O(X^2)$, and the number of
terms is at most~$\pi(\sqrt{2}X^3)=O(X^3/\log X)$ by the Prime Number
Theorem, so their sum is~$O(X^5/\log X)$.  Dividing by~$4X^5$ and
letting $X\to\infty$, we see that the contribution of these error
terms is negligible.

If the weights are $(k_1,k_2)$ with $k_1/k_2>2/3$, then the total
contribution of the error terms in the last part of the proof is no
longer negligible.
\end{example}

\begin{rmk}
Although the proof we have given here for the density of square-free
values of $a^3-b^2$ does not work with equal weights, the result also
holds in this case, but the proof is considerably deeper.  We are
grateful to Manjul Bhargava for explaining this to us.

Instead of $a^3-b^2$ we consider square-free values of~$-4 a^{3} - 27
b^{2}$, the discriminant of the cubic~$x^{3} + a x + b$.  Embed the
space of such cubics with integer coefficients into the larger space
of all binary cubic forms over~$\Z$, on which $\GL_2(\Z)$ acts,
leaving the discriminant invariant.  In this larger space, ordering
cubic forms by their height (the maximum absolute value of the
coefficients), one can show that the density of those with square-free
discriminant is the expected product of local densities, by showing
that the associated tail densities tend to zero.
Finally, the number of solutions to the Thue equation $F(x,y)=1$ for a
binary cubic form~$F$ over~$\Z$ is bounded by~$10$ (Evertse gave the
bound~$12$ in 1983 in \cite{evertse1983representation}, and this was
improved to~$10$ by Bennett in 2001 in
\cite{bennett2001representation}).  Hence each~$\GL_2(\Z)$-orbit of
binary cubic forms contains at most~$10$ with leading coefficient~$1$,
and possibly fewer with coefficients of the form~$1,0,a,b$, so the tail
density estimates for binary cubic forms also apply to square-free
discriminants of cubic polynomials~$x^3+ax+b$.
\end{rmk}

\section{Global densities for elliptic curves}\label{sec:global}
We now apply the results of the previous section in dimension~$d=5$,
together with the local densities determined in
Section~\ref{sec:localI}, to determine global densities of integral
Weierstrass equations satisfying certain combinations of local
conditions.

Recall from Section~\ref{sec:localI} that $\W(\Z)=\Z^5$ is the space
of all Weierstrass equations with coefficients in~$\Z$, and now
consider elliptic curves over $\Q$ defined by long integral
Weierstrass equations $E_{\a}$ for $\a = (a_1,a_2,a_3,a_4,a_6) \in
\W(\Z)$.  In common with other work on density results for elliptic
curves, we use weighted densities with weights
\[
  \uk = (1/12,2/12,3/12,4/12,6/12) = (1/12,1/6,1/4,1/3,1/2),
\]
so for $X>0$ we define
\[
\E(X)=\{\a\in\W(\Z)\mid |a_i|\le X^{i/12}\ \text{for $i=1,2,3,4,6$}\}.
\]
We have $\#\E(X)\sim 32X^{4/3}$, as the sum of the weights is~$4/3$.

For any subset $U\subseteq\W(\Z)$ recall that the (weighted) density
$\rho^{\uk}(U)$ of~$U$ was defined
(see~\eqref{eqn:def-weighted-density}) as
\[
 \rho^{\uk}(U) = \lim_{X\to\infty}\frac{\#\E(X)\cap U}{\#\E(X)}.
\]
By homogeneity, the density is unchanged if we use the weight
vector~$(1,2,3,4,6)$ instead of $(1/12,1/6,1/4,1/3,1/2)$, as we did in
the Introduction \eqref{eqn:def-density-intro}.  Since the weight
vector will remain fixed throughout this section, we simplify notation
by writing $\rho(U)$ for $\rho^{\uk}(U)$ in what follows. However,
apart from the result about square-free discriminants
(Theorem~\ref{thm:square-free-disc}), it is not hard to see that the
results of this section are independent of the weights, since we
impose no condition at the infinite place and otherwise rely only on
Proposition~\ref{prop:PS-Lemma21}.

\subsection{Global densities with a condition at a single prime}

Fix a prime $p$. For each local type $T(p)$ of elliptic curves over
$\Q_p$, let $\W_{T(p)}(\Z)=\W_{T(p)}(\Z_p)\cap\W(\Z)$ and
$\W_{T(p)}^M(\Z)=\W_{T(p)}^M(\Z_p)\cap\W(\Z)$.
The \emph{global density} $\rho_{T(p)}^{\Z}$ of type $T(p)$ can now be
defined as the density of~$\W_{T(p)}(\Z)$.  There are two versions, the
second one including the minimality condition at $p$.
\begin{defn}\label{def:rhoTU}
Set $\rho_{T(p)}^{\Z} = \rho(\W_{T(p)}(\Z))$ and $\rho_{T(p)}^{\Z
  M} = \rho(\W_{T(p)}^M(\Z))$.
\end{defn}
In words, $\rho_{T(p)}^{\Z}$ is the density of the set of integral
Weierstrass equations defining elliptic curves with reduction
type~$T(p)$ at the prime~$p$, while $\rho_{T(p)}^{\Z M}$ is the
density of the set of integral Weierstrass equations which are minimal
at the prime~$p$ and models of elliptic curves with reduction
type~$T(p)$.

These global densities are equal to the corresponding $p$-adic
densities, both with and without the minimal condition at~$p$:

\begin{thm}\label{thm:rhoTU}
Let $T(p)$ be one of the finite $p$-adic types (as listed in
Proposition~\ref{prop:TAtable}, depending only on $\a$ modulo~$p^6$).  Then
\[
  \rho_{T(p)}^{\Z} = \rho_{T(p)}
\]
  and
\[
  \rho_{T(p)}^{\Z M} = \rho_{T(p)}^M.
\]
\end{thm}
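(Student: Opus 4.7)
The plan is to deduce both equalities as direct applications of Proposition~\ref{prop:finiteUp}, taking $d=5$, $S=\{p\}$, and the single local factor $U_p$ equal to $\W_{T(p)}^M(\Z_p)$ (for the minimal statement) or $\W_{T(p)}(\Z_p)$ (for the non-minimal statement). Under the hypothesis that $\mu(\partial U_p)=0$, that proposition gives
\[
\rho^{\uk}(U_p \cap \Z^5) = \mu(U_p),
\]
whose two sides are by definition the quantities in the claimed equalities. So everything reduces to verifying the boundary condition in the two cases.

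For the minimal case this is immediate from Proposition~\ref{prop:mod6}: $\W_{T(p)}^M(\Z_p)$ is a union of full residue classes modulo $p^6$, hence simultaneously open and closed in $\Z_p^5$, so its topological boundary is empty.

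For the non-minimal case I would use the level decomposition $\W_{T(p)}(\Z_p) = \bigsqcup_{k\ge 0}\W_{T(p),k}$. By Corollary~\ref{cor:NTk}, each stratum $\W_{T(p),k}$ is, away from the singular locus, a union of residue classes modulo $p^{6(k+1)}$ and is therefore clopen in $\Z_p^5$. Consequently every non-singular $\a\in\W_{T(p)}(\Z_p)$ has a $p$-adic neighborhood contained in $\W_{T(p)}(\Z_p)$; applying the same fact to the strata of the other finite types shows that every non-singular $\a\notin\W_{T(p)}(\Z_p)$ similarly has a neighborhood avoiding $\W_{T(p)}(\Z_p)$. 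Therefore $\partial\W_{T(p)}(\Z_p)$ lies in the zero locus $\{\a\mid\Delta(\a)=0\}$, which is cut out by a single nonzero polynomial and so has $p$-adic measure zero.

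The only delicate point is the non-minimal case: because scaling means $\W_{T(p)}(\Z_p)$ is not a finite union of cosets modulo any fixed power of $p$, one cannot feed it directly into Proposition~\ref{prop:modMdensity} as in the minimal case. The stratification by level, combined with Corollary~\ref{cor:NTk}, is exactly what bypasses this obstruction, and beyond it the argument is routine.
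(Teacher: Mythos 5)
Your proof is correct and uses the same overall framework as the paper --- reducing the theorem to the general density results of Section~\ref{sec:general-density} --- but the two arguments diverge in how they handle the two halves of the statement, and you supply a detail the paper leaves implicit. For the minimal case, the paper appeals directly to Proposition~\ref{prop:modMdensity}: since $\W_T^M(\Z)$ is cut out by congruence conditions modulo $p^6$ (Proposition~\ref{prop:mod6}), that proposition applies with no boundary hypothesis to check, and the clopen-ness you observe is folded into the choice of tool. For the non-minimal case, the paper cites Proposition~\ref{prop:finiteUp} with $U_p=\W_T(\Z_p)$ but does not explicitly verify the hypothesis $\mu(\partial U_p)=0$ (it is a standing assumption stated before the proposition). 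Your level-stratification argument supplies exactly this missing verification: combining Corollary~\ref{cor:NTk} with the fact that the finite types partition the non-singular locus, you show each non-singular point has a $p$-adic neighbourhood either contained in or disjoint from $\W_T(\Z_p)$, hence $\partial\W_T(\Z_p)$ lies inside the measure-zero set $\{\a\mid\Delta(\a)=0\}$. That is a genuine, if routine, addition which makes the appeal to Proposition~\ref{prop:finiteUp} airtight; otherwise the proofs are essentially the same.
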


\begin{proof}
Write $T=T(p)$.  The first statement follows from
Proposition~\ref{prop:finiteUp}, with $U_p=\W_T(\Z_p)$ and
$U_q=\W(\Z_q)$ for all primes~$q\not=p$.

By definition we have $\rho_{T}^{\Z M} = \rho(\W_{T}^M(\Z))$, and the
latter is equal to~$\rho_{T}^M$ by Proposition~\ref{prop:modMdensity}
with modulus~$p^{6}$, giving the second statement.
\end{proof}

\begin{example}
The density of elliptic curves over~$\Q$ with good reduction at~$2$
(with no restrictions at any other primes) is
\[
   (1-2^{-1})/(1-2^{-10}) = 2^9/(2^{10}-1) = 512/1023 \approx 50.0\%.
\]
\end{example}

\begin{example}
The density of elliptic curves over~$\Q$ with additive reduction of
type $\III^*$ at~$5$ (with no restrictions at any other primes) is
\[
   (5^2-5)/(5^{10}-1) = 1/406091.
\]
\end{example}

\subsection{Global densities with conditions at finitely many primes}

Let $S$ be a finite set of primes, and for each $p\in S$ fix a finite
reduction type~$T(p)$.  Applying Proposition~\ref{prop:muWk} with
Proposition~\ref{prop:modMdensity} and Proposition~\ref{prop:finiteUp}
we immediately obtain the following.
\begin{thm}\label{thm:finite-global}
Let $S$ be any finite set of primes, and for each $p\in S$ let $T(p)$
be a finite reduction type.
\begin{enumerate}
\item
The density of integral Weierstrass equations which for all~$p\in S$
are minimal at~$p$ with reduction type~$T(p)$ is $\prod_{p\in
  S}\rho_{T(p)}^M$.
\item
The density of elliptic curves over~$\Q$ whose reduction type at~$p$
is~$T(p)$ for all~$p\in S$ is $\prod_{p\in S}\rho_{T(p)}$.
\end{enumerate}
\end{thm}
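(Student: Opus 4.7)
The plan is to deduce both parts from Proposition~\ref{prop:finiteUp}, which gives a product formula for the weighted density of a subset of $\W(\Z)$ cut out by finitely many $p$-adic conditions, provided each specified local set has boundary of measure zero. Once this boundary condition is verified in each case, the result is immediate from the local measures computed in Section~\ref{sec:localI}.

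For part~(1), set $U_p=\W_{T(p)}^M(\Z_p)$ for each $p\in S$, so that $\mu(U_p)=\rho_{T(p)}^M$ by Definition~\ref{def:rhoTM}. By Proposition~\ref{prop:mod6}, membership of $\a$ in $\W_{T(p)}^M$ depends only on the class of $\a$ modulo~$p^6$, so $U_p$ is a finite disjoint union of cosets of $(p^6\Z_p)^5$ in $\Z_p^5$. Each such coset is both open and closed, whence $U_p$ is clopen in $\W(\Z_p)$ with empty topological boundary. The set of integral Weierstrass equations which are minimal of type $T(p)$ at every $p\in S$ is precisely $(\prod_{p\in S}U_p)\cap\W(\Z)$, with the remaining factors $U_q=\W(\Z_q)$ for $q\notin S$. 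Proposition~\ref{prop:finiteUp} then gives its density as $\prod_{p\in S}\mu(U_p)=\prod_{p\in S}\rho_{T(p)}^M$, as required.

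For part~(2), take $U_p=\W_{T(p)}(\Z_p)$, which allows non-minimal models of curves of type $T(p)$ and satisfies $\mu(U_p)=\rho_{T(p)}$ by Definition~\ref{def:rhoT}. The subtlety here is the boundary condition, since $\W_{T(p)}$ is not determined by congruences modulo any fixed power of~$p$. Decomposing by level, $\W_{T(p)}=\bigsqcup_{k\ge0}\W_{T(p),k}$, each piece $\W_{T(p),k}$ is a union of cosets modulo $p^{6(k+1)}$ by Corollary~\ref{cor:NTk} and hence clopen in $\W(\Z_p)$, so $\W_{T(p)}$ itself is open. A point in $\overline{U_p}\setminus U_p$ must be a limit of points $\a_n\in\W_{T(p),k_n}$ with $k_n\to\infty$; since $\ordp(\Delta(\a_n))\ge 12k_n$ and $\Delta$ is a polynomial, continuity forces $\Delta(\a)=0$. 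Thus $\partial U_p$ is contained in the singular locus $\{\a\in\W(\Z_p)\mid\Delta(\a)=0\}$, which has measure zero because $\Delta$ is a nonzero polynomial. Proposition~\ref{prop:finiteUp} then yields the density $\prod_{p\in S}\rho_{T(p)}$.

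The only genuinely nontrivial point is the verification that $\mu(\partial U_p)=0$ in part~(2); the level decomposition of Corollary~\ref{cor:NTk}, together with the fact that arbitrarily high level forces the discriminant to vanish, is exactly what is needed to reduce to the $\Delta=0$ locus. Everything else is a direct combination of the local densities of Section~\ref{sec:localI} with the finite-prime product formula.
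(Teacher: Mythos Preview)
Your proof is correct and follows the same route as the paper, which simply cites Propositions~\ref{prop:muWk}, \ref{prop:modMdensity}, and~\ref{prop:finiteUp} in a single line. Your contribution is to make explicit the boundary verification in part~(2): the paper glosses over why Proposition~\ref{prop:finiteUp} applies to the non-clopen sets $\W_{T(p)}(\Z_p)$, whereas your level-decomposition argument (unbounded level forces $\Delta=0$, so $\partial U_p$ lies in the measure-zero singular locus) fills this gap cleanly.
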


\begin{example}
The density of elliptic curves over~$\Q$ with good reduction at
both~$2$ and~$3$ (with no restrictions at any other primes) is
\[
    2^9(2-1)3^9(3-1)/(2^{10}-1)(3^{10}-1) = 839808/2516921 \approx 33.37\%.
\]
\end{example}

\begin{example}
Let $p_1$, $p_2$ and $p_3$ be distinct primes.  The density of
elliptic curves over~$\Q$ with good reduction at~$p_1$, multiplicative
reduction at~$p_2$ and additive reduction at~$p_3$ (with no restrictions at any other primes) is
\[
   \left(\frac{1-p_1^{-1}}{1-p_1^{-10}}\right)\left(\frac{p_2^{-1}-p_2^{-2}}{1-p_2^{-10}}\right)\left(\frac{p_3^{-2}-p_3^{-10}}{1-p_3^{-10}}\right).
\]
\end{example}

\subsection{Global densities with conditions at infinitely many primes}

To obtain density results with conditions at infinitely many primes,
we may use Proposition~\ref{prop:PS-Lemma20}, provided that the
excluded sets $U_p\subset\W(\Z_p)$ form an admissible family.  The
previous subsection dealt with the simplest case where almost all
$U_p$ were empty.

Recall the standard invariants $c_4,c_6\in\Z[a_1,a_2,a_3,a_4,a_6]$ of
a Weierstrass model $E_{\a}$.  As elements
of~$\Z[a_1,a_2,a_3,a_4,a_6]$, they are both irreducible (being linear
in $a_4$ and $a_6$ respectively), and coprime.  The results in this
subsection follow from the following.

\begin{lem}\label{lem:c4c6}
  For each prime~$p$, define $U_p\subseteq\W(\Z_p)$ by
\[
U_p = \{\a\in\W(\Z_p)\mid c_4(\a)\equiv c_6(\a)\equiv0\pmod{p}\}.
\]
Then the family $\U=\prod U_p$ is admissible.
\end{lem}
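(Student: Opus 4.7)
The plan is to apply Proposition~\ref{prop:PS-Lemma21} directly, taking $d = 5$, $f = c_4$, and $g = c_6$ as polynomials in $\Z[a_1, a_2, a_3, a_4, a_6]$. That proposition says precisely that if two such polynomials are coprime, then the family $\U = \prod U_p$ they cut out at each prime is $\uk$-admissible for every positive weight vector~$\uk$, so in particular for the weights $(1,2,3,4,6)$ fixed at the start of Section~\ref{sec:global}. Thus the entire argument reduces to verifying coprimality.

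To carry this out, I would use the standard formulas $b_2 = a_1^2 + 4a_2$, $b_4 = 2 a_4 + a_1 a_3$, $b_6 = a_3^2 + 4 a_6$, $c_4 = b_2^2 - 24 b_4$, and $c_6 = -b_2^3 + 36 b_2 b_4 - 216 b_6$. From these one reads off that $c_4$ is linear in $a_4$ with constant leading coefficient $-48$ and does not involve $a_6$ at all, while $c_6$ is linear in $a_6$ with constant leading coefficient $-864$. Each is therefore irreducible in $\Z[a_1,a_2,a_3,a_4,a_6]$, and they are manifestly distinct irreducibles (one uses $a_4$ but not $a_6$; the other the reverse), so they are coprime.

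There is no genuine obstacle here: the coprimality observation is already recorded in the paragraph immediately preceding the lemma, and Proposition~\ref{prop:PS-Lemma21} then closes the argument verbatim. The only delicate input lies deeper, inside the proof of that proposition, which depends on the codimension-$2$ quantitative estimate of Bhargava \cite[Theorem~3.3]{Bhargava-sieve} extended to the weighted case; but this is a black box established in the previous section, and I would not need to reopen it here.
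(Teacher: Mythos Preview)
Your proposal is correct and follows exactly the paper's approach: the paper's proof is the single sentence ``Since $c_4$ and~$c_6$ are coprime, we may apply Proposition~\ref{prop:PS-Lemma21},'' and you have simply unpacked the coprimality verification (already noted in the paragraph before the lemma) in more detail. One tiny inaccuracy of phrasing: $c_6$ does involve $a_4$ (via the $b_2b_4$ term), so ``the other the reverse'' is not quite right---but what matters, and what you in fact use, is that $c_4$ omits $a_6$ while $c_6$ does not, so the two irreducibles are distinct.
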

\begin{proof}
Since $c_4$ and~$c_6$ are coprime, we may apply
Proposition~\ref{prop:PS-Lemma21}.
\end{proof}

For $p\ge5$, the condition $c_4\equiv c_6\equiv0\pmod{p}$ is equivalent
to the Weierstrass model~$\Ea$ being non-minimal or of bad additive
reduction\footnote{Note that for $p=2$ and $p=3$ one can have good
  reduction when $p\mid c_4$ and $p\mid c_6$, for example
  \lmfdbec{11}{a}{1} for $p=2$ and \lmfdbec{17}{a}{1} for $p=3$.
  Also, $c_4$ and $c_6$ are not coprime as polynomials over~$\F_2$ or
  $\F_3$.}, so, for $p\ge5$, we have $U_p=U_p'$ where
\[
   U_p' = \W(\Z_p) \setminus \left(\W_{\I_0}^M \cup
   \W_{\I_{\ge1}}^M\right).
\]
Since $\mu(U_p') = 1- (\rho_{I_0}^M + \rho_{I_{\ge1}}^M) = 1/p^2$ for
all primes~$p$, it follows that $\mu(U_p)=1/p^2$ for all~$p\ge5$.
One may also check that $\mu(U_p)=1/p$ for $p=2,3$, using $c_4\equiv
a_1^4$ and $c_6\equiv a_1^6\pmod{2}$, and $c_4\equiv (a_1^2+a_2)^2$
and $c_6\equiv -(a_1^2+a_2)^3\pmod{3}$, but we will not need these
values.

Recall that an elliptic curve is called \emph{semistable at a
  prime~$p$} if its reduction type is either good (type~$\I_0$) or
multiplicative (type~$\I_{\ge1}$), and~\emph{semistable} if it is
semistable at all primes.

\begin{thm}\label{thm:global-semistable}
  \
  \begin{enumerate}
\item
The density of integral Weierstrass equations which are minimal models
of semistable elliptic curves is $1/\zeta(2)\approx 60.79\%$.
\item
The density of semistable elliptic curves over~$\Q$ is
$\zeta(10)/\zeta(2)\approx 60.85\%$.
\end{enumerate}
\end{thm}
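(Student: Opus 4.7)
The plan is to apply Proposition~\ref{prop:PS-Lemma20} separately to each part, with a suitable family of excluded local sets. For part~(1), I would define
\[V_p = \W(\Z_p) \setminus \left(\W_{\I_0}^M(\Z_p) \cup \W_{\I_{\geq 1}}^M(\Z_p)\right),\]
so that $\a \in \W(\Z)$ is a minimal model of a semistable elliptic curve if and only if $\a \notin V_p$ for every prime~$p$. For part~(2), I would take instead
\[V_p' = \W(\Z_p) \setminus \left(\W_{\I_0}(\Z_p) \cup \W_{\I_{\geq 1}}(\Z_p)\right),\]
so that $\a$ defines a semistable curve over~$\Q$ (with no minimality requirement) precisely when it avoids every $V_p'$. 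In each case the desired density equals $\rho^{\uk}(\prod_p V_p,\emptyset)$ or $\rho^{\uk}(\prod_p V_p',\emptyset)$ respectively, so once admissibility is in hand, Proposition~\ref{prop:PS-Lemma20} reduces the proof to a computation of the local measures.

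The main obstacle is the admissibility of these families. For $p \geq 5$, both non-minimality and additive reduction force $p \mid c_4(\a)$ and $p \mid c_6(\a)$ (for non-minimality via scaling; for additive reduction since the singular fibre has a cusp, equivalent over $\F_p$ with $p\ge 5$ to $p\mid c_4$ and $p\mid \Delta$, which forces $p\mid c_6$ via $1728\Delta = c_4^3 - c_6^2$). Hence $V_p' \subseteq V_p \subseteq U_p$ for all $p \geq 5$, where $U_p$ is the set from Lemma~\ref{lem:c4c6}. Since that lemma (via Proposition~\ref{prop:PS-Lemma21}) provides admissibility of $\{U_p\}$, Lemma~\ref{lem:admiss}(2) transfers admissibility to the truncated families $\{V_p\}_{p\ge5}$ and $\{V_p'\}_{p\ge5}$, and Lemma~\ref{lem:admiss}(1) then allows reinsertion of the actual sets at $p = 2, 3$ without affecting admissibility. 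The subtle point is that at $p = 2, 3$ the $c_4, c_6$ divisibility criterion does not cleanly detect the reduction type (see the footnote following Lemma~\ref{lem:c4c6}), so the argument must separate off these small primes and only invoke the codimension-two sieve at $p\ge 5$.

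Granted admissibility, the local computations are immediate from Proposition~\ref{prop:TAtable}:
\[\mu(V_p) = 1 - \rho_{\I_0}^M - \rho_{\I_{\geq 1}}^M = 1 - \frac{p-1}{p} - \frac{p-1}{p^2} = \frac{1}{p^2},\]
valid uniformly in $p$, so part~(1) yields $\prod_p (1 - 1/p^2) = 1/\zeta(2)$. For part~(2), Proposition~\ref{prop:rhoTM} converts the minimal densities into $\rho_{\I_0} + \rho_{\I_{\geq 1}} = (1 - 1/p^2)/(1 - 1/p^{10})$, so
\[1 - \mu(V_p') = \frac{1 - 1/p^2}{1 - 1/p^{10}},\]
and taking the product over all primes gives $\zeta(10)/\zeta(2)$ as claimed.
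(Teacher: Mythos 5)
Your proposal is correct and mirrors the paper's own argument almost exactly: your sets $V_p$ and $V_p'$ are the paper's $U_p'$ and $U_p''$, the admissibility is obtained in the same way via Lemma~\ref{lem:c4c6} (Proposition~\ref{prop:PS-Lemma21}) together with Lemma~\ref{lem:admiss}, and the local computations are identical. The only cosmetic difference is that you truncate at $p\ge5$ and then reinsert the sets at $p=2,3$ via Lemma~\ref{lem:admiss}(1), whereas the paper observes more directly that $U_p'=U_p$ for $p\ge5$ and $U_p''\subseteq U_p$ for all~$p$, but the content is the same.
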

\begin{proof}
Let $\U=\prod U_p$ and~$\U'=\prod U_p'$ be as above. Since $\U$ is
admissible by Lemma~\ref{prop:PS-Lemma21}, so is $\U'$ by
Lemma~\ref{lem:admiss}.  Also, $\mu(\U_p')=1/p^2$ for all~$p$, by
Proposition~\ref{prop:TAtable} (as noted above).  Taking $S=\emptyset$
in Proposition~\ref{prop:PS-Lemma20} gives the density stated, since
$\prod_p(1-1/p^2)=1/\zeta(2)$.

For the second part we let $U_p''$ be the set of Weierstrass models of
curves with additive reduction.  This is a subset of~$U_p$, since
$U_p$ includes not only these models but also non-minimal models of
curves with good or multiplicative reduction.  Now the local density
of curves with good or multiplicative reduction is
$(1-p^{-2})/(1-p^{-10})$, so $\mu(U_p'') = 1 -
(1-p^{-2})/(1-p^{-10})$.  Applying Proposition~\ref{prop:PS-Lemma20}
again yields the desired density as $\prod_p(1-\mu(U_p'')) =
\prod_p(1-p^{-2})/(1-p^{-10}) = \zeta(10)/\zeta(2)$.
\end{proof}

We can obtain further global density results by changing the local
conditions at any finite set of primes, provided that we know the
associated local densities.  The only constraint on results provable
in this way is therefore that, at all but finitely many primes, the
condition we impose is that of semistability, \textit{i.e.}, good or
multiplicative reduction.  As in the two parts of
Theorem~\ref{thm:global-semistable}, if we also impose conditions of
minimality at all primes, this will not affect the convergence
criteria, merely dividing the global density by
$\prod_p(1-p^{-10})^{-1}=\zeta(10) = \pi^{10}/93555 \approx
1.000994575$.  This establishes the following.

\begin{thm}\label{thm:global-semistable+}
Let $S$ be any finite set of primes, and for each $p\in S$ let $T(p)$
be a finite reduction type.
\begin{enumerate}
\item
The density of integral Weierstrass equations which are global minimal
models of elliptic curves over~$\Q$ whose reduction type at~$p$
is~$T(p)$ for all~$p\in S$, and which are semistable at all other
primes, is
\[
   \zeta(2)^{-1} \prod_{p\in S}\rho_{T(p)}/(1-p^{-2}).
\]
\item
The density of elliptic curves over~$\Q$ whose reduction type at~$p$
is~$T(p)$ for all~$p\in S$ and which are semistable at all other
primes is
\[
   \zeta(10)\zeta(2)^{-1} \prod_{p\in S}\rho_{T(p)}/(1-p^{-2}).
\]
\end{enumerate}
\end{thm}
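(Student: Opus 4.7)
The plan is to deduce both parts from the Ekedahl-style sieve of Section~\ref{sec:general-density}, building on the admissible family used in the proof of Theorem~\ref{thm:global-semistable} and modifying it at the finitely many primes of $S$ via Lemma~\ref{lem:admiss}(1).

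For part~(1), I would, for each prime $p$, define a local ``bad'' set $\tilde U_p \subseteq \W(\Z_p)$ whose complement encodes the desired minimality-plus-type condition: set $\tilde U_p = \W(\Z_p) \setminus (\W_{\I_0}^M \cup \W_{\I_{\ge1}}^M)$ for $p \notin S$, and $\tilde U_p = \W(\Z_p) \setminus \W_{T(p)}^M(\Z_p)$ for $p \in S$. The first alternative is precisely the set $U_p'$ appearing in the proof of Theorem~\ref{thm:global-semistable}, so $\tilde\U = \prod_p \tilde U_p$ differs from the admissible family $\U'$ at only finitely many primes and is admissible by Lemma~\ref{lem:admiss}(1). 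Proposition~\ref{prop:TAtable} gives $\mu(\tilde U_p) = p^{-2}$ for $p \notin S$ and $\mu(\tilde U_p) = 1 - \rho_{T(p)}^M$ for $p \in S$, so Proposition~\ref{prop:PS-Lemma20} with empty distinguished set yields the density
\[
\prod_p \bigl(1 - \mu(\tilde U_p)\bigr)
= \prod_{p \notin S}(1 - p^{-2}) \cdot \prod_{p \in S}\rho_{T(p)}^M
= \zeta(2)^{-1} \prod_{p \in S}\frac{\rho_{T(p)}^M}{1 - p^{-2}},
\]
which is the stated formula after using Proposition~\ref{prop:rhoTM} to rewrite $\rho_{T(p)}^M = (1-p^{-10})\rho_{T(p)}$ as desired.

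For part~(2), the same scheme applies, but one now works at the level of the elliptic curve rather than the model: replace $\tilde U_p$ at $p \notin S$ by $U_p'' = \W(\Z_p) \setminus \{\a : \Ea\text{ is semistable at }p\}$, of measure $1 - (1-p^{-2})/(1-p^{-10})$, and at $p \in S$ by $\W(\Z_p) \setminus \W_{T(p)}(\Z_p)$, of measure $1 - \rho_{T(p)}$. The key point is admissibility: I would observe that additive reduction at any prime forces $p \mid c_4$ and $p \mid c_6$, and the same holds for all non-minimal $\a$ (since then $p^4 \mid c_4$, $p^6 \mid c_6$), whence $U_p'' \subseteq U_p$ for the set $U_p$ of Lemma~\ref{lem:c4c6}; Lemma~\ref{lem:admiss}(2) then transfers admissibility from $\U$ to $\U''$, and modification at $p \in S$ preserves it by Lemma~\ref{lem:admiss}(1). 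Applying Proposition~\ref{prop:PS-Lemma20} and rearranging via $\prod_{p \notin S}(1-p^{-2})/(1-p^{-10}) = (\zeta(10)/\zeta(2)) \prod_{p \in S}(1-p^{-10})/(1-p^{-2})$ delivers the formula.

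There is essentially no substantial obstacle: the heavy machinery is already in place in Proposition~\ref{prop:PS-Lemma20}, Lemma~\ref{lem:c4c6}, and Proposition~\ref{prop:TAtable}, and all that remains is bookkeeping. The one mildly delicate verification is the inclusion $U_p'' \subseteq U_p$ at $p = 2,3$, where $c_4$ and $c_6$ fail to be coprime modulo~$p$; but the additive-reduction criterion ``$p \mid c_4$ and $p \mid c_6$'' is unconditional on the residue characteristic, so the inclusion is immediate in all cases.
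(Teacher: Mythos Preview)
Your approach is correct and is essentially the paper's: build on the admissible family from Theorem~\ref{thm:global-semistable}, modify it at the finitely many primes of~$S$ via Lemma~\ref{lem:admiss}(1), and read off the product formula from Proposition~\ref{prop:PS-Lemma20}. Your check that $U_p''\subseteq U_p$ via the additive-reduction criterion on $c_4,c_6$ is valid for all primes, including $2$ and~$3$.

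There is one bookkeeping slip at the end of each part. In part~(1) your computation correctly yields
\[
\zeta(2)^{-1}\prod_{p\in S}\frac{\rho_{T(p)}^M}{1-p^{-2}},
\]
and substituting $\rho_{T(p)}^M=(1-p^{-10})\rho_{T(p)}$ via Proposition~\ref{prop:rhoTM} leaves an extra factor $\prod_{p\in S}(1-p^{-10})$ which does \emph{not} cancel; the same happens in part~(2). So your phrase ``which is the stated formula'' (and ``delivers the formula'') is not justified. In fact the discrepancy is on the paper's side: the printed formulas should carry $\rho_{T(p)}^M$ rather than $\rho_{T(p)}$, as one confirms by specialising to $T(p)=\I_0$ and comparing with Theorem~\ref{thm:main-global}, where $\rho_{\I_0}^M/(1-p^{-2})=p/(p+1)$ matches but $\rho_{\I_0}/(1-p^{-2})$ does not. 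State the formula you actually derive, and note the apparent typo, rather than asserting a match you have not established.
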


\subsection{Curves with square-free discriminant}
A Weierstrass equation has square-free discriminant if and only if it
is minimal and of reduction type $\I_0$ or $\I_1$.  These have local
density $1-1/p$ and $(p-1)^2/p^3$, by Propositions~\ref{prop:TAtable}
and~\ref{prop:typesIm} respectively, so the local density of those
with square-free discriminant is~$1-{2}/{p^2}+{1}/{p^3}$.  Hence the
set $U_p$ of Weierstrass equations with discriminant divisible
by~$p^2$ has local density $2/p^2-1/p^3$.  By comparison with the case
of square-free discriminants of monic cubic polynomials (see
Example~\ref{ex:sqfree3disc} in the previous section), we expect
$\U=\prod U_p$ to be admissible.  This is indeed the case, provided
that we use appropriate weights, as specified at the start of this
section.

\begin{thm}\label{thm:square-free-disc}
\begin{enumerate}
\item
The density of integral Weierstrass equations whose discriminant is
square-free is
\[
   \prod_{p}\left(1-\frac{2}{p^2}+\frac{1}{p^3}\right) \approx 42.89\%.
\]
\item
The density of elliptic curves over~$\Q$ whose minimal discriminant is
square-free is
\[
   \zeta(10)\prod_{p}\left(1-\frac{2}{p^2}+\frac{1}{p^3}\right) \approx 42.93\%.
\]
\end{enumerate}
\end{thm}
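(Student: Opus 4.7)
My plan is to apply Proposition~\ref{prop:PS-Lemma20} to the family $\U=\prod_p U_p$, where
\[
U_p=\{\a\in\W(\Z_p)\mid p^2\mid\Delta(\a)\}.
\]
The local measure $\mu(U_p)=2/p^2-1/p^3$ is already computed in the discussion preceding the theorem statement. Once $\uk$-admissibility of $\U$ is established (with weights $\uk=(1,2,3,4,6)$), Proposition~\ref{prop:PS-Lemma20} will immediately give $\rho(\U,\emptyset)=\prod_p(1-2/p^2+1/p^3)$, proving Part~(1). For Part~(2), I would introduce the auxiliary family of $\a\in\W(\Z_p)$ whose minimal model satisfies $p^2\mid\Delta_{\min}$; this family is contained in $U_p$ (a non-minimal $\a$ has $p^{12}\mid\Delta(\a)$, while a minimal member of it has $\Delta(\a)=\Delta_{\min}$), so its admissibility follows from that of $\U$ by Lemma~\ref{lem:admiss}~(2). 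Summing the level-density formula from Proposition~\ref{prop:muWk} over all $k\ge0$ then gives its local measure as $1-(1-2/p^2+1/p^3)/(1-p^{-10})$, and Proposition~\ref{prop:PS-Lemma20} yields Part~(2) as $\zeta(10)$ times Part~(1).

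The theorem thus reduces to $\uk$-admissibility of $\U$. Following Example~\ref{ex:sqfree3disc}, I would decompose $U_p=U_p'\cup U_p''$, where
\[
U_p' = \{\a\in U_p\mid p\mid c_4(\a)\text{ and }p\mid c_6(\a)\},\qquad U_p''=U_p\setminus U_p'.
\]
For $p\ge5$, $U_p'$ contains all non-minimal equations and those with additive reduction at~$p$, while $U_p''$ consists of minimal equations of type $\I_m$ for some $m\ge2$, which all satisfy $p\nmid c_4(\a)$. Admissibility of $\prod U_p'$ would be immediate from Lemma~\ref{lem:c4c6} together with Lemma~\ref{lem:admiss}~(2), since $U_p'$ is a subfamily of the one cut out by $p\mid c_4,\ p\mid c_6$.

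The main obstacle is admissibility of $\prod U_p''$, which requires a weighted counting argument closely parallel to the treatment of the ``weakly divisible'' part in Example~\ref{ex:sqfree3disc}. For $p\ge5$ and $\a\in U_p''$ with $|a_i|\le X^i$, the bound $|\Delta(\a)|=O(X^{12})$ combined with $p^2\mid\Delta(\a)$ forces $p\le CX^6$ for some constant~$C$. Fixing $(a_1,a_2,a_3,a_4)$ with $|a_i|\le X^i$ and $p\nmid c_4$, I observe that $\Delta$, viewed as a polynomial in $a_6$, is quadratic with leading coefficient $-432$ (from the term $-27b_6^2$), which is nonzero modulo~$p$ for $p\ge5$; since the reduction is multiplicative, this quadratic has two simple roots modulo $p$, so Hensel's lemma produces at most two residue classes of $a_6\pmod{p^2}$ with $p^2\mid\Delta$, each contributing $O(X^6/p^2)+O(1)$ integers $a_6\in[-X^6,X^6]$. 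Summing over the $O(X^{10})$ choices of $(a_1,a_2,a_3,a_4)$ and then over primes $M<p\le CX^6$ (using the prime number theorem to bound the number of primes in the $O(1)$ error term) should bound the count of $\a\in\bigcup_{p>M}U_p''$ with $|a_i|\le X^i$ by
\[
O\bigl(X^{16}\textstyle\sum_{p>M}1/p^2\bigr)+O(X^{16}/\log X).
\]
Dividing by $\#\E(X)\sim 32X^{16}$ and letting first $X\to\infty$ then $M\to\infty$ will show $\lim_{M\to\infty}\rho_M^{\uk}(\prod U_p'')=0$. The weights $(1,2,3,4,6)$ are essential here: with uniform weights, the prime-number-theorem error term would fail to vanish, exactly as in Example~\ref{ex:sqfree3disc}.
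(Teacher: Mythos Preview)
Your proposal is correct and follows the paper's proof essentially step for step: the same decomposition $U_p=U_p'\cup U_p''$ into the ``strongly divisible'' part (handled via Lemma~\ref{lem:c4c6} and Lemma~\ref{lem:admiss}) and the ``weakly divisible'' part $U_p''=\W_{\I_{\ge2}}^M$, followed by the same weighted counting argument for $U_p''$ using that $\Delta$ is quadratic in~$a_6$. One small refinement worth noting: the paper justifies the simple-root claim by observing directly that the discriminant of $\Delta$, viewed as a quadratic in~$a_6$, is $c_4^3$---so $p\nmid c_4$ immediately gives distinct roots mod~$p$ and hence unique Hensel lifts---which is cleaner than your appeal to the reduction type being multiplicative (a property of the full $5$-tuple, not of the fixed $(a_1,a_2,a_3,a_4)$).
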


For $p\not=2$, the local density $(1-{2}/{p^2}+{1}/{p^3})$ is exactly
the same as that of monic cubic polynomials over $\Z_p$ with
square-free discriminant (see \cite{bhargava2016squarefree} and
\cite[Theorem 6.8]{Ash2007}).  Hence, by \cite[Theorem
  1.1]{bhargava2016squarefree}, the theorem states that the
probability that a random integral Weierstrass equation has
square-free discriminant is (after taking the discrepancy for $p=2$
into account) equal to $5/4$ times the probability that a random monic
integral cubic polynomial has square-free discriminant.

\begin{proof}
The proof follows the argument given in Example~\ref{ex:sqfree3disc}
above, taking the additional variables into account.  We again write
$U_p$ as a disjoint union $U_p=U_p'\cup U_p''$, where
\[
   U_p' = \W(\Z_p) \setminus  \W_{\I_{\ge0}}^M,
\]
is the set of Weierstrass equations with bad additive reduction at~$p$
or non-minimal at~$p$, and
\[
   U_p'' = \W_{\I_{\ge2}}^M
\]
is the set of Weierstrass equations with multiplicative reduction
at~$p$ of Type~$\I_m$ for some~$m\ge2$.  Admissibility of $\U'=\prod
U_p'$ has already been established in the proof of
Theorem~\ref{thm:global-semistable}, so we consider admissibility
of~$\U''=\prod U_p''$.  Ignoring $p=2$ and~$3$, as we may by
Lemma~\ref{lem:admiss}, the condition for belonging to $U_p''$ is that
$p\nmid c_4,c_6$ but $p^2\mid\Delta$, or equivalently $p^2\mid
c_4^3-c_6^2$.  This is a ``mod~$p^2$ condition'', in contrast to
membership of $U_p'$ which is a ``mod~$p$ condition''.

Regarding~$\Delta$ as a polynomial in~$a_6$ with coefficients
in~$\Z[a_1,a_2,a_3,a_4]$, it has degree~$2$ with leading
coefficient~$-432=-2^43^3$ and discriminant~$c_4^3$. (Note that
$c_4\in\Z[a_1,a_2,a_3,a_4]$ does not depend on~$a_6$.) Hence for each
fixed $(a_1,a_2,a_3,a_4)\in\Z^4$ with $p\nmid c_4$, there are at
most~$2$ solutions for $a_6\pmod{p}$ to the
congruence~$\Delta\equiv0\pmod{p}$, each of which lifts to a unique
solution to~$\Delta\equiv0\pmod{p^2}$.

Secondly, each term in~$\Delta$ has weight~$12$ when we give $a_i$
weight~$i$, so for~$\a$ bounded by $|a_i|\le X^{i/12}$, each monomial
appearing in~$\Delta$ is bounded by~$X$, and hence~$|\Delta| \le BX$,
where $B$ is the sum of the absolute values of the coefficients
of~$\Delta$.  (In fact, $B=1714$, but the actual numerical coefficient
is unimportant.)  It follows that if $\a$ satisfies the weighted
bounds, $p^2\mid\Delta$ and $\Delta\not=0$, then $p\le(BX)^{1/2}$.

To compute the tail density~$\rho_M^{\uk}(\U'')$, we must estimate the
cardinality of the set
\[
\bigcup_{p>M}\{\a\in\Z^5: |a_i|\le X^{i/12}, p\nmid c_4, p^2\mid\Delta\},
\]
and we may ignore~$\a$ with $\Delta=0$ as these have zero density.
The $p$th set in this union is empty unless $p\le(BX)^{1/2}$.  For
each $p$ below this bound, the number of
$4$-tuples~$(a_1,a_2,a_3,a_4)$ satisfying the bounds
is~$O(X^{10/12})=O(X^{5/6})$, and each $4$-tuple determines at most
two values of~$a_6\pmod{p^2}$, hence $O(X^{1/2}/p^2)+O(1)$ values
of~$a_6$ also satisfying $|a_6|\le X^{1/2}$.  Adding over all~$p$ with
$M<p\le(BX)^{1/2}$, the main term is
\[
O(X^{5/6})\sum_{M<p\le(BX)^{1/2}}O(X^{1/2}/p^2) =
O(X^{4/3})\sum_{M<p\le(BX)^{1/2}}1/p^2,
\]
contributing at most $\sum_{p>M}1/p^2$ to the tail density.  Each
error term is~$O(X^{5/6})$ and the number of terms is
$O(\pi((BX)^{1/2})) = O(X^{1/2}/\log X)$, so the total error
is~$O(X^{4/3}/\log X)$ which is $o(X^{4/3})$ and hence negligible.

This completes the proof that~$\U''$ is admissible, and the rest of
the statement of the Theorem follows as before.
\end{proof}

\begin{rmk}
  It is perhaps worth noting what are the properties of the
  discriminant polynomial $\Delta(a_1,a_2,a_3,a_4,a_6)$ which ensure
  that the above proof works.

  Firstly, it is \emph{isobaric} with respect to certain positive
  weights of the variables~$a_i$ (meaning that each monomial has the
  same weight).  We use these weights of the variables (scaled
  by~$1/12$ to make the total weight of~$\Delta$ equal to~$1$, but
  that is unimportant) as the weights used to define the density.

  Secondly, we used the fact that $\Delta$ has degree only~$2$ in one
  of the variables, $a_6$.  Careful examination of the proof above
  reveals that, in order to show that the error terms were negligible,
  it was crucial that the exponent $1/2$ on the bound for this
  variable matched the exponent on the bound on~$p$, which in turn
  came from the fact that our condition was that $\Delta$ was
  square-free.

  As with square-free values of the discriminant of a cubic
  polynomial~$x^3+ax+b$, it is possible that
  Theorem~\ref{thm:square-free-disc} also holds using equal weights on
  the coefficients~$a_i$, but we have not tried to prove this.  One
  approach might be to embed the space of Weierstrass cubics in the
  larger set of ternary cubic forms over~$\Z$, for which this result
  is known: see~\cite{bhargava2016squarefree}.

  We would also expect the methods used here to be able to establish
  the density of monic integer quartic polynomials whose discriminant
  is \emph{cube-free}, using the natural weights rather than equal
  weights, since that discriminant has degree~$3$ in the constant
  coefficient, but that determining the density of square-free
  discriminants of quartic (and higher degree) monic integer
  polynomials would be harder; indeed, the methods used
  in~\cite{bhargava2016squarefree} to evaluate this (in arbitrary
  degree) are much deeper.
\end{rmk}

\subsection{Curves with prime-power conductor (or discriminant)}

Finally in this section, we consider elliptic curves with a single
prime of bad reduction.

Fix $X>0$, and consider first elliptic curves with a single prime
$p<X$ of bad multiplicative reduction, good reduction at all other
primes $q<X$, and no restriction at primes $q>X$.  That is, we
consider elliptic curves of conductor $N = pN'$ where $N'$ has no
prime factors less than $X$.  This set has density
\[
\sum_{p\le X}\left((1/p-1/p^2)\prod_{q\le X, q\not=p}(1-1/q)\right)  =
\left(\sum_{p\le X} 1/p\right)  \left(\prod_{q\le X}(1-1/q)\right).
\]
As $X\to\infty$, the first factor $\sum_{p\le X}(1/p) \sim \log\log
X$, while $\prod_{q\le X}(1-1/q) \sim e^{-\gamma}/\log X$, where
$\gamma$ is Euler's constant.  Hence the density is~$O(\log\log X/\log
X)$, and tends to $0$ as $X\to\infty$.

Hence the density of elliptic curves with prime discriminant is also
zero, as these are a subset of those with prime conductor.

A small modification of this argument applies to curves of prime power
conductor (equivalently, prime power discriminant).  For each~$X$, the
set of curves with precisely one prime~$p\le X$ of bad reduction has
density
\[
\sum_{p\le X}\left(1/p \prod_{q\le X, q\not=p}(1-1/q)\right) = \left( \sum_{p\le
  X}1/(p-1)\right) \left( \prod_{q\le X}(1-1/q)\right).
\]
Since $1/(p-1)-1/p=1/p(p-1)$ and $\sum1/p(p-1)$ converges, the
asymptotics are unchanged.

\section{Local densities II}\label{sec:localII}
In this section we extend the local density results of
Section~\ref{sec:localI} to include the distribution of conductor
exponents~$f_p$ and Tamagawa numbers~$c_p$, for each type of
reduction.  Consequent global results may be obtained using the
methods of Sections~\ref{sec:general-density} and~\ref{sec:global}.

The results here are all obtained by following in detail the steps of
Tate's Algorithm, as originally given in~\cite{tate1975algorithm}.
Our methods are similar to those employed by Papadopoulos in
\cite{papadopoulos}, where he establishes congruence conditions on the
Weierstrass coefficients~$a_i$ for each Kodaira reduction type.  As
Papadopoulos observes, for $p\ge5$ the type is completely determined
by the valuations of the invariants $c_4$, $c_6$ and~$\Delta$; for
$p=3$ one can make use of the coefficients $b_i$, while for $p=2$ one
is forced to consider all the $a_i$.  Since the expression for
$\Delta$ as a polynomial in the $a_i$ has~$26$ terms, this would be
tiresome to do by hand, and we use computer algebra to assist us.  The
reader may find \Sage\ code to verify the claims made in this section
at \cite{sagecode}.  The main differences between the results of this
section and those of Papadopoulos are that we quantify each step in
order to find the $p$-adic density of each case, while on the other
hand Papadopoulos works in the more general context of a local field
and not just $\Q_p$ itself.

Throughout, $p$ will denote a fixed prime; in the results and proofs
we often need to consider $p=2$ and $p=3$ separately.

All curves with good reduction at~$p$ have $f_p=0$ and $c_p=1$. It is
well-known that the density of Weierstrass equations which have good
reduction is~$1-1/p$. The first author first learned the following
fact from Hendrik Lenstra (who showed him a different proof from the
one which follows), but as we do not know a suitable reference we
include a proof here.

\begin{lem}\label{lem:sing-count}
Let $q$ be a prime power.  Of the $q^5$ Weierstrass equations
over~$\F_q$, precisely $q^4$ are singular.
\end{lem}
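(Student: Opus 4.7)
The plan is an incidence count. Define
$$S = \{(\a, P) \in \F_q^5 \times \F_q^2 : P \text{ is a singular point of } \Ea\},$$
and compute $|S|$ in two ways.

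First, fix $P = (x_0, y_0) \in \F_q^2$ and count the $\a$'s for which $P$ is a singular point of $\Ea$. The conditions that $P$ lies on $\Ea$ and that both partial derivatives of the defining polynomial vanish at $P$ give three $\F_q$-linear conditions on the coefficients $(a_1,a_2,a_3,a_4,a_6)$. Writing out the $3 \times 5$ coefficient matrix explicitly, the $3 \times 3$ submatrix formed by the columns corresponding to $a_3, a_4, a_6$ is
$$\begin{pmatrix} y_0 & -x_0 & -1 \\ 0 & -1 & 0 \\ 1 & 0 & 0 \end{pmatrix},$$
with determinant $-1$, independent of $P$. Hence the three conditions are always linearly independent, so the set of valid $\a$ is a two-dimensional affine subspace of $\F_q^5$ with exactly $q^2$ elements. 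Summing over the $q^2$ choices of $P$ yields $|S| = q^4$.

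Second, I would show that the projection $S \to \{\a \in \F_q^5 : \Delta(\a)=0\}$ is a bijection, i.e.\ that each singular Weierstrass curve has exactly one $\F_q$-rational singular point. The point at infinity $(0{:}1{:}0)$ is always smooth, as a direct computation of the gradient of the homogenized equation shows, so any singular point is affine. The crucial intermediate claim is that every projective Weierstrass cubic $F(X,Y,Z)$ is absolutely irreducible: its restriction to $Z=0$ equals $-X^3$, so every linear factor of $F$ over $\overline{\F}_q$ must restrict on $Z=0$ to a scalar multiple of $X$ and therefore contain no $Y$ term. A hypothetical factorization $F = L_1 L_2 L_3$ would then contain no $Y$ at all, contradicting the $Y^2 Z$ term of $F$; a factorization $F = L \cdot Q$ with $Q$ an irreducible conic is ruled out in the same way by comparing the coefficient of $Y^2 Z$ on both sides. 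Once absolute irreducibility is established, the plane cubic has arithmetic genus $(d-1)(d-2)/2 = 1$, so its singular locus consists of at most one geometric point; when $\Delta(\a) = 0$ this point exists, and being unique it is fixed by the absolute Galois group of $\F_q$ and hence is $\F_q$-rational.

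Combining the two counts gives $\#\{\a \in \F_q^5 : \Delta(\a) = 0\} = |S| = q^4$. The incidence count and the linear-algebra fibre computation are routine; I expect the main obstacle to be the absolute irreducibility of the Weierstrass cubic, and the uniqueness of the singular point which it yields, although the coefficient-comparison argument above becomes elementary once one exploits the very specific shape of the defining equation.
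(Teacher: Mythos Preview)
Your proof is correct and follows essentially the same incidence-count structure as the paper: count pairs $(\a,P)$ with $P$ singular on $E_{\a}$, show each affine point is singular for exactly $q^2$ equations, and show each singular equation has a unique singular point. The paper streamlines both sub-steps---it uses the translation action to reduce the fibre count to the case $P=(0,0)$ (where the conditions are simply $a_3=a_4=a_6=0$), and it obtains uniqueness of the singular point from B\'ezout applied to an irreducible cubic rather than via the arithmetic-genus formula---but the underlying argument is the same.
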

\begin{proof}
Weierstrass equations define irreducible cubic curves, and by Bezout's
Theorem, they can have at most one singular point, which is not the
unique point at infinity, and hence is one of the $q^2$ points in the
affine plane.  For each of these, the number of equations having the
specified point as its singular point is the same (by translation), so
it suffices to count equations for which $P=(0,0)$ is singular.  Now
$P$ lies on the curve if and only if $a_6=0$, and then $P$ is singular
if and only if $a_3=a_4=0$, so there are $q^2$ equations for which $P$
is singular, and $q^4$ singular equations in all.
\end{proof}

Recall from Section~\ref{sec:localI} the notation
\[
\W(v_1,v_2,v_3,v_4,v_6) = \{\a\in\W(\Z_p)\mid \ordp(a_i)\ge
v_i\ \text{for~$i=1,2,3,4,6$}\}.
\]

\begin{prop}\label{prop:good-density}
The density of Weierstrass equations over~$\Z_p$ which have good
reduction is
\[
  \mu(\W(0,0,0,0,0\mid\ordp(\Delta)=0)) = 1-1/p.
\]
\end{prop}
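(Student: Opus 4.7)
The plan is to reduce the claim to counting over $\F_p$ and then invoke Lemma~\ref{lem:sing-count} directly. The key observation is that the condition $\ordp(\Delta(\a))=0$, which defines $\W(0,0,0,0,0\mid\ordp(\Delta)=0)$, depends only on $\a\pmod p$: since $\Delta$ is a polynomial in $\Z[a_1,\dots,a_6]$, we have $\ordp(\Delta(\a))=0$ if and only if the reduction $\bar\a\in\W(\F_p)$ satisfies $\Delta(\bar\a)\ne 0$ in $\F_p$.

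First I would observe that for a Weierstrass equation over $\F_p$, the discriminant vanishes exactly when the associated cubic curve is singular; this is a standard fact about the long Weierstrass form (the unique point at infinity is always smooth, and singularity at an affine point is detected by the simultaneous vanishing of the partial derivatives, which classical formulas show is equivalent to $\Delta=0$). Hence
\[
\{\a\in\W(\Z_p)\mid \ordp(\Delta)=0\} \;=\; \{\a\in\W(\Z_p)\mid \bar\a \text{ is non-singular over } \F_p\}.
\]
By Proposition~\ref{prop:modMdensity} (or equivalently the defining property of the normalised $p$-adic measure on $\W(\Z_p)=\Z_p^5$), the measure of this set equals $\#\{\bar\a\in\W(\F_p):\bar\a\text{ non-singular}\}/p^5$.

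Then I would apply Lemma~\ref{lem:sing-count} with $q=p$: it tells us that among the $p^5$ Weierstrass equations over $\F_p$, exactly $p^4$ are singular, so $p^5-p^4=p^4(p-1)$ are non-singular. Dividing by $p^5$ gives
\[
\mu(\W(0,0,0,0,0\mid\ordp(\Delta)=0)) = \frac{p^4(p-1)}{p^5} = 1-\frac{1}{p},
\]
as required.

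There is no serious obstacle here: the only mildly nontrivial input is Lemma~\ref{lem:sing-count}, which has already been established, and the standard identification of the smoothness locus with the locus where $\Delta\ne 0$. The short displayed computation is the entire proof.
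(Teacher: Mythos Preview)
Your proof is correct and is exactly the argument the paper has in mind: the paper's proof reads simply ``Immediate from Lemma~\ref{lem:sing-count}'', and you have spelled out precisely the reduction-mod-$p$ step and the count that make this immediate.
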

\begin{proof}
Immediate from Lemma~\ref{lem:sing-count}.
\end{proof}

For the bad reduction types the distributions of $f_p$ and
$c_p$ are as follows.

\begin{thm}[Distribution of conductor exponents and Tamagawa numbers
    by reduction type] \label{thm:fp-cp-count} Within each bad
  reduction type, whose density is given by
  Proposition~\ref{prop:TAtable}, the relative densities of each
  possible conductor exponent and Tamagawa number are as follows.
  Where two possibilities are given for the Tamagawa number, the
  density is split equally between them.

\begin{enumerate}

\item Multiplicative reduction types, all~$p$:
\[
\begin{tabular}{cccccc}
\hline
Type && $f_p$ & $c_p$ & relative density & absolute density \\
\hline
$\I_m$ & each $m\ge1$ & $1$ &  & $(p-1)/p^m$ & $(p-1)^2/p^{m+2}$ \\
\hline
$\I_m$ & split & $1$ & $m$ & $1/2$ & $(p-1)/(2p^2)$ (total for all~$m$)\\
       & non-split, $m$ even & $1$ & $2$ & $1/(2(p+1))$ &
$(p-1)/(2p^2(p+1))$ (total for all even~$m$) \\
       & non-split, $m$ odd & $1$ & $1$ & $p/(2(p+1))$  &
$(p-1)/(2p(p+1))$ (total for all odd~$m$) \\
\hline
\end{tabular}
\]

\item Additive reduction types:

\begin{itemize}
\item[$p\ge5$:]
\[
\begin{tabular}{cccc}
\hline
Type & $f_p$ & $c_p$ & relative density \\
\hline
$\II$,$\II^*$ & $2$ & $1$ & $1$ \\
\hline
$\III$,$\III^*$ & $2$ & $2$ & $1$ \\
\hline
$\IV$, $\IV^*$ & $2$ & $1$ or $3$ & $1$ \\
\hline
$\I_0^*$ & $2$ & $1$ & $(p+1)/(3p)$ \\
        & $2$ & $2$ & $1/2$ \\
        & $2$ & $4$ & $(p-2)/(6p)$ \\
\hline
$\I_m^*$ & $2$ & $2$ or $4$ & $1$ \\
\hline
\end{tabular}
\]

\item[$p=3$:]
\[
\begin{tabular}{cccc}
\hline
Type & $f_p$ & $c_p$ & relative density \\
\hline
$\II$,$\II^*$ & $3$ & $1$ & $2/3$ \\
          & $4$ & $1$ &  $2/9$ \\
          & $5$ & $1$ &  $1/9$ \\
\hline
$\III$,$\III^*$ & $2$ & $2$ & $1$ \\
\hline
$\IV$,$\IV^*$ & $3$ & $1$ or $3$ & $2/3$ \\
          & $4$ & $1$ or $3$ & $2/9$ \\
          & $5$ & $1$ or $3$ & $1/9$ \\
\hline
$\I_0^*$ & $2$ & $1$ & $4/9$ \\
        & $2$ & $2$ & $1/2$ \\
        & $2$ & $4$ & $1/18$ \\
\hline
$\I_m^*$ & $2$ & $2$ or $4$ & $1$ \\
\hline
\end{tabular}
\]

\item[$p=2$:]
\[
\begin{tabular}{cccc}
\hline
Type & $f_p$ & $c_p$ & relative density \\
\hline
$\II$     & $4$ & $1$ & $1/2$ \\
          & $6$ & $1$ & $3/8$ \\
          & $7$ & $1$ & $1/8$ \\
\hline
$\II^*$   & $3$ & $1$ & $1/2$ \\
          & $4$ & $1$ & $1/4$ \\
          & $6$ & $1$ & $1/4$ \\
\hline
$\III$,$\III^*$ & $3$ & $2$ & $1/2$ \\
           & $5$ & $2$ & $1/4$ \\
           & $7$ & $2$ & $1/8$ \\
           & $8$ & $2$ & $1/8$ \\
\hline
$\IV$,$\IV^*$ & $2$ & $1$ or $3$ & $1$ \\
\hline
$\I_0^*$ & $4$ & $1$ or $2$ & $1/2$ \\
       & $5$ & $1$ or $2$ & $1/4$ \\
       & $6$ & $1$ or $2$ & $1/4$ \\
\hline
$\I_m^*$ & $3$ & $2$ or $4$ & $1/2$ \\
       & $4$ & $2$ or $4$ & $1/4$ \\
      & $5$ & $2$ or $4$ & $1/16$ \\
      & $6$ & $2$ or $4$ & $1/8$ \\
      & $7$ & $2$ or $4$ & $1/16$ \\
\hline
\end{tabular}
\]
\end{itemize}
\end{enumerate}
\end{thm}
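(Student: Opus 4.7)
The plan is to follow Tate's algorithm systematically for each reduction type $T$, starting from the base set $\mathcal{B}_T$ determined in Proposition~\ref{prop:TAtable}, and to identify which additional $p$-adic data of $\a$ controls the pair $(f_p, c_p)$. The relative densities within type $T$ then come from the measures of these sub-configurations, normalised by $\rho_T^M$. For the multiplicative types $\I_m$, Ogg's formula gives $f_p=1$, so only the split/non-split dichotomy and the value of $m$ remain. The split/non-split decision is governed by whether the tangent cone $Y^2+a_1XY-a_2X^2$ at the singular point factors over $\F_p$; a direct count (treating $p$ odd and $p=2$ separately) shows that the two outcomes each occur with conditional probability $1/2$. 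Within non-split $\I_m$, an analysis compatible with the recursion $\rho_{\I_m}^M=p\cdot\rho_{\I_{m+1}}^M$ from Proposition~\ref{prop:typesIm} shows that $m$ is even versus odd in the ratio $1:p$, yielding the weights $1/(p+1)$ and $p/(p+1)$ for the Tamagawa values $2$ and $1$.

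For the additive types with $p\ge5$, Ogg's formula has no wild part, so $f_p=2$ always and the task reduces to determining $c_p$. Types $\II,\II^*,\III,\III^*$ have $c_p$ determined by the Kodaira symbol alone. For $\IV$ and $\IV^*$, $c_p\in\{1,3\}$ according to whether the quadratic tested at the corresponding step of Tate's algorithm splits over $\F_p$, each with conditional probability $1/2$. For $\I_0^*$, the relevant object is the reduction modulo $p$ of the monic cubic $x^3+a_2x^2+a_4x+a_6$ (after scaling coefficients by appropriate powers of $p$) appearing in the exit test, and $c_p\in\{1,2,4\}$ according as this separable cubic is irreducible, splits as a linear factor times an irreducible quadratic, or splits completely over $\F_p$. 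Counting monic separable cubics of each splitting type over $\F_p$ gives the relative densities $(p+1)/(3p)$, $1/2$, and $(p-2)/(6p)$ respectively. For $\I_m^*$ with $m\ge1$, the alternating monic quadratic tests in the tail of Tate's algorithm (first for membership in $\I_m^*$, then for the continuation to $\I_{m+1}^*$) give the two permitted Tamagawa values with equal conditional density.

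For $p=2$ and $p=3$ the analysis is considerably more delicate because wild ramification can contribute a term $\delta_p\ge1$ to $f_p$, so knowing the Kodaira type and the number of components no longer suffices. The plan is to follow Tate's algorithm with enough $p$-adic precision on $\a$ to pin down $\ordp(\Delta)$ exactly on each branch, and hence $f_p$ via Ogg's formula. Each intermediate branch is either a splitting test modulo $p$ (whose outcomes have a prescribed relative density) or a required translation stabilised by a known subgroup of $\T$, so the bookkeeping is mechanical but lengthy; the observed densities $189:366:122:61$ for $p=3$ and the analogous weights for $p=2$ then follow by combining, over all branches, the product of the branch probabilities with the valuations of the discriminant they force. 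The main obstacle is the combinatorial size of the case analysis at $p=2$, where $\Delta$ has $26$ monomials in the $a_i$ and distinct wild contributions must be distinguished using coefficients modulo $2^k$ for various $k$ considerably larger than $6$. To manage this reliably, we rely on the computer-algebra verification provided by the \Sage{} code at~\cite{sagecode}, which carries out each sub-case mechanically and confirms the stated densities.
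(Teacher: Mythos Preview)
Your proposal is correct and follows essentially the same approach as the paper: step through Tate's Algorithm from each base set~$\B_T$, use Ogg's formula to reduce the computation of~$f_p$ to determining $\ordp(\Delta)$ on each branch, count factorisation types of the relevant quadratics and cubics over~$\F_p$ to obtain the Tamagawa numbers (via Lemmas~\ref{lem:quad-count} and~\ref{lem:cubic-count}), and defer the lengthy $p=2,3$ case analysis to the \Sage\ verification in~\cite{sagecode}. The paper's proof carries out exactly this programme in full detail, writing out the congruences for~$\Delta$ modulo successive powers of~$p$ at each step rather than appealing to Ogg's formula by name.
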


In the proof we use the following elementary counting lemmas; the
second is Lemma~3 in~\cite{cubic-densities}.

\begin{lem}\label{lem:quad-count}
Let $q$ be a prime power.  Of the $q^2$ monic quadratics
$f\in\F_q[X]$,
\begin{itemize}
\item $q$ have a double root;
\item $q(q-1)/2$ have distinct roots in~$\F_q$;
\item $q(q-1)/2$ have conjugate roots in~$\F_{q^2}$.
\end{itemize}
\end{lem}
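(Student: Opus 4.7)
The plan is to prove Lemma \ref{lem:quad-count} by direct enumeration, using the factorization of monic quadratics in~$\F_q[X]$ and its algebraic closure. A monic quadratic~$f\in\F_q[X]$ is parametrised by the pair of coefficients $(b,c)\in\F_q^2$, giving $q^2$ polynomials in total; I will partition this set into the three claimed classes.

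First, the polynomials with a double root are precisely those of the form $(X-r)^2$ for $r\in\F_q$. The map $r\mapsto (X-r)^2$ is a bijection from~$\F_q$ to this subset, so there are exactly~$q$ such polynomials. Next, the polynomials that split over~$\F_q$ with distinct roots are those of the form $(X-r)(X-s)$ with $r\neq s$ in~$\F_q$; since the unordered pair $\{r,s\}$ determines~$f$ uniquely, the count is $\binom{q}{2}=q(q-1)/2$.

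For the third class, I would proceed by subtraction: the remaining polynomials number $q^2-q-q(q-1)/2=q(q-1)/2$, and they are exactly those that are irreducible over~$\F_q$, equivalently, those whose roots lie in~$\F_{q^2}\setminus\F_q$ and are Galois conjugates under~$x\mapsto x^q$. As a sanity check, this can also be counted directly: there are $q^2-q$ elements $\alpha\in\F_{q^2}\setminus\F_q$, each giving the minimal polynomial $(X-\alpha)(X-\alpha^q)$, and each such polynomial arises from exactly~$2$ values of~$\alpha$, yielding $(q^2-q)/2=q(q-1)/2$.

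The lemma is entirely elementary and there is no real obstacle; the only care needed is to note that a monic quadratic cannot have more than two roots, so the three classes are genuinely disjoint and exhaustive.
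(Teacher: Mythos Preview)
Your proof is correct and complete. The paper does not actually give a proof of this lemma: it simply states it as an ``elementary counting lemma'' alongside Lemma~\ref{lem:cubic-count}, so there is no comparison of approaches to be made; your direct enumeration (counting squares, split pairs, and irreducibles by subtraction with a consistency check via $\F_{q^2}\setminus\F_q$) is exactly the kind of argument the authors had in mind when omitting the proof.
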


\begin{lem}\label{lem:cubic-count}
Let $q$ be a prime power.  Of the $q^3$ monic cubics
$g\in\F_q[X]$,
\begin{itemize}
\item $q^2$ have a multiple root, of which
\begin{itemize}
\item[$\cdot$] $q$ have a triple root (necessarily in $\F_q$);
\item[$\cdot$] $q(q-1)$ have a double root and a single root (both in $\F_q$);
\end{itemize}
\item $q^3-q^2$ have distinct roots, of which
\begin{itemize}
\item[$\cdot$] $q(q-1)(q-2)/6$ have distinct roots in~$\F_q$;
\item[$\cdot$] $q^2(q-1)/2$ have one root in~$\F_q$ and two conjugate roots
  in~$\F_{q^2}$;
\item[$\cdot$] $q(q^2-1)/3$ have conjugate roots in~$\F_{q^3}$.
\end{itemize}
\end{itemize}
\end{lem}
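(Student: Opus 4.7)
The plan is to parametrise each monic cubic $g \in \F_q[X]$ by its unordered multiset of roots in $\overline{\F_q}$, count the cubics in each splitting type directly, and verify that the subtotals match the claimed values and add to $q^3$.

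For cubics with a repeated root, Galois invariance forces the repeated root to be fixed by Frobenius (else its conjugate would be another repeated root, which is impossible for a cubic), so it lies in $\F_q$; the cofactor is then a monic linear polynomial in $\F_q[X]$ as well. Hence such a cubic has the form $(X-r)^3$ or $(X-r)^2(X-s)$ with $r, s \in \F_q$ and $r \neq s$. The first family has $q$ elements, the second $q(q-1)$; they are disjoint, giving $q^2$ cubics in total with a repeated root.

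For cubics with three distinct roots, the Frobenius $\phi : x \mapsto x^q$ permutes the roots, and its cycle type as an element of $S_3$ divides the count into three subcases. First, three roots in $\F_q$ correspond to unordered $3$-subsets of $\F_q$, giving $\binom{q}{3} = q(q-1)(q-2)/6$ cubics. Second, a cubic with one root in $\F_q$ and a conjugate pair in $\F_{q^2}\setminus\F_q$ is specified by choosing the rational root (in $q$ ways) together with a size-$2$ Frobenius orbit in $\F_{q^2}\setminus\F_q$; every element of $\F_{q^2}\setminus\F_q$ has orbit size exactly $2$, so there are $(q^2-q)/2 = q(q-1)/2$ such pairs, giving $q \cdot q(q-1)/2 = q^2(q-1)/2$ cubics. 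Third, cubics irreducible over $\F_q$ correspond to size-$3$ Frobenius orbits on $\F_{q^3}$; since $3$ is prime, $\F_{q^3}$ admits no intermediate subfield strictly containing $\F_q$, so every element of $\F_{q^3}\setminus\F_q$ has full orbit size $3$, yielding $(q^3-q)/3 = q(q^2-1)/3$ orbits.

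Finally I would confirm the arithmetic by combining the three ``distinct roots'' subtotals:
\[
q(q-1)\left[\frac{q-2}{6} + \frac{q}{2} + \frac{q+1}{3}\right] = q(q-1)\cdot q = q^3 - q^2,
\]
which, together with the $q^2$ cubics having a repeated root, sums to $q^3$ as required. I expect no real obstacle here; the only subtle step is the observation that every Frobenius orbit on $\F_{q^3}\setminus\F_q$ has size exactly $3$, which rests on the primality of the extension degree.
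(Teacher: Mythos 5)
Your proof is correct and complete: the reduction to Galois-stable multisets of roots, the observation that a repeated root of a cubic must be Frobenius-fixed, and the orbit counts for the three separable splitting types are all sound, and the arithmetic check confirms the totals. The paper itself gives no proof of this lemma (it cites it as Lemma~3 of an earlier paper of Bhargava, Cremona, and Fisher), and your argument is the standard one that any such proof would use, so there is nothing of substance to compare.
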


\subsection{Proof of Theorem~\ref{thm:fp-cp-count}}

During the course of the proof, we will fill in details which were
only sketched in the proof of Proposition~\ref{prop:TAtable}.

We follow the steps of Tate's Algorithm.  Recall from
Section~\ref{sec:localI} the notation $\T=\T(\Z_p)=\{\tau(r,s,t)\mid
r,s,t\in \Z_p\}$.  We also set $n=\ordp(\Delta)$.

Initially there are no conditions except integrality of the
coefficients, so we start in $\W(0,0,0,0,0)$.  At each step, we
{either} exit the algorithm based on a divisibility test; {or}, we
divide into subcases.  The exit criteria always occur with probability
$1/p$.  The division into subcases is always into~$p$ subcases, except
at the beginning where there are $p^2$ subcases, one for each
possibility for the singular point mod~$p$.  The subcases occur with
equal probabilities, and the relative densities within each subcase
are independent of the specific subcase: for example, when there is
bad reduction, each of the $p^2$ points in the affine $\F_p$-plane is
equally likely to be the unique singular point, and the densities of
each bad reduction type do not depend on which point is singular.

\subsubsection*{Good reduction} The exit condition is
$n=0$: then $f_p=0$ and $c_p=1$.  This occurs with probability
$1-1/p$, by Proposition~\ref{prop:good-density}.  Otherwise (with
probability $1/p$), we divide into $p^2$ equiprobable subcases,
proceeding with the case where the point $(0,0)$ is singular
(modulo~$p$).  So now $\a\in\W(0,0,1,1,1)$.

Since $\Delta$ is invariant under the whole translation group~$\T$,
the exit condition is well-defined.  We claim that the stabiliser
of $\W(0,0,1,1,1)$ in~$\T$ is $\TT101$. In one direction this is
obvious from the transformation formulas for~$\tau(r,s,t)$, which for convenience we
recall here:
\begin{align*}
  a_1'-a_1 &= R_1 = 2s\\
  a_2'-a_2 &= R_2 = -sa_1+3r-s^2\\
  a_3'-a_3 &= R_3 = ra_1+2t\\
  a_4'-a_4 &= R_4 = -sa_3+2ra_2-(t+rs)a_1+3r^2-2st\\
  a_6'-a_6 &= R_6 = ra_4+r^2a_2+r^3-ta_3-t^2-rta_1.
\end{align*}
If $p$ divides all of $r,t,a_3, a_4, a_6$, then it divides $a_3',
a_4', a_6'$ also.  Conversely, suppose that $\tau(r,s,t)$ preserves
$\W(0,0,1,1,1)$.  Then $R_3\equiv R_4\equiv R_6\equiv0$, and
  \[
  r^3 \equiv (rs-t)R_3 + rR_4 -2R_6 \pmod{p}
  \]
implies $r\equiv0$; then $-t^2\equiv R_6\equiv0$ implies $t\equiv0$.

\subsubsection*{Multiplicative reduction} Given
$\a\in\W(0,0,1,1,1)$, the exit condition $v(b_2)=0$ is that
$f=y^2+a_1y-a_2$ has distinct roots modulo~$p$.  By
Lemma~\ref{lem:quad-count}, this occurs with probability~$1-1/p$, so
\begin{equation} \label{eqn:B_I1}
\mu(\W(0,0,1,1,1\mid\ordp(b_2)=0) = \frac{p-1}{p}\mu(\W(0,0,1,1,1)=(p-1)/p^4.
\end{equation}
Note that this condition is invariant under $\TT101$, since $b_2' =
b_2+12r\equiv b_2\pmod{p}$.  In this case, $f_p=1$ and the type is
$\I_m$ where $m=n$ (${}=\ordp(\Delta)$), while the value of~$c_p$
depends on the parity of~$m$ and on whether the reduction type is
split or non-split, which in turn depends on whether or not the roots
of~$f$ lie in $\F_p$.

In the split case, $c_p=m$, with density $\frac{1}{2}(p-1)^2/p^{m+2}$,
for each~$m\ge1$.  Relative to the total density of Type~$\I_{\ge1}$, this
is $(p-1)/2p^m$.

In the non-split case, $c_p=1$ for odd~$m$, with total density
$\frac{1}{2}\sum_{k=0}^{\infty}(p-1)^2/p^{2k+3}=(p-1)/{2p(p+1)}$,
while $c_p=2$ for even~$m$, with total density
$\frac{1}{2}\sum_{k=1}^{\infty}(p-1)^2/p^{2k+2}=(p-1)/{2p^2(p+1)}$.
Relative to the total density of Type~$\I_{\ge1}$, these are $p/2(p+1)$ and
$1/2(p+1)$ respectively.

Otherwise, $\ordp(b_2)\ge1$ and we move on to the types of additive
reduction; after another transformation taking the double root of
$f\pmod{p}$ to~$0$, we have $\a\in\W(1,1,1,1,1)$.  This translation
has the form $\tau(0,s,0)\in\TT101$ with $s$ unique modulo~$p$, so
the stabiliser of $\W(1,1,1,1,1)$ is cut down from $\TT101$ to
$\TT111$.

For $\a\in\W(v_1,v_2,v_3,v_4,v_6)$ we follow Tate's notation
in~\cite{tate1975algorithm} and write $a_{i,v_i}=p^{-v_i}a_i$. In the
course of the proof, there are many claims of the form
$\Delta\equiv*\pmod{p^k}$, where the right-hand side is in
$\Z[a_1,a_2,a_3,a_4,a_6]$, and the claim is made under the assumption
that $p^{v_i}\mid a_i$ for $1\le i\le6$; such claims can all be
verified by expanding the difference of both sides and checking that
every term has valuation at least~$k$.  In all cases, the coefficients
of every term are not divisible by any primes other than~$2$ and~$3$,
which explains why these primes often need separate treatment.  While
it would be possible to give a simpler proof for $p\ge5$ only, in term
of the invariants~$c_4$ and~$c_6$, we will treat all primes in as
uniform a way as possible, for clarity.  All these claims may be
checked using the \Sage\ code at \cite{sagecode}.

\subsubsection*{Additive reduction, Type~$\II$} Given
$\a\in\W(1,1,1,1,1)$, the exit condition for Type~$\II$ is
$\ordp(a_6)=1$.  This is well-defined since for $\a\in\W(1,1,1,1,1)$
and $\tau\in\TT111$ we have $\ordp(a_6'-a_6)\ge2$.

In this case we have $c_p=1$ and $f_p=n$.  Given $\a\in\W(1,1,1,1,1)$,
we find that
\(
\Delta \equiv -2^43^3a_6^2 \pmod{p^3};
\)
so $n\ge2$, and when the exit condition holds (so that $\ordp(a_6)=1$), we
have $n=2$, provided that $p\ge5$.

For $p=3$, we have
\(
\Delta \equiv -a_4^3 \pmod{3^4},
\)
so $n\ge3$, with $n=3\iff\ordp(a_4)=1$, which has
relative probability~$2/3$.  Otherwise, $\a\in\W(1,1,1,2,\veq1)$, with
\(
\Delta \equiv -3a_2^2a_6 \pmod{3^5},
\)
so $n\ge4$, with $n=4\iff \ordp(a_2)=1$, since $\ordp(a_6)=1$;
this case happens with relative probability $(1/3)(2/3)=2/9$.
Otherwise, $\a\in\W(1,2,1,2,\veq1)$, with
\(
\Delta \equiv -3^3a_6^2 \pmod{3^6},
\)
so $n=5$ with the remaining relative probability~$1/9$.

For $p=2$, we have
\(
\Delta\equiv a_3^4 \pmod{2^5},
\)
so $n\ge4$, and $n=4\iff \ordp(a_3)=1$, which has
relative probability~$1/2$.  Otherwise, $\a\in\W(1,1,2,2,\veq1)$, with
\(
\Delta \equiv a_1^4a_4^2-2^4a_6^2 \equiv a_1^4a_4^2-2^6 \pmod{2^7},
\)
so $n\ge6$, with $n=6\iff \ordp(a_1^4a_4^2)=6$; this case happens when
either $\ordp(a_1)\ge2$ or $\ordp(a_4)\ge3$, so with
relative probability~$(1/2)(3/4)=3/8$.  Assuming that both $\ordp(a_1)=1$
and~$\ordp(a_4)=2$, we find that
\(
\Delta \equiv 2^7 \pmod{2^8},
\)
so $n=7$ with the remaining relative probability~$1/8$.

Otherwise, we have $\a\in\W(1,1,1,1,2)$, with unchanged stabiliser
$\TT111$.

\subsubsection*{Additive reduction, Type~$\III$} Given
$\a\in\W(1,1,1,1,2)$, the exit condition for Type~$\III$ is
$\ordp(a_4)=1$.  This is well-defined since for $\a\in\W(1,1,1,1,2)$
and $\tau\in\TT111$ we have $v(a_4'-a_4)\ge2$.

In this case we have $c_p=2$ and $f_p=n-1$.  Now we have
\(
\Delta \equiv -2^6a_4^3 \pmod{p^4},
\)
so $n=3$ and $f_p=2$ for $p\ge3$, since $\ordp(a_4)=1$.

For $p=2$ and $\a\in\W(1,1,1,\veq1,2)$, we have
\(
\Delta \equiv a_3^4 \pmod{2^5},
\)
so $n\ge4$, with $n=4\iff \ordp(a_3)=1$, which happens
with relative probability~$1/2$.  Otherwise, $\a\in\W(1,1,2,\veq1,2)$,
and we have
\(
\Delta \equiv 2^2a_1^4 \pmod{2^7},
\)
so $n\ge6$, with $n=6\iff\ordp(a_1)=1$; this case has relative
probability~$(1/2)(1/2)=1/4$.  Otherwise, $\a\in\W(2,1,2,\veq1,2)$,
and we have
\(
\Delta \equiv 2^8 (a_{2,1}^2 + a_{3,2}^4 + a_{6,2}^2) \pmod{2^9},
\)
so $n\ge8$, with $n=8\iff2\nmid a_{2,1}+a_{3,2}+a_{6,2}$; this case has
relative probability~$(1/4)(1/2)=1/8$.  Finally, assuming that
$a_6\equiv2a_2+a_3\pmod{8}$ (so that $a_6'\equiv a_2'+a_3'\pmod2$), we find that
\(
\Delta\equiv2^9\pmod{2^{10}},
\)
so $n=9$ with the remaining relative probability~$1/8$.

The relative probabilities for $n=4,6,8$, and~$9$ (respectively,
$f_2=3,5,7$, and~$8$) are therefore $1/2,1/4,1/8$, and~$1/8$.

Otherwise, we have $\a\in\W(1,1,1,2,2)$, with unchanged stabiliser
$\TT111$.

\subsubsection*{Additive reduction, Type~$\IV$} Given
$\a\in\W(1,1,1,2,2)$, the exit condition for Type~$\IV$ is that the
quadratic~$f=y^2+a_{3,1}y-a_{6,2}$ has distinct roots modulo~$p$, or
equivalently that $\ordp(b_6)=2$. This condition is well-defined,
since for $\tau\in\TT111$ we have $\ordp(b_6'-b_6)\ge3$.  Using
Lemma~\ref{lem:quad-count} again, we have
\begin{equation}\label{eqn:B_IV}
\mu(\W(1,1,1,2,2\mid\ordp(b_6)=2)) = \frac{p-1}{p}\mu(\W(1,1,1,2,2)) =
(p-1)/p^8.
\end{equation}
Now $f_p=n-2$, and $c_p=1$ or~$3$, according to whether the roots of
$f$ are in~$\F_p$ or not, which have relative probability $1/2$ each;
it remains to determine the possible values of the discriminant
valuation~$n$ and their relative densities.

For $\a\in\W(1,1,1,2,2\mid\ordp(b_6)=2)$, we have
\(
\Delta\equiv-3^3b_6^2\pmod{p^5},
\)
so for $p\not=3$ we have $n=4$ and $f_p=2$.

For $p=3$, we have
\(
\Delta \equiv -a_2^3b_6 \pmod{3^6},
\)
so $n\ge5$, with $n=5\iff \ordp(a_2)=1$.  Otherwise,
$\a\in\W(1,2,1,2,2\mid\ordp(b_6)=2)$, and we have
\(
\Delta\equiv b_4^3\pmod{3^7}.
\)
Note that $b_4=a_1a_3+2a_4$, so $\ordp(b_4)\ge2$.  Hence $n\ge6$, and
$n=6\iff\ordp(b_4)=2\iff a_4\not\equiv a_1a_3\pmod{3^3}$. Assuming
that $a_4\equiv a_1a_3\pmod{3^3}$, so that $\ordp(b_4)\ge3$, we find that
\(
\Delta\equiv -3^3b_6^2\pmod{3^8},
\)
so $n=7$.  Thus for $p=3$, we have $n=5, 6$, or~$7$ and $f_3=3, 4$
or~$5$ with relative probabilities $2/3$, $2/9$, and~$1/9$
respectively.

Otherwise, $\ordp(b_6)\ge3$, so the quadratic $y^2+a_{3,1}y-a_{6,2}$ has a
repeated root.  A transformation~$\tau$ in a unique coset of $\TT112$
in~$\TT111$ takes the root to~$0$ and hence the coefficients into
$\W(1,1,2,2,3)$, with stabiliser $\TT112$.

\subsubsection*{Additive reduction, Type~$\I_0^*$} Given
$\a\in\W(1,1,2,2,3)$, the exit condition for Type~$\I_0^*$ is
$\ordp(\disc(g))=6$, where $g=x^3+a_2x^2+a_4x+a_6$.  Equivalently, the
condition is that $g_1(x) = g(px)/p^3 = x^3+a_{2,1}x^2+a_{4,2}x+a_{6,3}$ should
have distinct roots in $\overline{\F}_p$, since
$\ordp(\disc(g_1))=\ordp(\disc(g))-6$.  Note that after transforming
the equation by $\tau_{r,s,t}\in\TT112$, $g_1(x)$ becomes
$g_1(x+r/p)$, so the condition is well-defined.

Now, $c_p$ is equal to one more than the number of roots of $g_1$ in
$\F_p$.  By Lemma~\ref{lem:cubic-count}, this number is $0$, $1$
or~$3$ with relative probabilities $(p+1)/(3p)$, $1/2$ and
$(p-2)/(6p)$ respectively.  We have
\(
  \Delta\equiv16\disc(g) \pmod{p^7},
\)
so for $p\not=2$, the exit condition implies $n=6$, and then
$f_p=n-4=2$.

Now let $p=2$.  For $\a\in\W(1,1,2,2,3)$, we have
\(
\disc(g)\equiv a_6^2-a_2^2a_4^2 \pmod{2^7},
\)
so the exit condition implies $a_6\not\equiv a_2a_4\pmod{2^7}$.
Assuming that $a_6\not\equiv a_2a_4 \pmod{2^7}$, we find that
\(
\Delta \equiv a_1^4a_4^2-a_3^4 \pmod{2^9},
\)
so $n\ge8$, with $n=8 \iff a_{3,2}\not\equiv a_{1,1}a_{4,2}\pmod2$.  Assuming
further that $a_{3,2}\equiv a_{1,1}a_{4,2}\pmod2$, we have
\(
\Delta\equiv2^8a_1\pmod{2^{10}},
\)
so $n\ge9$, with $n=9$ if and only if $\ordp(a_1)=1$.   When
$\ordp(a_1)\ge2$, then also $\ordp(a_3)\ge3$, and these imply that
\(
\Delta\equiv 2^{10}\pmod{2^{11}},
\)
giving $n=10$.

The preceding analysis shows that for type $\I_0^*$ curves when $p=2$
we have $n=\ordp(\Delta)=8$, $9$, or~$10$, and respectively $f_2=4$,
$5$, or~$6$, with relative probabilities $1/2$, $1/4$, and~$1/4$.

\subsubsection*{Additive reduction, Type~$\I_m^*$, $m\ge1$}
The exit condition for type~$\I_0^*$ fails when $g_1(x)$ has a
repeated root modulo~$p$.  We can move this root to zero using a
transform in a unique coset of~$\TT212$ in $\TT112$, after which
$\a\in\W(1,1,2,3,4)$, with stabiliser now $\TT212$.  The condition
for type $\I_m^*$ is that the repeated root is only a double root,
which (after the transform) is that $\ordp(a_2)=1$.

Looking at the details of Tate's algorithm in this case, it proceeds
in a sequence of substeps: at each substep the value of~$m$ is
incremented; there is an exit condition that a monic quadratic mod~$p$
has distinct roots; and the value of~$c_p$ depends on whether this
quadratic has roots in~$\F_p$ (in which case $c_p=4$) or not
($c_p=2$).  So, overall, each of these two values occurs in half the
cases, by Lemma~\ref{lem:quad-count}.  Moreover, the stabiliser index
increases by a factor of~$p$ at each stage, since when the quadratic
has a double root we can move it to~$0$ with a transform in a uniquely
determined coset of an index~$p$ subgroup of the current stabiliser.

We have $f_p=n-m-4$.  We treat separately the cases $p\ge3$, where we
will see that $f_p=2$ always, and $p=2$.  The case $p\ge3$ is
well-known (see Kraus~\cite{Kraus-additive} or
Kobayashi~\cite{Kobayashi}), but we include the details here since the
analysis is similar to that required for~$p=2$.

Write
\begin{align*}
  \Wodd(k)&=\W(1,\veq1,k+1,k+2,2k+2),\\
  \Weven(k)&=\W(1,\veq1,k+2,k+2,2k+3),
\end{align*}
so initially, $\a\in\W(1,\veq1,2,3,4) = \Wodd(1)$.  The exit
conditions are:
\begin{itemize}
  \item for $\a\in\Wodd(k)$: that $y^2+a_{3,k+1}y-a_{6,2k+2}$ has distinct roots
    over~$\F_p$, or equivalently that
    $\ordp(b_6)=2k+2$, and for $p=2$ to $\ordp(a_3)=k+1$;
  \item for $\a\in\Weven(k)$: that $x^2+a_{4,k+2}x+a_{6,2k+3}$ have distinct
    roots over~$\F_p$, equivalently that $\ordp(b_8)=2k+4$, or that
    $\ordp(a_4)=k+2$ when $p=2$.
\end{itemize}

First assume that $p\not=2$.  For $\a\in\Wodd(k)$ we have
\(
\Delta \equiv -2^4a_2^3b_6 \pmod{p^{2k+6}};
\)
since $\ordp(a_2)=1$, when the exit condition $v(b_6)=2k+2$ holds,
for $p\not=2$ we have $n=2k+5$ exactly.  Hence $n=m+6$ and $f_p=2$.
Otherwise, after shifting the double root to~$0$ by a suitable
translation, we arrive in~$\Weven(k)$, where
\(
\Delta \equiv -2^4b_8a_2^2 \pmod{p^{2k+7}};
\)
when the exit condition $\ordp(b_8)=2k+4$ holds, we have
$v(2^4b_8a_2^2) = 2k+6$, so $n=2k+6$. Again, $n=m+6$ and~$f_p=2$.
Otherwise, after another shift we arrive in $\Wodd(k+1)$, so we
increment $k$ and repeat.

Hence for $p\ge3$, we always have $f_p=2$.

Now let $p=2$.  Again, the value of~$m$ is initialized to~$1$ and we
proceed recursively; at each stage we either exit (always with
relative probability~$1/2$), or increment~$m$.  The recursive steps
alternate in nature depending on the parity of~$m$; after the first
three cases ($m=1, 2, 3$) which are slightly different, all the
remaining cases may be dealt with generically.

At first, $m=1$ with $\a\in\Wodd(1)=\W(1,\veq1,2,3,4)$, where we have
\(
\Delta \equiv a_3^4 \pmod{2^9}.
\)
When the exit condition $\ordp(a_3)=2$ holds, we have $n=8$ and
$f_2=3$, and Type~$\I_1^*$.

Otherwise, $\ordp(a_3)\ge3$, and we shift~$y$ so that the quadratic
$y^2+a_{3,2}^2y-a_{6,4}$ has double root at $y\equiv0\pmod{2}$, so
that~$\a\in\Weven(1)=\W(1,\veq1,3,3,5)$, and we increment~$m$ to~$2$.

Now we have
\(
\Delta \equiv a_1^4a_4^2\pmod{2^{11}},
\)
so $n\ge10$.  When the exit condition $\ordp(a_4)=3$ holds, either
$\ordp(a_1)=1$, giving $n=10$ and $f_2=4$; or $\ordp(a_1)\ge2$, and
$\a\in\W(2,\veq1,3,\veq3,5)$.  In the latter case,
\(
\Delta \equiv a_3^4 + 2^{12} \pmod{2^{14}},
\)
so $n=12$ if $\ordp(a_3)\ge4$, and $n=13$ if $\ordp(a_3)=3$. Hence for
Type~$\I_2^*$ we have $n=10, 12$, or~$13$ (respectively, $f_2=4, 6$,
or~$7$) with relative probabilities~$1/2,1/4$, and~$1/4$.

Otherwise, when the exit condition at~$m=2$ fails, we have
$\ordp(a_4)\ge4$, and we shift~$x$ so that the quadratic
$x^2+a_{4,3}x+a_{6,5}$ has double root at $x\equiv0\pmod{2}$, so that
$\a\in\Wodd(2) = \W(1,\veq1,3,4,6)$ and we increment~$m$ to~$3$.

Now we have
\(
\Delta\equiv2 a_1^4a_3^2\pmod{2^{12}},
\)
so $n\ge11$.  When the exit condition $\ordp(a_3)=3$ holds, either
$\ordp(a_1)=1$, giving $n=11$ and $f_2=4$; or $\ordp(a_1)\ge2$, and
$\a\in\W(2,\veq1,\veq3,4,6)$. In the latter case,
\(
\Delta\equiv2^{12}\pmod{2^{13}},
\)
so $n=12$ and $f_2=5$.  Hence for Type~$\I_3^*$, we have $n=11$ or~$12$
(respectively, $f_2=4$ or~$5$) with equal probability.

Now let $m=2k\ge4$, with $\a\in\Weven(k)$, and exit condition
$\ordp(a_4)=k+2$.  Then
\(
\Delta \equiv a_1^4a_4^2 \pmod{2^{2k+9}},
\)
so $n\ge \ordp(a_1^4a_4^2)=2k+8$. Assuming that the exit condition $\ordp(a_4)=k+2$
holds, we have $n=2k+8=m+8$ and $f_2=4$,
provided that $\ordp(a_1)=1$.  Otherwise, $\ordp(a_1)\ge2$, and now
\(
\Delta\equiv 2^{2k+10}\pmod{2^{2k+11}},
\)
so $n=2k+10=m+10$ and $f_2=6$.  Thus for $m=2k\ge4$ we have $f_2=4$
or~$f_2=6$, with equal probability.

If the exit condition fails, $\ordp(a_4)\ge k+3$, and we may shift~$x$
so that the quadratic $x^2+a_{4,k+2}x+a_{6,2k+3}$ has its double root at
$x\equiv0\pmod{2}$, so also $\ordp(a_6)\ge 2k+4$ and
$\a\in\Wodd(k+1)$.  Incrementing both~$k$ and~$m$ so that $m=2k-1$, we
have~$\a\in\Wodd(k)$.

Next, $m=2k-1\ge5$, with $\a\in\Wodd(k)$ and exit condition
$\ordp(a_3)=k+1$.  Now,
\(
\Delta\equiv 2 a_1^4a_3^2 \pmod{2^{2k+8}},
\)
so $n\ge \ordp(2a_1^4a_3^2) = 2k+7$.  Assuming that the exit condition $\ordp(a_3)=k+1$
holds, we have $n=2k+7=m+8$ and $f_2=4$,
provided that $\ordp(a_1)=1$.  Otherwise, $\ordp(a_1)\ge2$, and now
\(
\Delta\equiv 2^{2k+9}\pmod{2^{2k+10}},
\)
so $n=2k+9=m+10$ and $f_2=6$. Thus for~$m=2k-1\ge5$, we again have
$f_2=4$ or~$f_2=6$ with equal probability.

If the exit condition fails, $\ordp(a_3)\ge k+2$, and we may shift~$y$
so that the quadratic $y^2+a_{3,k+1}y-a_{6,2k+2}$ has double root at $y\equiv0$,
so that~$\a\in\Weven(k)$, and we increment~$m$ to~$2k$ and recurse.

Taking all Types~$\I_m^*$ for $m\ge1$ together, we find that
$f_2=3,4,5,6$, or~$7$ with relative probabilities $1/2,1/4,1/16,1/8$,
and~$1/16$.

This completes the analysis of type $\I_m^*$.

\subsubsection*{Additive reduction, Type~$\IV^*$}
The exit condition for type~$\I_m^*$ fails when the cubic $g(x)$
has a triple root; after the transform moving the root to~$0$, this
means that $\ordp(a_2)\ge2$, so $\a\in\W(1,2,2,3,4)$, with the same
stabiliser as for $\W(1,\veq1,2,3,4)$, namely $\TT212$.

The exit condition for type~$\IV^*$ is that $f=y^2+a_{3,2}y-a_{6,4}$ has
distinct roots modulo~$p$, or equivalently $\ordp(b_6)=4$, which
happens with probability~$1-1/p$.  Thus
\begin{equation}\label{eqn:B_IVs}
\mu(\W(1,2,2,3,4\mid\ordp(b_6)=4)) = \frac{p-1}{p}\mu(\W(1,2,2,3,4)) =
(p-1)/p^{13}.
\end{equation}
Now, $c_p=1$ or $c_p=3$, depending on whether the roots are in~$\F_p$
or not, and these have equal probability by
Lemma~\ref{lem:quad-count}.

To compute $f_p=n-6$, we first note that for $\a\in\W(1,2,2,3,4)$ we
have
\(
\Delta \equiv -3^3b_6^2 \pmod{3^9};
\)
when $\ordp(b_6)=4$, this implies that $n=8$ and
$f_p=2$ provided that $p\not=3$.

Now consider $p=3$.  We have $\ordp(b_2)\ge2$, $\ordp(b_4)\ge3$, and
\(
\Delta \equiv b_4^3 \pmod{3^{10}},
\)
so $n\ge9$, and $n=9\iff \ordp(b_4)=3$, which is equivalent to
$a_{4,3}\not\equiv a_{1,1}a_{3,2}\pmod3$, so has relative
probability~$2/3$. Assuming that $a_{4,3}\equiv a_{1,1}a_{3,2}\pmod{3}$, we find that
\(
\Delta\equiv-3^4b_2b_6\pmod{3^{10}}.
\)
Hence $n\ge10$, with $n=10\iff v(b_2)=2 \iff a_{1,1}^2+a_{2,2}\not\equiv0\pmod3$.
Assuming further that $a_2'\equiv-a_1'^2\pmod3$,  we have
\(
\Delta \equiv -3^3b_6^2\pmod{3^{12}},
\)
so $n=11$ exactly.  Hence for $p=3$ we have $n=9, 10$, or~$11$
(respectively, $f_3=3,4$, or~$5$) with relative
probabilities~$2/3,2/9$, and~$1/9$.

\subsubsection*{Additive reduction, Type~$\III^*$}
When the exit condition for type~$\IV^*$ fails, we move the root of
the quadratic to~$0$ using a transform in a unique coset of~$\TT213$
in $\TT212$ to arrive in~$\W(1,2,3,3,5)$ with stabiliser~$\TT213$.

The exit condition for type~$\III^*$ is $\ordp(a_4)=3$.  In all cases
we have $c_p=2$.  To compute $f_p=n-7$, we first note that for
$\a\in\W(1,2,3,3,5)$ we have
\(
\Delta\equiv-2^6a_4^3\pmod{p^{10}};
\)
when $\ordp(a_4)=3$, this implies that $n=9$ and~$f_p=2$, provided
that $p\not=2$.

Let $p=2$. For $\a\in\W(1,2,3,3,5)$ we now have
\(
\Delta\equiv a_1^4a_4^2 \pmod{2^{11}},
\)
so $n\ge 10$, and when the exit condition $\ordp(a_4)=3$ holds,
we have $n=10 \iff \ordp(a_1)=1$.
Assuming that $\ordp(a_1)\ge2$, so $\a\in\W(2,2,3,3,5)$, we have
\(
\Delta\equiv a_3^4 \pmod{2^{13}},
\)
so $n\ge12$, with $n=12 \iff \ordp(a_3)=3$.
Assuming further that $\ordp(a_3)\ge4$, so $\a\in\W(2,2,4,3,5)$, we have
\(
\Delta \equiv 2^{4}(2^2a_1^4+2^6a_2^2+a_6^2) \equiv 2^{14}(a_{1,2}^4+a_{2,2}^2+a_{6,5}^2) \pmod{2^{15}},
\)
so $n\ge14$, with $n=14\iff a_{6,5}\not\equiv a_{1,2}+a_{2,2}\pmod2$.
Assuming that $a_{6,5}\equiv a_{1,2}+a_{2,2}\pmod2$, we find that
\(
\Delta \equiv 2^{15}\pmod{2^{16}},
\)
so that $n=15$.

Hence for $p=2$ we have $n=10, 12, 14$, or~$15$ (respectively, $f_2=3,
5, 7$, or~$8$) with relative probability~$1/2, 1/4, 1/8$ and~$1/8$.

\subsubsection*{Additive reduction, Type~$\II^*$}
When the exit condition for type~$\III^*$ fails we are
in~$\W(1,2,3,4,5)$ with the same stabiliser~$\TT213$, since
$\TT213$ preserves the condition $\ordp(a_3)=3$.

The exit condition for type~$\II^*$ is $\ordp(a_6)=5$.  In all cases
we have $c_p=1$, and $f_p=n-8$.

For $\a\in\W(1,2,3,4,5)$, we have
\(
\Delta\equiv-2^43^3 a_6^2 \pmod{p^{11}},
\)
so when the exit condition holds we have $n=10$ and $f=2$ for
all~$p\ge5$.

Let $p=3$.  Now, $\ordp(b_2)\ge2$, and
\(
\Delta\equiv -a_6b_2^3 \pmod{3^{12}};
\)
hence $n\ge11$, with $n=11\iff \ordp(b_2)=2 \iff a_{2,2}\not\equiv -a_{1,1}^2\pmod3$.
Assuming that $a_{2,2}\equiv-a_{1,1}^2\pmod3$, we find that
\(
\Delta\equiv b_4^3\pmod{3^{13}},
\)
so $n\ge12$, with $n=12\iff v(b_4)=3 \iff a_{4,4}\not\equiv
a_{1,1}a_{3,3}\pmod3$.
Assuming further that $a_{4,4}\equiv a_{1,1}a_{3,3}\pmod3$, we find that
\(
\Delta\equiv-3^3a_6\pmod{3^{14}},
\)
so $n=13$.  Hence for $p=3$ we have $n=11, 12$, or~$13$ (respectively,
$f_3=3, 4$, or~$5$) with relative probabilities~$2/3,2/9$, and~$1/9$.

Finally, let $p=2$. Now
\(
\Delta\equiv a_1^6a_6 \pmod{2^{12}},
\)
so $n\ge11$, with $n=11 \iff \ordp(a_1)=1$. If
$\ordp(a_1)\ge2$, then
\(
\Delta \equiv a_3^4 \pmod{2^{13}},
\)
so $n\ge12$, with $n=12 \iff\ordp(a_3)=3$.  If also
$\ordp(a_3)\ge4$, then
\(
\Delta \equiv 2^{14} \pmod{2^{15}},
\)
so $n=14$. Hence for $p=2$ we have $n=11, 12$, or~$14$ (respectively,
$f_2=3, 4$, or~$6$) with relative probability~$1/2, 1/4$ and~$1/4$.

When the exit condition for type~$\II^*$ fails we are
in~$\W(1,2,3,4,6)$ with the same stabiliser~$\TT213$.

This completes the proof of Theorem~\ref{thm:fp-cp-count}.
\qed

\subsection{Distribution of conductor exponents}

Finally, we collect together the possible conductor exponents over all
reduction types, to find the overall density of each.  Here we omit
non-minimal models, so the densities add up to $1-1/p^{10}$.

\begin{thm}[Overall distribution of conductor exponents]\label{thm:fp-all}
The overall densities of conductor exponents~$f_p$ for minimal
Weierstrass models over $\Z_p$ are as follows:
\begin{enumerate}
\item Good and multiplicative reduction
\[
\begin{tabular}{cc}
\hline
$f_p$ & density \\
\hline
$0$ & $1 - 1/p$ \\
$1$ & $1/p - 1/p^2$ \\
\hline
\end{tabular}
\]

\item Additive reduction.

\begin{itemize}
\item[$p\ge5$.]
\[
\begin{tabular}{cc}
\hline
$f_p$ & density \\
\hline
$2$ & $1/p^2 - 1/p^{10}$ \\
\hline
\end{tabular}
\]

\item[$p=3$.] The following densities add up to $59040/3^{12}=1/3^2-1/3^{10}$:
\[
\begin{tabular}{cr}
\hline
$f_p$ & density \\
\hline
$2$ & $15120/3^{12}$ \\
$3$ & $29280/3^{12}$ \\
$4$ & $9760/3^{12}$ \\
$5$ & $4880/3^{12}$ \\
\hline
\end{tabular}
\]

\item[$p=2$.] The following densities add up to $1020/2^{12}=1/2^2-1/2^{10}$:
\[
\begin{tabular}{cr}
\hline
$f_p$ & density \\
\hline
$2$ & $144/2^{12}$ \\
$3$ & $150/2^{12}$ \\
$4$ & $297/2^{12}$ \\
$5$ & $84/2^{12}$ \\
$6$ & $213/2^{12}$ \\
$7$ & $99/2^{12}$ \\
$8$ & $33/2^{12}$ \\
\hline
\end{tabular}
\]
\end{itemize}
\end{enumerate}
\end{thm}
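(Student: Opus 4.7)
The statement is a bookkeeping consequence of Proposition~\ref{prop:TAtable} (absolute densities $\rho_T^M$ of each Kodaira type~$T$) and Theorem~\ref{thm:fp-cp-count} (relative densities of each conductor exponent within each type). For each prime~$p$ and each conductor exponent~$f$, the density of minimal Weierstrass equations with $f_p=f$ equals the sum over all reduction types~$T$ of $\rho_T^M$ times the relative density of the value $f_p=f$ within~$T$. The plan is to carry this out case by case.

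The first two rows are immediate: for $f_p=0$ only type~$\I_0$ contributes, giving $(p-1)/p$; for $f_p=1$ only the multiplicative types~$\I_{\ge1}$ contribute, and from Proposition~\ref{prop:TAtable} their combined density (summed over $m\ge1$) is $(p-1)/p^2$. For $p\ge5$, Theorem~\ref{thm:fp-cp-count} shows that every additive type has $f_p=2$ unconditionally, so the density of $f_p=2$ equals the total additive density
\[
(p-1)\sum_{k=3}^{10}\frac{1}{p^k}=\frac{1}{p^2}-\frac{1}{p^{10}},
\]
obtained by summing the finite geometric series. A sanity check is that the three densities then add to $1-1/p^{10}$, as they must.

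For $p\in\{2,3\}$ the same procedure is followed, except that several additive types split non-trivially across conductor exponents. For each such type I would multiply each relative density in the appropriate $p$-table of Theorem~\ref{thm:fp-cp-count} by the corresponding $\rho_T^M$ from Proposition~\ref{prop:TAtable}, express everything over the common denominator~$p^{12}$, and then collect contributions for each fixed value of~$f$. For the $\I_m^*$ family one uses that the relative densities tabulated in Theorem~\ref{thm:fp-cp-count} are already aggregated across all $m\ge1$, so the total weight attached to them is $(p-1)/p^7$, and no further infinite summation is needed at this stage.

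The main obstacle is accurate arithmetic rather than conceptual, particularly for $p=2$ where seven distinct values of $f_2$ receive contributions from several types simultaneously. As a uniform cross-check, for each prime the column totals must equal $1-1/p^{10}$: namely $59040/3^{12}$ when $p=3$ and $1020/2^{12}$ when $p=2$. Matching these totals provides immediate consistency verification of the case-by-case bookkeeping, and the resulting entries are exactly those in the tables of the theorem.
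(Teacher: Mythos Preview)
Your proposal is correct and takes essentially the same approach as the paper, whose entire proof is the single sentence ``Immediate from Theorem~\ref{thm:fp-cp-count}.'' You have in fact supplied considerably more detail than the paper does, including the explicit geometric-series computation for $p\ge5$ and the consistency checks against the total additive density $1/p^2-1/p^{10}$.
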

\begin{proof}
Immediate from ~\ref{thm:fp-cp-count}.
\end{proof}

\bibliographystyle{amsplain}
\providecommand{\bysame}{\leavevmode\hbox to3em{\hrulefill}\thinspace}
\providecommand{\MR}{\relax\ifhmode\unskip\space\fi MR }
\providecommand{\MRhref}[2]{%
  \href{http://www.ams.org/mathscinet-getitem?mr=#1}{#2}
}
\providecommand{\href}[2]{#2}


\end{document}